\documentclass{amsart}

\usepackage{amsmath}
\usepackage{amssymb}
\usepackage{amsthm}
\usepackage[foot]{amsaddr}
\usepackage{dsfont}
\usepackage{hyperref}
\usepackage{enumerate}
\usepackage{enumitem}
\usepackage{multicol}

\hypersetup{colorlinks=true}
\theoremstyle{plain}
\newtheorem{proposition}{Proposition}
\newtheorem{theorem}{Theorem}
\newtheorem{lemma}{Lemma}
\newtheorem{corollary}{Corollary}
\newtheorem{claim}{Claim}
\newtheorem*{theorem*}{Theorem}

\theoremstyle{definition}
\newtheorem{definition}{Definition}
\newtheorem*{fact}{Fact}

\theoremstyle{remark}
\newtheorem{remark}{Remark}

\newtheorem{question}{Question}

\title{Approximate Lattices in Higher-Rank Semi-Simple Groups}
\author{Simon Machado}
\address{Institute for Advanced Study \\
              1 Einstein Drive \\
              Princeton 08540 \\
              New Jersey \\
              USA} 
 \email{machado@ias.edu}
\setcounter{tocdepth}{1}

\newcommand{\R}{\mathbb{R}}

\newcommand{\N}{\mathbb{N}}

\newcommand{\Q}{\mathbb{Q}}
\newcommand{\G}{\mathbb{G}}

\def\L{\Lambda}

\DeclareMathOperator{\GL}{GL}
\DeclareMathOperator{\SL}{SL}
\DeclareMathOperator{\id}{id}
\DeclareMathOperator{\Ad}{Ad}

\DeclareMathOperator{\Tr}{Tr}

\DeclareMathOperator{\PVS}{PVS}

\begin{document}
 \begin{abstract}
  We show that strong approximate lattices in higher-rank semi-simple algebraic groups are arithmetic.  
 \end{abstract}

 \maketitle
 \tableofcontents
 \section{Introduction}
 
 An $l$-approximate subgroup, in the sense of Tao \cite{MR2501249}, is a symmetric subset $X$ of a group $G$ that contains the identity and that is closed under multiplication up to an error contained in a finite subset $F$ of size at most $l$ i.e. such that the set of products $X^2 :=\{x_1x_2 |  x_1, x_2 \in X \}$ is contained in $FX:=\{fx | f \in F, x \in X\}$. The theory of expander graphs and of group growth \cite{MR3309986}, additive combinatorics \cite{MR2289012} and model theory \cite{MR2833482, van2013approximate} motivated the study of finite approximate subgroups,  eventually leading to the structure theorem proved in \cite{MR3090256} (see also, \cite{MR3237440}). More recently, investigations of certain types of infinite approximate subgroups of locally compact groups have gained attention, see e.g. \cite{MR3529116, MR3361775, MR3606726},  \cite{bjorklund2016approximate, MR3915195}, \cite{MR3438951, jing2021nonabelian}.

In particular, a very natural family of examples of infinite approximate groups arises already in the early work of Yves Meyer \cite{meyer1972algebraic} on harmonious sets and has since become a key concept in the theory of aperiodic order and mathematical quasi-crystals \cite{moody1997meyer, MR1340198, MR2253164}.  Aperiodic order is concerned with the study of Delone sets in Euclidean spaces, namely relatively dense and uniformly discrete subsets of $\mathbb{R}^n$.  A subset $X \subset \mathbb{R}^n$  is  \emph{uniformly discrete} if there is $r>0$ such that the distance between any two distinct points of $X$ is bounded below by $r$, and \emph{relatively dense} if there is $R \geq 0$ such that any point of $\mathbb{R}^n$ is within distance $R$ of some point of $X$. We note, in particular, that de Bruijn’s \cite{deBruijn1981algebraicI,deBruijn1981algebraicII} showed that the set of vertices of the Penrose rhombus tiling (P3) is a Delone set that is also an approximate subgroup of $\mathbb{R}^2$.

Meyer proved in \cite{meyer1972algebraic} that a Delone set $\Lambda \subset \mathbb{R}^n$ that is also an approximate subgroup must be commensurable to a \emph{cut-and-project set} (also known as \emph{model set}), i.e. a set of the form $\pi_{\mathbb{R}^n}\left( \Gamma \cap \left(\mathbb{R}^n \times W\right) \right)$, where $\Gamma$ is a lattice in $\mathbb{R}^n \times \mathbb{R}^m$, $\pi_{\mathbb{R}^n}: \mathbb{R}^n \times \mathbb{R}^m \rightarrow \mathbb{R}^n$ is the projection to the first factor and $W$ is a compact neighbourhood of the origin in $\mathbb{R}^m$. Here, two subsets $\Lambda_1, \Lambda_2$ of a group $G$ are said to be \emph{commensurable} if there is a finite subset $F$ of $G$ such that $\Lambda_i$ is contained in $F \Lambda_j \cap \Lambda_j F$ for $i \neq j \in \{1,2\}$.  Meyer also characterized such Delone sets in Fourier-theoretic terms as harmonious sets (see \cite{moody1997meyer, schreiber1973approximations, MR1400744}).  

There are many striking consequences of Meyer's theorem, see \cite{moody1997meyer}. One of them is a kind of sum-product phenomenon - proved ten years before the seminal paper by Erd\H{o}s and Szemer\'{e}di \cite{MR820223}. It asserts that:

  \begin{theorem*}[{\cite[Thm VI and X]{meyer1972algebraic}}]
  If $\Lambda\subset \mathbb{R}$ is a Delone approximate subgroup of $(\mathbb{R},+)$ and is stable under product ($\Lambda\Lambda \subset \Lambda$), then there is a number field $K$ such that $\Lambda$ is contained in and commensurable to the set $\PVS(K)$ of primitive \emph{Pisot--Vijayaraghavan--Salem numbers} of $K$.
  \end{theorem*}   
  
  \noindent The Pisot--Vijayaraghavan--Salem numbers are those algebraic integers $\alpha$ such that every Galois conjugate of $\alpha$ is either $\alpha$ or has modulus $\leq 1$. We denote by $\PVS(K)$ the set of those Pisot--Vijayaraghavan--Salem numbers in $K$ that are primitive i.e.  of degree $[K:\mathbb{Q}]$. It is easy to see that they form a Delone set of $\mathbb{R}$ for every real number field $K$. Furthermore, $\PVS(K) \cup \{0\}$ is an approximate subgroup of $\mathbb{R}$ (see Subsection \ref{Subsection: Definition and first properties first part}).

The sum-product phenomenon for finite fields \cite{MR2053599} was used in \cite{MR2415382} to establish a structure theorem for approximate subgroups of $\SL_2(\mathbb{F}_p)$. Similarly, Meyer's sum-product theorem above hints at the possibility of a structure theory for uniformly discrete approximate subgroups in non-commutative Lie groups. In this vein, it was observed by Bj\"{o}rklund and Hartnick \cite{bjorklund2016approximate} that the cut-and-project construction above generalises very naturally to arbitrary locally compact groups: if $\mathbb{R}^n$ and $\mathbb{R}^m$ are replaced by locally compact groups $G$ and $H$ respectively, $W$ is a compact symmetric neighbourhood of the identity in $H$, and $\Gamma$ is a lattice in the product $G \times H$, then the \emph{model set} $\Lambda:= \pi_G\left(\Gamma \cap \left( G \times W\right)\right)$  is a uniformly discrete approximate subgroup in $G$. 

Taking $\Gamma$ co-compact in the above construction forces $\Lambda$ to be Delone. More generally, the above construction makes sense if $\Gamma$ is not assumed co-compact, but only of finite co-volume. This has been investigated in \cite{bjorklund2016approximate} and it has been shown in \cite{bjorklund2016aperiodic} that for a model set $\Lambda$ defined as above with the help of a lattice $\Gamma$ (of finite co-volume, but not necessarily co-compact), the closed $G$-orbit

$$\Omega_\Lambda := \overline{ \left\{ g\Lambda, g \in G\right\}},$$

\noindent of $\Lambda$ in the Chabauty space of closed subsets of $G$ (see Section \ref{Section: Star-approximate lattices}), carries a finite G-invariant Borel measure $\mu_\Lambda$. The corresponding measure space $(\Omega_\Lambda, \mu_\Lambda)$ is called the invariant hull of $\Lambda$ in $G$. The existence of an invariant hull of finite volume for an arbitrary Delone set  $\Lambda$ in the classical setting of Euclidean spaces is a natural condition studied for instance in \cite{MR1992666, MR1798991, MR2308136}.

 Following \cite{bjorklund2016approximate} we will say that a uniformly discrete approximate subgroup of a locally compact group is a \emph{strong approximate lattice} if its invariant hull admits a finite $G$-invariant Borel measure. Note that a subgroup of $G$ is a strong approximate lattice if and only if it is a lattice in $G$. This naturally suggests the following:
 
 \begin{question}[Bj\"{o}rklund--Hartnick]
 Let $\Lambda$ be a strong approximate lattice in a locally compact group $G$. Is $\Lambda$ commensurable to a model set arising as above from a locally compact group $H$ and a lattice $\Gamma \subset G \times H$?
 \end{question}

We answer this question affirmatively for semi-simple algebraic groups whose simple factors have higher-rank. This complements previous work by the author that dealt with nilpotent Lie groups \cite{machado2020approximate}, soluble Lie groups \cite{machado2019infinite} and amenable locally compact groups \cite{machado2019goodmodels}. Note moreover that in the companion paper \cite[Thm 1.10]{machado2019goodmodels} we also prove that every strong approximate lattice in a connected Lie group can be decomposed into an amenable part and a semi-simple part, extending classical theorems of Bieberbach and Auslander in the setting of discrete subgroups.

 Our approach in this paper takes advantage of rigidity and arithmeticity results available in higher rank. Our method thus yields a result that is simultaneously reminiscent of Meyer's sum-product theorem and a generalisation of Margulis' arithmeticity theorem \cite{MR1090825}.

\begin{theorem}\label{First theorem}
Let $\Lambda$ be a strong approximate lattice in the $\mathbb{R}$-points $\mathbb{G}(\mathbb{R})$ of an absolutely almost simple algebraic group $\mathbb{G}$ defined over $\mathbb{R}$ with $\mathbb{R}$-rank $\geq 2$. There are a number field $K \subset \mathbb{R}$, a finite set of places $S$ of $K$, and an absolutely almost simple group $\mathbb{G}'$ defined over $K$ with $\mathbb{G}'(\mathbb{R})= \mathbb{G}(\mathbb{R})$, such that 
$$ \Lambda \text{ is commensurable to } \mathbb{G}'(\PVS(K)) \text{ and } \langle \Lambda \rangle  \text{ is commensurable to } \mathbb{G}'(\mathcal{O}_{K,S})$$
where $\mathcal{O}_{K,S}$ denotes the ring of $S$-integers of $K$ and $\langle\Lambda\rangle$ denotes the group generated by $\Lambda$. 
\end{theorem}

Recall that to define the subgroup $\mathbb{G}'(\mathcal{O}_{K,S})$ we implicitly choose a $K$-group embedding $\pi:\mathbb{G}'(K) \rightarrow \SL_n(K)$ for some integer $n \geq 1$ and we write $\mathbb{G}'(\mathcal{O}_{K,S})=\pi^{-1}(\SL_n(\mathcal{O}_{K,S}))$ \cite[I.3.1]{MR1090825}. Likewise, we define the approximate subgroup $\mathbb{G}'(\PVS(K))$ as the inverse image via $\pi$ of the following subset of matrices of $\SL_n(K)$: 
\begin{equation}
 \{ A \in \SL_n(K)| I-A \text{ and } I-A^{-1} \text{ have entries in } \PVS(K) \cup \{0\}\} . \label{Eq: Pisot}
 \end{equation}
Then $\mathbb{G}'(\PVS(K))$ is always strong approximate lattice (Lemma \ref{Lemma: Pisot implies strong}). Furthermore, replacing $\PVS(K)$ in (\ref{Eq: Pisot}) with any subset of $K$ commensurable to $\PVS(K)$ would simply yield another approximate subgroup commensurable with $\mathbb{G}'(\PVS(K))$. This shows, for instance, that we could very well consider a similar construction based on the Pisot--Vijayaraghavan numbers of $K$ - those algebraic integers whose Galois conjugates have absolute value $<1$ - rather than $\PVS(K)$ without changing the content of Theorem \ref{First theorem}.

Note moreover, that if $\Lambda$ is $l$-approximate and not commensurable to a lattice, then using the recent non-abelian Brunn--Minkowski inequality proved by Jing, Tran and Zhang \cite{jing2021nonabelian} we can show: 
 \begin{itemize}
 \item  $\dim \mathbb{G} \left(\vert \{ \text{ Arch. places $v$  s.t. } \mathbb{G}'(K_v) \text{ non-compact }\}\vert -1 \right) \leq 9\log^2(l);$
 \item  if, moreover, $\Lambda$ is not relatively dense, then $\dim  \mathbb{G} \left([K:\mathbb{Q}] -1 \right) \leq 18\log^2(l)$.
 \end{itemize}
In both formulae, $\log$ denotes the base $2$ logarithm.

We now state the main theorem of this paper. Theorem \ref{Main theorem} below is concerned with a larger class of subsets than the strong approximate lattices, the class of $\star$-approximate lattices defined in Section \ref{Section: Star-approximate lattices}. This includes sets of the form $X^{-1}X$, where $X$ is a subset of $G$ with a $G$-invariant hull of finite volume and such that $(X^{-1}X)^3$ is uniformly discrete. It turns out that under these assumptions alone, the set $X^{-1}X$ is an approximate subgroup: 

\begin{fact}[Corollary \ref{Corollary: Approximate subgroup condition is shallow}]
Let $X \in \mathcal{C}(G)$ be such that $(X^{-1}X)^3$ is uniformly discrete and $\Omega_X$ admits a finite $G$-invariant measure. Then $X^{-1}X$ is an approximate subgroup.
\end{fact}

We prove: 

\begin{theorem}\label{Main theorem}
 Consider a finite set $A$, characteristic $0$ local fields $(k_{\alpha})_{\alpha \in A}$ and almost $k_{\alpha}$-simple algebraic groups $(\mathbb{G}_{\alpha})_{\alpha \in A}$ defined over $k_{\alpha}$ of $k_{\alpha}$-rank $\geq 2$. Let $\Lambda$ be a $\star$-approximate lattice in $G_A:=\prod_{\alpha \in A}\mathbb{G}_{\alpha}(k_{\alpha})$. Then there are $B$ finite, characteristic $0$ local fields $(k_{\beta})_{\beta \in B}$,  absolutely almost simple algebraic groups $(\mathbb{H}_{\beta})_{\beta\in B }$ defined over $k_{\beta}$ and a lattice $\Gamma \subset G_A \times \prod_{\beta \in B}\mathbb{H}_{\beta}(k_{\beta}) $ whose intersection with $\prod_{\beta \in B}\mathbb{H}_{\beta}(k_{\beta}) $ is trivial and such that 
 $$\langle \Lambda \rangle = p_A(\Gamma) \text{ and } \Lambda \text{ is contained in and commensurable to } p_A\left(\Gamma \cap  G_A \times  \prod_{\beta \in B} U_{\beta}\right) $$
 where $p_A $ is the natural projection to $G_A$ and the $U_{\beta}$'s  are symmetric compact neighbourhoods of the identity in $\mathbb{H}_{\beta}(k_{\beta})$.
\end{theorem}

Theorem \ref{Main theorem} states that both $\Lambda$ and the group $\langle \Lambda \rangle$ it generates have an arithmetic origin. Combined with Margulis' arithmeticity theorem this yields the number theoretic information needed to show Theorem \ref{First theorem}. 

Note that the local fields $k_{\beta}$ may be Archimedean or non-Archimedean. However, when  $\Lambda$ is $l$-approximate, a result of Carolino \cite{MR3438951} in combination with Theorem \ref{Main theorem} ensures that one can find $\Lambda' \subset \Lambda^4$ such that $C:=C(l)$ translates of $\Lambda'$ cover $\Lambda$, both $\Lambda'$ and $\langle \Lambda' \rangle$ are commensurated by the elements of $\langle \Lambda \rangle$ and the conclusions of Theorem \ref{Main theorem} hold for $\Lambda'$ with all local fields $k_{\beta}$ Archimedean.

Note moreover that  all fields considered in this article will be of characteristic $0$. In fact, in positive characteristic every strong approximate lattice is a lattice, see \cite{Hrushovski2020quasimodels}. It also turns out that a theorem similar to Theorem \ref{Main theorem} holds as well in rank one, or when factors of rank one are allowed. This is shown independently by Hrushovski in \cite{Hrushovski2020quasimodels} using model-theoretic tools. Our ergodic method however requires the higher rank assumption on each factor.

Approximate lattices in $G_A$ provide examples of tilings in the symmetric space, for example by considering Voronoi cells around a $\Lambda$-orbit, which can sometimes be shown to be aperiodic. Although it is not directly related to our work, we note that in \cite{MR1452434} Mozes constructed a finite set of tiles with matching rules that can tile the symmetric space $G_A/K$ only aperiodically. His Theorem 2 also required the higher rank assumption (this time because of its reliance on quasi-isometric rigidity).

 Theorem \ref{Main theorem} is proved following Margulis' original strategy for arithmeticity: we prove first a superrigidity result and then a finite generation result (see \cite{MR1090825}). To derive these two results we use Zimmer's superrigidity for cocycles \cite{zimmer2013ergodic} (we use, in fact, a version due to Fisher and Margulis \cite{MR2039990} that does not require connectedness of the algebraic hull of the cocycle considered). As in the case of lattices, given a strong approximate lattice $\Lambda$ and a Borel section $s: \Omega_{\Lambda} \rightarrow G$  such that $s(X) \in X$ we define the natural cocycle $\alpha_s$ on $\Omega_{\Lambda}$ by $\alpha_s(g,X) := s(gX)^{-1} g s(X)$. Note that $\alpha(\cdot,X)$ takes values in $X^{-1}X$ and, hence, in $\Lambda^{-1}\Lambda \subset \langle \Lambda \rangle$. 
 
 A crucial ingredient in Zimmer's proof of Margulis' superrigidity is a result of Mackey \cite{MR44536} describing the structure of cocycles arising from a transitive action. This fails in our situation. Indeed, Mackey's proof crucially uses the existence of large stabilisers, whereas $\Omega_{\Lambda}$ typically has trivial stabilisers. Circumventing this difficulty is the central point of our work. Given a group homomorphism $T: \langle\Lambda\rangle \rightarrow H$ with target a topological group we first relate the range of the cocycle $T \circ \alpha_s$ to $T(\Lambda)$ as follows. 

\begin{theorem}\label{Range cocycle}
Let $\Lambda$, $G_A$ be as in Theorem \ref{Main theorem}. Let $T$ and $\alpha$ be as above. Suppose that $T \circ \alpha_s$ is cohomologous to a cocycle taking values in a closed subgroup $L \subset H$. Then there is $h \in H$ such that for all neighbourhoods of the identity $\mathcal{V} \subset H$ there is $\Lambda' \subset \Lambda^2$ commensurable to $\Lambda$ such that 
$$T(\Lambda') \subset \mathcal{V}hLh^{-1}\mathcal{V}^{-1}$$
\end{theorem}

Theorem \ref{Range cocycle} gives more than enough information when applied to group homomorphisms taking values in the group of isometries of a CAT(0) space (Propositions \ref{Proposition: Consequence two-sided error in CAT(0) space} and \ref{Proposition: Constant cocycles in semi-simple groups}). In particular, this puts us in a position to apply cocycle superrigidity (e.g. \cite{zimmer2013ergodic}) and obtain the following:

\begin{theorem}\label{Superrigidity}
Let $\Lambda$, $G_A$ be as in Theorem \ref{Main theorem}. Take a local field $k$ and a simple (centreless) algebraic group $\mathbb{L}$ defined over $k$. Let $T:\langle\Lambda\rangle \rightarrow \mathbb{L}(k)$ be a group homomorphism. Then one of the following is true: 
\begin{enumerate}
\item there is a continuous group homomorphism $\pi: G_A \rightarrow \mathbb{L}(k)$ extending $T$; 
\item $T(\Lambda)$ is bounded.
\end{enumerate}
\end{theorem}
\noindent This result directly extends Margulis' superrigidity from lattices to $\star$-approximate lattices. We prove in fact a special case first: Theorem \ref{Range cocycle} together with cocycle superrigidity yield the case when $\dim \mathbb{L} \leq \dim \mathbb{G}_{\alpha}$. We use it in combination with a finiteness statement (Proposition \ref{Proposition: Rigidity of compact approximate subgroups}) which is itself a consequence of one of the main results of our companion paper \cite[Thm. 4.1]{machado2019goodmodels}. This special case is central in the proof of  Theorem \ref{Main theorem}, which in turn yields the general case of Theorem \ref{Superrigidity}.

If we assume instead that $T$ is a unitary representation of $\langle\Lambda\rangle$ on a Hilbert space $\mathcal{H}$, we can introduce a unitary representation of $G$ on $L^2(\Omega_{\Lambda}, \mathcal{H})$ by
$$ \pi(g)( f)(X) := T\circ \alpha_s(g^{-1},X)^{-1}\left(f(g^{-1}X)\right),$$ 
for all $g$ and almost all $X$. Theorem \ref{Range cocycle} enables us to transfer information from the induced representation of $G$ to the representation of $\Lambda$: if $\pi$ fixes a unit vector, then there is a unit vector $x$ such that $T(\Lambda)(x)$ is totally bounded (Proposition \ref{Proposition: Heredity of property (T)}). 

We build upon this remark to define a notion of \emph{property (T) for approximate subgroups}:  an approximate subgroup $\Lambda$ has \emph{property (T)} if for any unitary representation $\pi$ of $\langle\Lambda\rangle$ almost with invariant vectors there is a sub-representation $\sigma$ with $\sigma(\Lambda)$ totally bounded in the strong topology (Definition \ref{Definition: Property T for approximate subgroups}). Property (T) for approximate subgroups implies the relative Kazhdan property of the pair $(\langle\Lambda\rangle, \Lambda)$ as defined by Cornulier \cite{MR2245534} and reduces to the usual property (T) when $\Lambda$ is a subgroup. Moreover, a $\star$-approximate lattice has property (T) if and only if the ambient group has property (T). This fact is key and leads to the following:

\begin{theorem}\label{Theorem: Approximate lattices in property T groups are finitely generated}
Let $\Lambda$ be a $\star$-approximate lattice in a locally compact group with property (T). Then $\langle\Lambda \rangle$ is finitely generated. 
\end{theorem}

\noindent Theorem \ref{Theorem: Approximate lattices in property T groups are finitely generated} is instrumental in the proof of Theorem \ref{Main theorem}. As in the proof of Margulis' arithmeticity \cite{MR1090825} and following an idea of Vinberg \cite{MR0279206}, we use it to prove that the Galois automorphisms of the field of traces of $\Ad \Lambda$ give rise to group homomorphisms between simple groups over local fields, to which we can apply the above-mentioned superrigidity result for target groups of dimension at most $\dim \mathbb{G}_{\alpha}$ (Theorem \ref{Superrigidity}).

 \section{$\star$-Approximate Lattices}\label{Section: Star-approximate lattices}
 \subsection{Definition and First Properties}\label{Subsection: Definition and first properties first part}
 For any two subsets $X$ and $Y$ of a group $G$ we define $XY:=\{xy |x \in X,y \in Y\}$ and $X^n := \{x_1\cdots x_n | x_1,\ldots,x_n \in X\}$ for all non-negative integers $n$. We write $\langle X \rangle$ the subgroup generated by $X$. We say that two subsets $X,Y$ of a group $G$ are \emph{commensurable} if there is a finite subset $F$ of $G$ such that $X\subset FY$ and $Y \subset FX$.  A subset $A$ of a group $G$ is a ($l$-)\emph{approximate subgroup} for some positive integer $l$ if (i) the identity element $e$ belongs to $A$, (ii) the subset $A$ satisfies $A=A^{-1}$ and (iii) there is a finite subset $F \subset G$ with $|F| \leq l$ such that $A^2\subset FA$. 
 
 A subset $X$ of a locally compact group $G$ is \emph{locally finite} if for all compact subsets $K \subset G$ we have $|X \cap K| < \infty$. The subset $X$ is said \emph{(left) uniformly discrete} if $e$ is isolated in $X^{-1}X$. Equivalently, $X$ is uniformly discrete if there exists a neighbourhood of the identity $V$ with $|gV \cap X| \leq 1$ for all $g \in G$.
 
 Equip now the space $\mathcal{C}(G)$ of closed subsets of $G$ with the \emph{Chabauty--Fell topology}. The Chabauty--Fell topology on the space $\mathcal{C}(G)$ of closed subsets of $G$ is defined as the topology generated by the subsets
 $$ U_K:=\{ C \in \mathcal{C}(G) | C \cap K =\emptyset \} \text{ and } U^V:=\{ C \in \mathcal{C}(G) | C \cap V \neq\emptyset \} $$
 for all $K \subset G$ compact and $V \subset G$ open. When $G$ is second countable, the Chabauty-Fell topology makes $\mathcal{C}(G)$ into a compact metrizable $G$-space with the obvious action $(g,X) \mapsto gX$. To any $X_0 \in \mathcal{C}(G)$ one can therefore associate a compact metrizable $G$-space $\Omega_{X_0}$ called the \emph{(left)-invariant hull} and defined as the closure of the orbit of $X_0$. A \emph{strong approximate lattice} is thus defined as a uniformly discrete approximate subgroup $\L$ of $G$ such that the $G$-space $\Omega_{\L}$ admits a $G$-invariant Borel probability measure that is  \emph{proper} i.e. $\nu(\{\emptyset\})=0$. See \cite{bjorklund2016approximate} for this and more.

 Note moreover that a sequence $(X_n)_{n \geq 0}$ of elements of $\mathcal{C}(G)$ converges to $X$ if and only if $(1)$ for all $x \in X$ there are $x_n \in X_n$ for all integers $n \geq 0$ with $x_n \rightarrow x$; $(2)$ for all increasing sequences of positive integers $(i_k)_{k \in \N}$ and any sequence $(x_{i_k})_{k \in \N}$ with $x_{i_k} \rightarrow x$, we have $x \in X$ (\cite{MR139135},\cite[Prop. 1.8]{MR2406240}).

 In what follows, we will often work with almost simple algebraic groups defined over a field $k$ and we refer to the preliminary section of \cite{MR1090825} (and references therein) for background. In this article we will always consider that almost simple algebraic groups are both connected and linear. When the word ``almost" is omitted, we mean that the group is centreless. Nevertheless, to avoid any ambiguity, we will sometimes add ``centreless".
 
 \subsubsection{PVS numbers}
 
 We will rephrase now the definition of PVS numbers using valuations following a treatment due to Meyer (see \cite[\S II.13.3]{meyer1972algebraic}). This will allow us to quickly show that the PVS numbers provide examples of strong approximate lattices. This point of view will become handy later on in the proof of Theorem \ref{First theorem}.
 
 Let $K \subset \mathbb{R}$ be a number field and let $v_0$ be the Archimedean place on $K$ inherited from the inclusion into $\mathbb{R}$. Let $\mathcal{O}_K$ denote the ring of algebraic integers of $K$ and $S_{\infty}$ denote the set of all Archimedean places of $K$. Write $S=S_{\infty}\setminus \{v_0\}$. For all $v \in S_{\infty}$, let $| \cdot |_v$ be an absolute value associated with $v$. Then the set of Pisot numbers $\PVS(K)$ adjoined with $0$ is equal to the set
 $$\{ \alpha \in \mathcal{O}_K | \forall v \in S, |\alpha|_v \leq 1\}.$$
 It would appear at first glance that the condition that $\alpha$ be primitive has been omitted here. However, when $\alpha \in \mathcal{O}_K$ is not primitive, there is always $v \in S$ such that $|\alpha|_v > 1$. In short, if $\alpha$ is not primitive, then there exists a non-identity field embedding $\iota:K \rightarrow \mathbb{C}$ such that $\iota(\alpha)=\alpha$. The Galois action on the set of valuations then provides $v \in S_{\infty}$ different from $v_0$ such that $|\alpha|_v=|\alpha|_{v_0}>1$.
 
 This point of view works equally well when considering sets of matrices. Define for any $v \in S$ the subset $U_v$ of $\SL_n(K_v)$ as 
 $$ \{ g \in \SL_n(K_v) | g -I, g^{-1} - I \text{ have entries in } O_v \}$$
 where $O_v$ is the unit ball of $| \cdot |_v$ in the completion $K_v$. Then for any $K$-subgroup $\G \subset \SL_n$, the subgroup $\G(\PVS(K))$ is simply $\G(\mathcal{O}_K) \cap \bigcap_{v \in S} p_v^{-1}(U_v)$ where $p_v: \SL_n(K) \rightarrow \SL_n(K_v)$ denotes the natural map. It was proved in \cite[\S II.13.3]{meyer1972algebraic} that the set $\PVS(K) \cup \{0\}$ is a Meyer set in $\mathbb{R}$. We prove here: 
 
 \begin{lemma}\label{Lemma: Pisot implies strong}
 Let $\G \subset \SL_n$ be a $K$-subgroup that is either reductive or unipotent. Then the subset $\G(\PVS(K))$ is a strong approximate lattice in $\G(\mathbb{R})$.
 \end{lemma}
 
 \begin{proof}
 By the Borel--Harish-Chandra theorem (see e.g. \cite[Thm I.3.2.7]{MR1090825}), the diagonal embedding of $\G(\mathcal{O}_K)$ into $\prod_{v \in S_{\infty}} \G(K_v)$ makes $\G(\mathcal{O}_K)$ into a lattice. Now, $\G(\PVS(K)) \subset \G(K_{v_0})=\G(\mathbb{R})$ is equal to 
 $$p_{v_0}\left(\G(\mathcal{O}_K) \cap \left( \G(K_{v_0}) \times \prod_{v\in S} U_{v} \cap \G(K_v)\right)\right).$$ Thus, we are in a situation to apply \cite[Thm. 1.2]{bjorklund2016aperiodic}, which implies that $\G(\PVS(K))$ is indeed a strong approximate lattice. 
 \end{proof}
 
 While we will be interested in the semi-simple case only, we have included unipotent groups in Lemma \ref{Lemma: Pisot implies strong} as it provides a proof that $\PVS(K) \cup \{0\}$ is a Meyer set. Indeed, this reduces to the case where $\G$ is the additive group $\mathbb{G}_a$. 
 
 \begin{remark}
 Note, finally, that the above definition of $\G(\PVS(K))$ depends on an embedding of $\G$ in some $\SL_n$. This is very similar to the definition of the subgroup $\G(\mathcal{O}_{K,S})$. It is well-known that, while it is not possible to provide an intrinsic definition of $\G(\mathcal{O}_{K,S})$ (i.e. independent of the chosen embedding) from the $K$-structure only, all definitions provide commensurable subgroups of $\G(K)$, see \cite[\S I.3.2]{MR1090825} for a more in-depth discussion and proofs. Following a similar path, one could show that any two definitions of $\G(\PVS(K))$ provide commensurable approximate subgroups. Since we will not use this fact, we do not provide details here. 
 \end{remark}

\subsubsection{Extended hull} We define now a generalisation of the invariant hull. 

 \begin{definition}\label{Definition: Extended invariant hull}
  Let $X_0$ be a closed subset of a locally compact second countable group $G$. The \emph{extended invariant hull} $\Omega_{X_0,G}^{ext}$ of $X_0$ in $G$ is the subset of $\mathcal{C}(G)$ defined by
  $$ \{ X \in \mathcal{C}(G) | X^{-1}X \subset \overline{X_0^{-1}X_0} \}.$$
  When the group $G$ considered is clear from context we will simply write $\Omega_{X_0}^{ext}$.
 \end{definition}
 
 The extended invariant hull is a natural generalisation of the invariant hull defined above:
 
 \begin{lemma}
  Let $X_0$ be a closed subset of a locally compact second countable group $G$. Then $\Omega_{X_0,G}^{ext}$ is a closed subset of $\mathcal{C}(G)$ stable under the $G$-action. Thus, the set $\Omega_{X_0,G}^{ext}$ is a metrizable compact continuous $G$-space. Moreover, the invariant hull $\Omega_{X_0}$ is contained in $\Omega_{X_0,G}^{ext}$. 
 \end{lemma}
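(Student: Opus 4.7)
The plan is to verify the four assertions in the natural order, leaning on the convergence characterization of the Chabauty-Fell topology recalled just above the lemma. The only mildly nontrivial step is the closedness of $\Omega_{X_0,G}^{ext}$; everything else is formal.

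First I would dispatch $G$-stability. For $X \in \Omega_{X_0,G}^{ext}$ and $g \in G$, the set $gX$ is still closed and $(gX)^{-1}(gX) = X^{-1}g^{-1}gX = X^{-1}X \subset \overline{X_0^{-1}X_0}$, so $gX \in \Omega_{X_0,G}^{ext}$.

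Next, the main step: closedness. Let $(X_n)_{n \geq 0}$ be a sequence in $\Omega_{X_0,G}^{ext}$ that converges to $X$ in the Chabauty-Fell topology. I need to show $X^{-1}X \subset \overline{X_0^{-1}X_0}$. Pick any $y \in X^{-1}X$, writing $y = x^{-1}x'$ with $x,x' \in X$. By property $(1)$ of the convergence characterization recalled above, there exist $x_n, x'_n \in X_n$ with $x_n \to x$ and $x'_n \to x'$. Since $X_n \in \Omega_{X_0,G}^{ext}$, we have $x_n^{-1}x'_n \in X_n^{-1}X_n \subset \overline{X_0^{-1}X_0}$, and by continuity of the group operations $x_n^{-1}x'_n \to x^{-1}x' = y$. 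Therefore $y$ lies in the closed set $\overline{X_0^{-1}X_0}$, which shows $X \in \Omega_{X_0,G}^{ext}$. So $\Omega_{X_0,G}^{ext}$ is closed in $\mathcal{C}(G)$.

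Because $G$ is second countable, $\mathcal{C}(G)$ equipped with the Chabauty-Fell topology is a compact metrizable $G$-space with continuous $G$-action; as a closed $G$-invariant subspace, $\Omega_{X_0,G}^{ext}$ inherits the same properties. Finally, to check that $\Omega_{X_0} \subset \Omega_{X_0,G}^{ext}$, observe that $X_0 \in \Omega_{X_0,G}^{ext}$ since $X_0^{-1}X_0 \subset \overline{X_0^{-1}X_0}$ trivially. By $G$-invariance the full orbit $\{gX_0 : g \in G\}$ lies in $\Omega_{X_0,G}^{ext}$, and by closedness its closure $\Omega_{X_0}$ does too. The only potential subtlety is the use of joint continuity of $(a,b) \mapsto a^{-1}b$ in the closedness argument, but this is immediate in any topological group and poses no real obstacle.
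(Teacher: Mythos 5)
Your proof is correct and follows essentially the same route as the paper: checking $G$-stability by the computation $(gX)^{-1}(gX)=X^{-1}X$, proving closedness via the sequential characterization of Chabauty--Fell convergence together with continuity of $(a,b)\mapsto a^{-1}b$, and deducing the last claim from $X_0\in\Omega_{X_0,G}^{ext}$ plus invariance and closedness. No issues.
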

 
 \begin{proof}
  Stability under the $G$-action is clear. Now, consider a sequence $(X_n)_{n\geq 0}$ of elements of $\Omega_{X_0,G}^{ext}$ that converges towards $X \in \mathcal{C}(G)$ and take $x, y \in X$. There are $x_n, y_n \in X_n$ for all integers $n \geq 0$ such that $x_n \rightarrow x$ and $y_n \rightarrow y$. Note that for all integers $n \geq 0$ we have $x_n^{-1}y_n \in \overline{X_0^{-1}X_0}$. So we have $x^{-1}y = \lim_{n \geq 0} x_n^{-1}y_n \in \overline{X_0^{-1}X_0}$. Hence, the closed subset $X$ belongs to $\Omega_{X_0,G}^{ext}$. Finally, we have the $X_0 \in \Omega_{X_0,G}^{ext}$ so $\Omega_{X_0} \subset \Omega_{X_0,G}^{ext}$.
 \end{proof}

  Questions of continuity and measurability of maps between hulls will be central. We record here a simple result that enables us to prove that some maps are Borel. 
  
   \begin{lemma}\label{Lemma: Criterion Borel measurability 2}
   Let $X$ be a topological space, let $G$ be a locally compact second countable group and let $\Phi : X \rightarrow \mathcal{C}(G)$ be a map. If $\Phi^{-1}(U_K)$ is a Borel subset for every compact subset $K \subset G$, then $\Phi$ is Borel measurable. 
  \end{lemma}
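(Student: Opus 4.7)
The plan is to reduce to Lemma \ref{Lemma: Criterion Borel measurability} by verifying that $\Phi^{-1}(U^V)$ is a Borel subset of $X$ for every open $V \subset G$. The essential input is the fact that in a locally compact second countable space, every open set is $\sigma$-compact.

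First, I would fix a countable basis $(W_n)_{n \geq 0}$ of $G$ consisting of relatively compact open neighbourhoods, which exists by local compactness and second countability. Given an arbitrary open $V \subset G$, local compactness guarantees that every $x \in V$ has an open neighbourhood with compact closure contained in $V$, so the subfamily $\mathcal{D}_V := \{ W_n : \overline{W_n} \subset V \}$ covers $V$. Setting $K_n := \overline{W_n}$ for $W_n \in \mathcal{D}_V$, this expresses $V$ as a countable union of compact subsets, say $V = \bigcup_n K_n$.

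The key observation is then that a closed set $Y$ meets $V$ if and only if it meets some $K_n$, so $U^V = \bigcup_n \{ Y \in \mathcal{C}(G) : Y \cap K_n \neq \emptyset \}$; and since meeting $K_n$ is exactly the negation of being disjoint from $K_n$, each of these sets equals $\mathcal{C}(G) \setminus U_{K_n}$. Consequently
$$\Phi^{-1}(U^V) = \bigcup_n \bigl( X \setminus \Phi^{-1}(U_{K_n}) \bigr),$$
which is Borel in $X$ by hypothesis. Applying Lemma \ref{Lemma: Criterion Borel measurability} with $\mathcal{V}$ the basis $(W_n)_{n \geq 0}$ (for which the requirement that every open set be a countable union of elements of $\mathcal{V}$ holds trivially) then yields that $\Phi$ is Borel measurable.

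There is no serious obstacle here: the only point to verify carefully is the $\sigma$-compactness of open subsets of $G$, for which one genuinely uses local compactness rather than merely second countability. Everything else is a formal manipulation of the defining sub-basis $\{U^V, U_K\}$ of the Chabauty--Fell topology, exploiting that $U^K$ and $U_K$ are complementary whenever $K$ is compact.
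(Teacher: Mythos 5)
Your argument is correct and is essentially the proof given in the paper: both write the open set $V$ as a countable union of compact subsets $K_n$ (using $\sigma$-compactness of open sets in a locally compact second countable group), observe that $U^V=\bigcup_n\bigl(\mathcal{C}(G)\setminus U_{K_n}\bigr)$, and then conclude via the argument of Lemma \ref{Lemma: Criterion Borel measurability}. The only cosmetic difference is that you produce the $K_n$ from a countable basis of relatively compact open sets, whereas the paper simply invokes $\sigma$-compactness directly.
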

  
  \begin{proof}
   Let $V \subset G$ be any open subset. Then $V$ is $\sigma$-compact since $G$ is second countable locally compact. So let $(K_n)_{n \geq 0}$ be any sequence of compact subsets such that $K_0 \subset K_1 \subset \ldots$ and $\bigcup_{n \geq 0} K_n =V$. Then one checks that $U^V = \bigcup_{n\geq 0} G\setminus U_{K_n}$ so $\Phi^{-1}(U^V)$ is Borel. We conclude that $\Phi$ is Borel.    
   \end{proof}
   
   \subsubsection{$\star$-Approximate lattices}  
  Given a closed subset $X_0$ of some locally compact group we want to study the ergodic theory of $\Omega_{X_0}^{ext}$. Note that not every invariant measure on $\Omega_{X_0}^{ext}$ is interesting however, as the measure $\delta_{\emptyset}$ is always a well defined $G$-invariant Borel probability measure on $\Omega_{X_0}^{ext}$. This observation leads to the following:

 \begin{definition}\label{Definition: Non-trivial measures}
  We say that a left-invariant Borel probability measure $\nu$ on $\Omega_{X_0,G}^{ext}$ is \emph{proper} if $\nu(\{\emptyset\})=0$.
 \end{definition}

 \begin{definition}\label{Definition: star-approximate lattice}
 Let $\L$ be a subset of a locally compact second countable group $G$. We say that $\L$ is a $\star$-\emph{approximate lattice} if:
 \begin{enumerate}
  \item$\L$ is an approximate subgroup;
  \item $\L$ is uniformly discrete;
  \item there is a proper $G$-invariant Borel probability measure $\nu$ on $\Omega_{\L,G}^{ext}$.
 \end{enumerate}
 \end{definition}

  We refer the interested reader to Subsubsection \ref{A remark concerning star-approximate lattices, strong approximate lattices and commensurability} for comments and motivations regarding Definition \ref{Definition: star-approximate lattice}.
 
  Note here again that condition (1) of Definition \ref{Definition: star-approximate lattice} is quite natural and follows from (3) and a slight strengthening of (2) according Corollary \ref{Corollary: Approximate subgroup condition is shallow} from Subsection \ref{Subsection: The Periodization Map}. In addition, proper $G$-invariant Borel probability measures on the extended invariant hull can be related to proper $G$-invariant Borel probability measures on the invariant hull (in the sense of \cite{bjorklund2016approximate}) of some subset.  

 \begin{proposition}\label{Proposition: Equivalent definitions of star-approximate lattices}
  Let $\L$ be a uniformly discrete approximate subgroup of a locally compact second countable group $G$. The following are equivalent: 
  \begin{enumerate}
   \item the approximate subgroup $\L$ is a $\star$-\emph{approximate lattice};
   \item there is $X_0 \subset \L^2$ with $X_0^{-1}X_0 \subset \L^2$ such that there is a proper $G$-invariant ergodic Borel probability measure $\nu$ on $\Omega_{X_0}$ the invariant hull of $X_0$ in $G$.
  \end{enumerate}
 \end{proposition}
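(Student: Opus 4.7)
The plan is to prove the two implications separately. The key background observation, used in both directions, is that because $\L$ is uniformly discrete with $\L = \L^{-1}$, the set $\L^2 = \L^{-1}\L$ is locally finite, hence closed in $G$, so that $\overline{\L^{-1}\L} = \L^2$.

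For the direction $(2) \Rightarrow (1)$, I would show that $\Omega_{X_0} \subset \Omega_{\L,G}^{ext}$ and then transport the measure along the inclusion. Since $X_0^{-1}X_0 \subset \L^2$ and $\L^2$ is closed, $\overline{X_0^{-1}X_0} \subset \L^2$. For any $X \in \Omega_{X_0}$, the Chabauty--Fell approximation argument already used in the lemma preceding the proposition gives $X^{-1}X \subset \overline{X_0^{-1}X_0} \subset \overline{\L^{-1}\L}$, so $X \in \Omega_{\L,G}^{ext}$. Since $\Omega_{X_0}$ is closed and $G$-invariant in $\mathcal{C}(G)$, pushing $\nu$ forward along the inclusion $\Omega_{X_0} \hookrightarrow \Omega_{\L,G}^{ext}$ yields a proper $G$-invariant Borel probability measure, and ergodicity is simply discarded (it is not required in (1)).

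For the converse $(1) \Rightarrow (2)$, start with a proper $G$-invariant $\nu$ on $\Omega_{\L,G}^{ext}$ and apply the ergodic decomposition $\nu = \int \nu_\omega \, d\mu(\omega)$. The identity $0 = \nu(\{\emptyset\}) = \int \nu_\omega(\{\emptyset\}) \, d\mu(\omega)$ forces $\mu$-almost every component $\nu_\omega$ to be proper, so fix one such ergodic proper measure and still call it $\nu$. For a continuous action of a locally compact second countable group on a compact metrizable space, the support of an ergodic probability measure equals the orbit closure of a generic point: apply ergodicity to the saturations $\bigcup_{g \in G} g^{-1} V$ of the elements $V$ of a countable basis of $\operatorname{supp}(\nu)$, each of which is open, $G$-invariant, and of positive measure, hence of full measure. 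Pick such a generic $X$ that is non-empty, any $g \in X$, and set $X_0 := g^{-1} X$; then $1 \in X_0$, so $X_0 \subset X_0^{-1} X_0 = X^{-1} X \subset \overline{\L^{-1}\L} = \L^2$, which verifies the inclusions required in (2). Since $\Omega_{X_0} = \overline{G X_0} = \overline{G X} \supset \operatorname{supp}(\nu)$, the measure $\nu$ restricts to a proper $G$-invariant ergodic Borel probability measure on $\Omega_{X_0}$.

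The only delicate point, and hence the main obstacle, is the genericity step used to produce a non-empty $X$ whose $G$-orbit is dense in $\operatorname{supp}(\nu)$; the remaining arguments amount to bookkeeping with the Chabauty--Fell topology. For countable $G$ this would be immediate from Birkhoff, while for the locally compact second countable case one relies on the continuity of the action to make the saturations open, so that ergodicity upgrades positive measure to full measure and a countable basis argument for the compact metrizable space $\operatorname{supp}(\nu)$ concludes.
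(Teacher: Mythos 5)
Your proposal is correct and follows essentially the same route as the paper: ergodic decomposition to get an ergodic proper component, identification of its support with the orbit closure of a generic non-empty $X$, translation by a point of $X$ to obtain $X_0$, and, for the converse, pushing the measure forward along the inclusion $\Omega_{X_0}\hookrightarrow\Omega_{\L,G}^{ext}$. You merely make explicit two steps the paper leaves implicit (the ergodic decomposition and the saturation argument producing a generic point), both of which are handled correctly.
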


 \begin{proof}
   Let us start by showing $(1) \Rightarrow (2)$. Since $\L$ is a $\star$-approximate lattice there exists a proper $G$-invariant ergodic Borel probability measure on the metrizable compact $G$-space $\Omega_{\L, G}^{ext}$. Let $S$ denote the support of $\nu$. Since $\nu$ is ergodic there is an element $X \in S$ whose orbit is dense in $S$. We thus have $S=\overline{G\cdot X}=\Omega_X$. But $X \neq \emptyset$ since $\nu$ is proper. So $\nu$ gives rise to a proper $G$-invariant Borel probability measure on $\Omega_X$. Define now $X_0:=x^{-1}X$ for some $x \in X$. We have $X_0=x^{-1}X \subset \L^2$, $X_0^{-1}X_0=X^{-1}X \subset \L^2$ and $\Omega_X = \Omega_{X_0}$ admits a proper $G$-invariant ergodic Borel probability measure.
   
   Conversely, the inclusion $i:\Omega_{X_0} \rightarrow \Omega_{\L, G}^{ext}$ is injective, continuous and satisfies $i^{-1}(\{\emptyset\}) \subset \{\emptyset\}$. So $i^*\nu$ is a proper $G$-invariant ergodic Borel probability measure on $\Omega_{\L, G}^{ext}$. 
 \end{proof}
 
 Proposition \ref{Proposition: Equivalent definitions of star-approximate lattices} will enable us to utilise tools developed for strong approximate lattices (from e.g. \cite{bjorklund2016approximate,bjorklund2016aperiodic,machado2019goodmodels}). As a first and immediate consequence of Proposition \ref{Proposition: Equivalent definitions of star-approximate lattices} (and \cite[Cor. 2.7]{bjorklund2019borel}) we have the implications:
  \begin{center}
   strong approximate lattice $\Rightarrow$ $\star$-approximate lattice $\Rightarrow$ approximate lattice in the sense of Bj\"{o}rklund--Hartnick \cite[Def. 4.12]{bjorklund2016approximate}. 
  \end{center}
 We moreover have a straightforward corollary of a result from \cite{bjorklund2016approximate}: 

 \begin{corollary}\label{Corollary: Star-approximate lattices are bi-syndetic}
  Let $\L$ be a $\star$-approximate lattice in a locally compact second countable group $G$ and $V \subset G$ be any neighbourhood of the identity. Then there is a compact subset $K \subset G$ such that $V \L K = G$. 
 \end{corollary}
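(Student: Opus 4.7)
The plan is to reduce the claim to the corresponding density statement for subsets whose invariant hull carries a proper invariant probability measure, and then absorb the extra copy of $\L$ into the compact set $K$ using the approximate subgroup condition.

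First, I would apply Proposition \ref{Proposition: Equivalent definitions of star-approximate lattices} to produce a subset $X_0 \subset \L^2$ with $X_0^{-1}X_0 \subset \L^2$ such that the invariant hull $\Omega_{X_0}$ admits a proper $G$-invariant Borel probability measure $\mu$ in the sense of \cite{bjorklund2016approximate}. The key input from \cite{bjorklund2016approximate} is a general fact about such subsets: if $X_0$ is uniformly discrete (which is automatic here since $X_0 \subset \L^2$ and $\L^2$ is uniformly discrete as $\L$ is a uniformly discrete approximate subgroup) and $\Omega_{X_0}$ carries a proper $G$-invariant probability measure, then $X_0$ is \emph{relatively dense}, meaning for every neighbourhood $V$ of the identity there is a compact subset $K_0 \subset G$ with $V X_0 K_0 = G$. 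The heuristic for this (which justifies calling it a ``straightforward corollary'') is that properness of $\mu$ forces the sets in $\Omega_{X_0}$ to be ``big'' uniformly, and uniform discreteness together with $G$-invariance of $\mu$ prevents the sets in $\Omega_{X_0}$ from having arbitrarily large gaps.

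Having $V X_0 K_0 = G$ with $X_0 \subset \L^2$ gives $V \L^2 K_0 = G$. To pass from $\L^2$ to $\L$, I invoke the approximate subgroup condition: there is a finite set $F \subset G$ with $\L^2 \subset F \L$, and then, taking inverses and using $\L^{-1} = \L$, we also get $\L^2 \subset \L F^{-1}$. Substituting this into $V \L^2 K_0 = G$ yields
\[
G = V \L^2 K_0 \subset V \L F^{-1} K_0 = V \L K,
\]
where $K := F^{-1} K_0$ is a finite union of translates of a compact set, hence compact. This finishes the argument.

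The only potentially nontrivial step is the general relative-density statement from \cite{bjorklund2016approximate}; everything else is bookkeeping with the approximate-subgroup axiom. If the cited form of the result is stated only for strong approximate lattices rather than for arbitrary uniformly discrete subsets whose hulls carry proper invariant measures, one would need to verify that the proof carries over verbatim---but since Proposition \ref{Proposition: Equivalent definitions of star-approximate lattices} already packages $\L$ through an auxiliary subset $X_0$ in the classical ``invariant hull'' framework of \cite{bjorklund2016approximate}, this compatibility should be essentially immediate.
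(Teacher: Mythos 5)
Your proof is correct and follows essentially the same route as the paper: the paper's one-line proof is precisely to apply the relative-density result of Bj\"orklund--Hartnick (their Cor.\ 4.22) to the subset $X_0$ supplied by Proposition \ref{Proposition: Equivalent definitions of star-approximate lattices}. The only difference is that you spell out the final absorption step $\L^2 \subset \L F^{-1}$, which the paper leaves implicit; that step is correct as written.
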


 \begin{proof}
  Apply \cite[Cor. 4.22]{bjorklund2016approximate} to the closed subset $X_0$ given by Proposition \ref{Proposition: Equivalent definitions of star-approximate lattices}. 
 \end{proof}

 The following additional consequence of Proposition \ref{Proposition: Equivalent definitions of star-approximate lattices} and results from \cite{machado2019goodmodels} will be key. 
 
 \begin{corollary}\label{Corollary: Intersection star-approximate lattice and closed subgroup}
  Let $\L$ be a $\star$-approximate lattice in a locally compact second countable group $G$. Let $H \subset G$ be a closed normal subgroup such that the projection $p_{G/H}(\L)$ of $\L$ to $G/H$ is locally finite. Then $p_{G/H}(\L)$ and $H \cap \L^2$ are $\star$-approximate lattices in $G/H$ and $H$ respectively. 
 \end{corollary}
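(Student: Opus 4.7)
My plan is to combine elementary group-theoretic checks with Proposition~\ref{Proposition: Equivalent definitions of star-approximate lattices}, which reduces the measure-theoretic content to pushing forward a proper ergodic $G$-invariant measure on a classical invariant hull to appropriate extended hulls in $G/H$ and in $H$.

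First, the algebraic part. The subset $p_{G/H}(\L)$ is a $K$-approximate subgroup of $G/H$ as the homomorphic image of one, and it is uniformly discrete because it is locally finite by hypothesis (for approximate subgroups, uniform discreteness and local finiteness are equivalent, as recalled at the start of Section~\ref{Section: Star-approximate lattices}). For $H\cap\L^2$, I would write $\L^4\subset F\L$ with $F$ finite and pick $h_f\in H\cap f\L$ whenever this intersection is non-empty; then for any $f\in F$ with $H\cap f\L\neq\emptyset$, writing $h_f=f\lambda_f$ one gets $f\L\subset h_f\L^2$, whence $(H\cap\L^2)^2\subset H\cap\L^4\subset\{h_f\}(H\cap\L^2)$ with $\{h_f\}\subset H$ finite. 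Discreteness of $H\cap\L^2$ in $H$ is inherited from that of $\L^2$ in $G$.

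For the projection case, I would invoke Proposition~\ref{Proposition: Equivalent definitions of star-approximate lattices} to fix $X_0\subset\L^2$ with $X_0^{-1}X_0\subset\L^2$ and a proper ergodic $G$-invariant probability $\nu$ on the invariant hull $\Omega_{X_0}$. Every $X\in\Omega_{X_0}$ then satisfies $X^{-1}X\subset\L^2$, so $p_{G/H}(X)^{-1}p_{G/H}(X)\subset p_{G/H}(\L)^2$, which is locally finite since $p_{G/H}(\L)$ is. A short argument using local finiteness then shows that $p_{G/H}(X)$ is itself closed and locally finite in $G/H$, so the assignment $\pi\colon X\mapsto p_{G/H}(X)$ defines a Borel $G$-equivariant map from $\Omega_{X_0}$ into $\Omega_{p_{G/H}(X_0),G/H}^{ext}$ with $\pi^{-1}(\{\emptyset\})=\{\emptyset\}$. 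Pushing $\nu$ forward therefore yields a proper $G/H$-invariant Borel probability measure, which through the natural inclusion $\Omega_{p_{G/H}(X_0),G/H}^{ext}\hookrightarrow\Omega_{p_{G/H}(\L),G/H}^{ext}$ witnesses that $p_{G/H}(\L)$ is a $\star$-approximate lattice.

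The intersection case is the main obstacle. The naive map $X\mapsto X\cap H$ is not $H$-equivariant, and a typical $X\in\Omega_{X_0}$ need not meet $H$ at all. My plan is to disintegrate $\nu$ along the projection $\pi$ of the previous step; each conditional measure $\nu_{\bar g}$ on the fibre $\pi^{-1}(\{\bar g\})$ is automatically invariant under $H\subset\mathrm{Stab}_G(\bar g)$. Choosing a Borel section $\bar g\mapsto g$ of $p_{G/H}$ and translating $\nu_{\bar g}$ by $g^{-1}$ moves each fibre into the space of closed subsets of $G$ that project to $H$ in $G/H$; restricting to $H$ produces a Borel assignment of $H$-invariant measures on $\Omega_{H\cap\L^2,H}^{ext}$, which integrated against $\pi_*\nu$ gives the sought $H$-invariant Borel probability. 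The delicate point, and what I expect to be the hardest step, is establishing properness of this measure: one must show that a $\pi_*\nu$-positive proportion of fibres contains, after translation, closed subsets of $G$ that genuinely meet $H$. I would deduce this from properness of $\nu$ combined with a covering argument in the spirit of Corollary~\ref{Corollary: Star-approximate lattices are bi-syndetic}, together with the uniform discreteness of $\L^2$ to rule out mass escaping to the empty set.
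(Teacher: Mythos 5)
Your treatment of $p_{G/H}(\L)$ is essentially the paper's: the paper pushes a proper invariant measure forward along $X\mapsto p_{G/H}(X)$ (checking Borel measurability via Lemma \ref{Lemma: Criterion Borel measurability}), exactly as you do after first passing through Proposition \ref{Proposition: Equivalent definitions of star-approximate lattices}; your elementary verifications that $p_{G/H}(\L)$ and $H\cap\L^2$ are uniformly discrete approximate subgroups (of $G/H$ and $H$ respectively) are also correct. The divergence is in the intersection case: the paper simply invokes the proof of \cite[Prop.~6.1]{machado2019goodmodels}, whereas you sketch a self-contained disintegration argument. Your architecture --- disintegrate $\nu$ over the fibres of $X\mapsto p_{G/H}(X)$, observe that each conditional measure is invariant under $H$ (which acts trivially on $G/H$, hence fixes each fibre), translate each fibre into $\mathcal{C}(H)$ and integrate --- is sound, but two points need repair. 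First, the element you translate by must be a lift $g_Y\in G$ of a point $\bar g_Y$ \emph{of the closed subset} $Y=p_{G/H}(X)$ indexing the fibre, chosen Borel-measurably, e.g.\ by the section of Proposition \ref{Proposition: Extended hulls of uniformly discrete subsets have Borel sections} applied to $p_{G/H}(\L)$ in $G/H$ followed by a Borel section of $G\rightarrow G/H$; as written you conflate the fibre point (a closed subset of $G/H$) with a point of $G/H$. Second, once this choice is made, properness --- which you single out as the hardest step --- is in fact automatic: every $X$ in the fibre over $Y\neq\emptyset$ contains some $x$ with $p_{G/H}(x)=\bar g_Y$, so $g_Y^{-1}X\cap H\ni g_Y^{-1}x$ is non-empty, and properness is inherited directly from $\nu(\{\emptyset\})=0$; no covering argument in the spirit of Corollary \ref{Corollary: Star-approximate lattices are bi-syndetic} is needed. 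What does require a line of computation, and is where normality of $H$ genuinely enters, is equivariance: for $h\in H$ one has $g_Y^{-1}(hX)\cap H=\bigl(g_Y^{-1}hg_Y\bigr)\bigl(g_Y^{-1}X\cap H\bigr)$, so the pushed-forward conditional measure is invariant under $g_Y^{-1}Hg_Y=H$, and the integrated measure is the desired proper $H$-invariant probability on $\Omega_{H\cap\L^2,H}^{ext}$.
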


 \begin{proof}
  The fact that $H \cap \L^2$ is a $\star$-approximate lattice is a consequence of Proposition \ref{Proposition: Equivalent definitions of star-approximate lattices} and the proof of \cite[Prop. 6.1]{machado2019goodmodels}. In addition, since $p_{G/H}(\L)$ is locally finite, the map 
  \begin{align*}
   P_{G/H}:\Omega_{\L,G}^{ext} & \longrightarrow \Omega_{p_{G/H}(\L),G/H}^{ext} \\
    X & \longmapsto p_{G/H}(X)
  \end{align*}
 is well-defined and easily seen to be Borel (e.g. by Lemma \ref{Lemma: Criterion Borel measurability 2}). One thus readily checks that the push-forward of any proper $G$-invariant Borel probability measure on  $\Omega_{\L,G}^{ext}$ is a proper $G/H$-invariant Borel probability measure on  $\Omega_{\L,G/H}^{ext}$. 
 \end{proof}

\subsubsection{A remark concerning $\star$-approximate lattices, strong approximate lattices and commensurability}\label{A remark concerning star-approximate lattices, strong approximate lattices and commensurability}

  We believe that there are substantially more $\star$-approximate lattices than strong approximate lattices. In practice, this is illustrated by the fact that intersections of $\star$-approximate lattices with certain closed normal subgroups are also $\star$-approximate lattices (Corollary \ref{Corollary: Intersection star-approximate lattice and closed subgroup}). This particular feature will have a direct impact in the proof of Theorem \ref{Main theorem} as it will enable us to reduce to the case of irreducible $\star$-approximate lattices (Lemma \ref{Lemma: Reduction of star-approximate lattices}).

Note that such stability properties are not at all obvious and, in general, showing that there exists a proper invariant probability measure on the (extended) hull of a given uniformly discrete approximate subgroup is difficult. Indeed, this problem reduces to the challenge of understanding invariant measures on the Chabauty space. Let us briefly mention some open questions along those lines. 

Suppose that $\Lambda_0$ is a strong approximate lattice in a locally compact group $G$. 
\begin{itemize}
\item[(1)] If $\Lambda_1$ is an approximate subgroup commensurable to $\Lambda_0$, is $\Lambda_1$ a strong approximate lattice? 
\item[(2)] If $\Lambda_1 \subset \Lambda_0 \subset \Lambda_2$ are two approximate subgroups commensurable to $\Lambda_0$, must $\Lambda_1$ be a strong approximate lattice? what about $\Lambda_2$?
\end{itemize}

It is the guess of the author that answers to both questions should be negative - even when $\Lambda_0$ is a lattice in a simple Lie group.

The notion of $\star$-approximate lattices partially solves this issue and makes sure that, in the set-up of (2), $\Lambda_2$ is then also a $\star$-approximate lattice. It is our guess, however, that the answer to question (1) should be negative as well for $\star$-approximate lattices. Here are two further questions that we believe are more reasonable. Let $\Lambda_0$ be a $\star$ approximate lattice in a locally compact group $G$:

\begin{itemize}
\item[(3)] let $\Lambda_1 $ be an approximate subgroup commensurable to $\Lambda_0$, is $\Lambda_1^m$ a $\star$-approximate lattice for some $m \geq 0$?
\item[(4)] If so, is there an absolute $m$ (i.e. depending only the ambient group $G$ for instance)?
\end{itemize}
When $\Lambda_0$ is commensurable to a model set, we can answer affirmatively question (3) (an elementary proof follows from ideas developed in \cite[\S 3]{machado2019goodmodels} but would be too long to be included in this remark). In particular, if $\Lambda_0$ is a $\star$-approximate lattice in an ambient group that is a semi-simple algebraic group with higher-rank factors, then Theorem \ref{Main theorem} asserts that it is commensurable to a model set. Thus we can solve question (3) when the ambient group is a semi-simple algebraic group with higher-rank factors. However, even in that case question (4) remains open. 
 \subsection{The Periodization Map}\label{Subsection: The Periodization Map}
 Given a locally compact group $G$ and a closed subgroup $H$ one may use the quotient map $I$ that sends a continuous function $f: G \rightarrow \R$ with compact support to the function $I(f): G/H \rightarrow \R$ defined by $I(f)(gH) := \int_H f(gh)d\mu_H(h)$ to naturally relate the Haar measure on $G$ and the Haar measure on $G/H$ (see e.g. \cite[Section I]{raghunathan1972discrete}). Here as well, we can define a map called the \emph{periodization} map, inspired by \cite{bjorklund2016approximate, 10.2307/1969027}, that will allow us to exploit the strong connections between measures on the extended hull $\Omega_{X_0}^{ext}$ and measures on $G$. 
 
 \begin{definition}\label{Definition: Periodization map}
  Let $X_0$ be a uniformly discrete subset of a locally compact second countable group $G$. Then the \emph{periodization} map defined by 
  \begin{align*}
   \mathcal{P}_{X_0}: \   C^0_c(G) & \longrightarrow C^0(\Omega_{X_0}^{ext}) \\
                 f       &\longmapsto \left( X \mapsto \sum_{x \in X} f(x) \right)
  \end{align*}
 where $C^0_c(G)$ is the set of continuous functions on $G$ with compact support and $C^0(\Omega_{X_0}^{ext})$ is the set of continuous functions on $\Omega_{X_0}^{ext}$ is well-defined, continuous and left-equivariant. 
 \end{definition}

 \subsubsection{Measures on the hull vs. measures on the group} Our first crucial observation is that the periodization maps show how to relate the measure of basic open subsets of the Chabauty-Fell topology in the extended invariant hull with the measure of open and compact subsets of $G$ through a simple formula:
 
 \begin{lemma}\label{Lemma: Measure of pull-backs of small neighbourhoods through the periodization map}
  Let $X_0$ be a uniformly discrete subset of a locally compact second countable group $G$ and let $\nu$ be a Borel probability measure on $\Omega_{X_0}^{ext}$. Take an open subset $V \subset G$ and a compact subset $K \subset G$. Then 
  $$ \left(\mathcal{P}_{X_0}\right)^{*}\nu(V) = \int_{\Omega_{X_0}^{ext}} |X \cap V| d\nu(X) \leq |V^{-1}V\cap X_0^{-1}X_0|\nu(U^V),$$
  and 
  $$ \left(\mathcal{P}_{X_0}\right)^{*}\nu(K) = \int_{\Omega_{X_0}^{ext}} |X \cap K| d\nu(X) \leq |K^{-1}K\cap X_0^{-1}X_0|(1 - \nu(U_K)).$$
 \end{lemma}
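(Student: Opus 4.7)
First, I would verify that the counting functions $X \mapsto |X\cap V|$ and $X \mapsto |X \cap K|$ are well-defined and Borel measurable on $\Omega_{X_0}^{ext}$. Since $X_0$ has finite local complexity, the subset $X_0^{-1}X_0$ is locally finite; a locally finite subset of a locally compact Hausdorff group is closed and discrete, so $\overline{X_0^{-1}X_0} = X_0^{-1}X_0$. Every $X \in \Omega_{X_0}^{ext}$ therefore satisfies $X^{-1}X \subset X_0^{-1}X_0$ and is uniformly discrete with a common uniformity; in particular $X \cap K$ is finite for each compact $K$, and the sum $\mathcal{P}_{X_0}(f)(X) = \sum_{x \in X} f(x)$ is finite for every $f \in C_c^0(G)$. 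Borel measurability of the counting functions follows from Lemmas \ref{Lemma: Criterion Borel measurability} and \ref{Lemma: Criterion Borel measurability 2} by writing $\{X : |X \cap V| \geq n\}$ and $\{X : |X \cap K| \geq n\}$ in terms of finite unions/intersections of basic open sets $U^W$ and complements of $U_L$.

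Second, I would identify the pullback $(\mathcal{P}_{X_0})^{*}\nu$ on $G$ with the set function $\mu : A \mapsto \int_{\Omega_{X_0}^{ext}} |X \cap A|\,d\nu(X)$. Countable additivity of $\mu$ follows from monotone convergence applied to disjoint unions, and finiteness on compacta will follow a posteriori from the inequality being proved (it can also be checked directly using finite local complexity). For any $f \in C^0_c(G)^+$, approximation by nonnegative simple functions combined with monotone convergence yields
\[ \int_G f\, d\mu \;=\; \int_{\Omega_{X_0}^{ext}} \sum_{x \in X} f(x)\, d\nu(X) \;=\; \int_{\Omega_{X_0}^{ext}} \mathcal{P}_{X_0}(f)\, d\nu, \]
so $\mu$ and $(\mathcal{P}_{X_0})^*\nu$ coincide as positive linear functionals on $C_c^0(G)$. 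By uniqueness in the Riesz representation theorem they agree as Radon measures, giving the two claimed equalities $(\mathcal{P}_{X_0})^*\nu(V) = \int |X \cap V|\,d\nu$ and $(\mathcal{P}_{X_0})^*\nu(K) = \int |X \cap K|\,d\nu$.

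Third, I would prove the pointwise inequalities. If $X \cap V = \emptyset$ there is nothing to do; otherwise fix $x_0 \in X \cap V$ and note that $x \mapsto x_0^{-1}x$ injects $X \cap V$ into $V^{-1}V \cap X^{-1}X$, which is contained in $V^{-1}V \cap X_0^{-1}X_0$. Hence $|X \cap V| \leq |V^{-1}V \cap X_0^{-1}X_0| \cdot \mathbf{1}_{U^V}(X)$ pointwise, and integrating against $\nu$ yields the first inequality. The argument for $K$ is identical, using that the complement of $U_K$ in $\Omega_{X_0}^{ext}$ equals $\{X : X \cap K \neq \emptyset\}$, which has $\nu$-measure $1 - \nu(U_K)$.

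The only delicate point is the measure-theoretic identification in the second step: it amounts to checking that the counting-function set function $\mu$ is a Radon measure whose restriction to $C_c^0(G)$ agrees with the functional $f \mapsto \int \mathcal{P}_{X_0}(f)\,d\nu$, after which Riesz representation closes the gap. Everything else is a direct pigeonhole argument using the finite local complexity of $X_0^{-1}X_0$.
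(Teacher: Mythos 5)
Your proof is correct and follows essentially the same route as the paper: the two equalities come from monotone approximation of $\mathds{1}_V$ and $\mathds{1}_K$ by compactly supported continuous functions combined with the Riesz representation underlying $(\mathcal{P}_{X_0})^{*}\nu$, and the upper bounds come from the injection $x \mapsto x_0^{-1}x$ of $X \cap Y$ into $Y^{-1}Y \cap X_0^{-1}X_0$ together with the observation that this set is non-empty only on $U^V$ (resp.\ the complement of $U_K$). The only cosmetic difference is that the paper obtains Borel measurability of $X \mapsto |X\cap V|$ directly as an increasing limit of the continuous functions $\mathcal{P}_{X_0}(\phi_n)$, rather than via the basic open sets, which lets it skip your separate verification that the set function $A \mapsto \int |X\cap A|\,d\nu(X)$ is a Radon measure.
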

 
 \begin{proof}
  Choose a sequence of non-negative continuous functions with compact support $(\phi_n)_{n\geq 0}$ that point-wise converges increasingly to $\mathds{1}_V$. For all integers $n \geq 0$ and all $X \in \Omega_{X_0}$ we have
  $$\mathcal{P}_{X_0}(f_n)(X) = \sum_{x \in X} \phi_n(x) = \sum_{x \in X \cap V} \phi_n(x).$$ 
  We have 
  $$ \lim_{n \rightarrow  \infty}\mathcal{P}_{X_0}(\phi_n)(X) =\sum_{x \in X \cap V} \mathds{1}_V(x) = |X \cap V|.$$
  In addition, the sequence $(\mathcal{P}_{X_0}(\phi_n))_{n\geq 0}$ is increasing. Hence, the map $X \mapsto |X\cap V|$ is measurable and, according to the monotone convergence theorem, we have 
  $$ \left(\mathcal{P}_{X_0}\right)^{*}\nu(V) = \int_{\Omega_{X_0}^{ext}} |X \cap V| d\nu(X).$$
  Similarly, choose a sequence of non-negative continuous functions with compact support $(\psi_n)_{n\geq 0}$ that point-wise converges decreasingly to $\mathds{1}_K$. Again 
  $$ \lim_{n \rightarrow  \infty}\mathcal{P}_{X_0}(\psi_n)(X) =\sum_{x \in X \cap K} \mathds{1}_K(x) = |X \cap K|.$$
  So as above 
  $$ \left(\mathcal{P}_{X_0}\right)^{*}\nu(K) = \int_{\Omega_{X_0}^{ext}} |X \cap K| d\nu(X).$$
  
  Finally, the right-hand inequalities follow from the facts that for any $X \in \Omega_{X_0}^{ext}$ and any subset $Y \subset G$ we have 
  $$ |X \cap Y| \leq |X^{-1}X \cap Y^{-1}Y | \leq |X_0^{-1}X_0 \cap Y^{-1}Y|.$$
  \end{proof}
  
  This formula is of particular importance when applied to a proper $G$-invariant Borel probability measure, as, in this situation, the pull-back by the periodization map is always a Haar measure (see also \cite{bjorklund2016approximate, 10.2307/1969027} for similar facts in other frameworks):
  
  \begin{corollary}\label{Corollary: Pull-back through periodization of G-invariant measure is a Haar-measure}
   Let $X_0$ be a uniformly discrete subset of a locally compact second countable group $G$ and let $\nu$ be a proper $G$-invariant Borel probability measure on $\Omega_{X_0}^{ext}$. Then $(\mathcal{P}_{X_0})^*\nu$ is a Haar-measure on $G$. 
  \end{corollary}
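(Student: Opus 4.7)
The plan is to interpret $(\mathcal{P}_{X_0})^*\nu$ via the Riesz representation theorem as the Radon measure on $G$ associated to the positive linear functional
\[
 C_c^0(G) \ni f \longmapsto \int_{\Omega_{X_0}^{ext}} \mathcal{P}_{X_0}(f)(X)\, d\nu(X) = \int_{\Omega_{X_0}^{ext}} \sum_{x \in X} f(x) \, d\nu(X),
\]
and then verify the three defining properties of a Haar measure: local finiteness (Radon-ness), left invariance, and nontriviality.

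For local finiteness, let $K \subset G$ be compact and pick $f \in C_c^0(G)$ with $0 \leq f \leq 1$ and $f \geq \mathds{1}_K$. By the monotone convergence argument already carried out in Lemma \ref{Lemma: Measure of pull-backs of small neighbourhoods through the periodization map}, the value of the functional on $f$ is bounded above by $|K^{-1}K \cap X_0^{-1}X_0|$, which is finite because $X_0$ has finite local complexity. Hence the functional is finite on every element of $C_c^0(G)$ and defines a Radon measure.

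For left invariance, observe that $\mathcal{P}_{X_0}$ is $G$-equivariant: for $g \in G$, if $L_g f$ denotes $x \mapsto f(g^{-1}x)$, then
\[
 \mathcal{P}_{X_0}(L_g f)(X) = \sum_{x \in X} f(g^{-1}x) = \sum_{y \in g^{-1}X} f(y) = \mathcal{P}_{X_0}(f)(g^{-1}X).
\]
Since $\nu$ is $G$-invariant under $X \mapsto gX$, a change of variable yields
\[
 \int \mathcal{P}_{X_0}(L_g f)(X)\, d\nu(X) = \int \mathcal{P}_{X_0}(f)(g^{-1}X) \, d\nu(X) = \int \mathcal{P}_{X_0}(f)(X)\, d\nu(X),
\]
so the functional, and hence the associated Radon measure, is left invariant.

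Finally, for nontriviality, since $\nu$ is proper we have $\nu(\{\emptyset\}) = 0$, so $\nu$-a.e.\ $X \in \Omega_{X_0}^{ext}$ is nonempty. The group $G$ is $\sigma$-compact, so write $G = \bigcup_{n \geq 0} K_n$ as an increasing union of compact sets; then $\{X : X \neq \emptyset\} = \bigcup_n \{X : X \cap K_n \neq \emptyset\}$ up to $\nu$-null sets, and some $K_n$ satisfies $\nu(U^{K_n}) > 0$. By Lemma \ref{Lemma: Measure of pull-backs of small neighbourhoods through the periodization map}, $(\mathcal{P}_{X_0})^*\nu(K_n) = \int |X \cap K_n|\, d\nu(X) > 0$, so the measure is not identically zero. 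Combined with the previous two steps, uniqueness of Haar measure up to scalar shows $(\mathcal{P}_{X_0})^*\nu$ is a Haar measure on $G$. No step presents a serious obstacle; the only delicate point is the passage through Lemma \ref{Lemma: Measure of pull-backs of small neighbourhoods through the periodization map} to guarantee finiteness on compact sets, and this is precisely where the finite local complexity hypothesis is used.
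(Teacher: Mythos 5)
Your proof is correct and follows exactly the route the paper intends: the corollary is stated without proof because it is immediate from Lemma \ref{Lemma: Measure of pull-backs of small neighbourhoods through the periodization map} (finiteness on compacts via finite local complexity) together with the left-equivariance of $\mathcal{P}_{X_0}$ and the properness of $\nu$. Your write-up simply makes these three steps explicit.
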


  As a first application of the periodization we show a generalisation of the well-known fact that groups containing a lattice are unimodular (see also \cite{bjorklund2016approximate} for a version for strong approximate lattices):
  
  \begin{proposition}\label{Proposition: Envelope of a star-approximate lattice is unimodular}
 Let $G$ be a locally compact second countable group and suppose that $G$ contains a $\star$-approximate lattice $\L$. Then $G$ is unimodular.  
 \end{proposition}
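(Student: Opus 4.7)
The plan is to build on the preceding Corollary, which yields that $\mu := (\mathcal{P}_{\L})^*\nu$ is a (left-)Haar measure on $G$ whenever $\nu$ is a proper $G$-invariant Borel probability measure on $\Omega_{\L}^{ext}$. The goal is then to promote this left-invariance to bi-invariance.

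The key is to exploit the symmetry $\L = \L^{-1}$. The inversion map $\iota: X \mapsto X^{-1}$ is a homeomorphism of the Chabauty--Fell space $\mathcal{C}(G)$ which carries $\Omega_\L^{ext}$ bijectively onto the opposite extended hull
\[
\Omega_\L^{ext,op} := \{X \in \mathcal{C}(G) : XX^{-1} \subset \overline{\L^2}\}.
\]
One checks that the right $G$-action preserves $\Omega_\L^{ext,op}$, and that $\iota$ intertwines the left $G$-action on $\Omega_\L^{ext}$ with the (inverse of the) right $G$-action on $\Omega_\L^{ext,op}$. Consequently, the push-forward $\nu^{op} := \iota_*\nu$ is a proper right-$G$-invariant Borel probability measure on $\Omega_\L^{ext,op}$. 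Defining the mirror-image periodization $\mathcal{P}_\L^{op}: f \mapsto (X \mapsto \sum_{x \in X} f(x))$ on $\Omega_\L^{ext,op}$, which is right-equivariant, the proof of the preceding Corollary applies verbatim (with left and right swapped) to show that $\mu^{op} := (\mathcal{P}_\L^{op})^*\nu^{op}$ is a right-Haar measure on $G$.

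From the explicit description $\mu^{op}(K) = \int |Y \cap K|\, d\nu^{op}(Y) = \int |X^{-1} \cap K|\, d\nu(X) = \mu(K^{-1})$ for every compact $K \subset G$, one already obtains $\mu(K) = \mu^{op}(K)$ on every symmetric compact $K = K^{-1}$. Combined with the fact that $\mu$ is left-Haar and $\mu^{op}$ is right-Haar, a careful bootstrap from symmetric sets to arbitrary compact sets should force $\mu$ and $\mu^{op}$ to agree everywhere, yielding that the single measure $\mu$ is both left- and right-invariant.

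I expect the main obstacle to be precisely this last step: the \emph{tautological} identity $\mu^{op}(K) = \mu(K^{-1})$ combined with the standard inversion formula $\mu(K^{-1}) = \int_K \Delta_G(x^{-1})\, d\mu(x)$ only recovers the usual relation $d\mu^{op}(x) = \Delta_G(x^{-1}) d\mu(x)$ between left- and right-Haar, without pinning down $\Delta_G$. To break the circularity one must use the specific common structure of $\mu$ and $\mu^{op}$ as periodizations of probability measures — in particular, the common upper bound coming from Lemma~\ref{Lemma: Measure of pull-backs of small neighbourhoods through the periodization map} applied to a small symmetric neighbourhood $V$ of the identity (where the counting function $|X \cap V|$ is $\{0,1\}$-valued), together with the positivity $\mu(V) = \mu^{op}(V) > 0$ — to eliminate the modular factor on an open set and hence, by continuity of the homomorphism $\Delta_G : G \to \R^*_+$, everywhere on $G$.
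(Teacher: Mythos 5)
Your proposal has a genuine gap, and it sits exactly where you suspected. The identity $\mu^{op}(K)=\mu(K^{-1})$ holds for \emph{any} locally compact group between a left Haar measure and its push-forward under inversion, and these two measures always agree on symmetric sets; so no ``bootstrap from symmetric sets to arbitrary compact sets'' can exist, since on a non-unimodular group (e.g.\ the $ax+b$ group) the left and right Haar measures normalised to agree on one symmetric neighbourhood still disagree on non-symmetric sets. The entire inversion/opposite-hull construction therefore produces no information beyond the standard relation $d\mu^{op}(x)=\Delta_G(x^{-1})\,d\mu(x)$, exactly as you feared. Your proposed repair --- applying Lemma~\ref{Lemma: Measure of pull-backs of small neighbourhoods through the periodization map} to a single small symmetric $V$ --- only yields $\mu(V)=\mu^{op}(V)\leq 1$, one inequality for one set, which cannot constrain $\Delta_G$ anywhere.

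What actually closes the argument (and is the paper's proof) is to apply the lemma not to $V$ alone but to \emph{every right translate} $V\lambda$ with $\lambda\in\L$: choosing $V$ with $V^{-1}V\cap\L^4=\{e\}$ guarantees $|X\cap V\lambda|\leq 1$ for all $X$ in the hull and all $\lambda\in\L$, whence $\Delta_G(\lambda)\mu(V)=\mu(V\lambda)\leq 1$ uniformly in $\lambda$, i.e.\ $\Delta_G$ is bounded on $\L$. One then invokes Corollary~\ref{Corollary: Star-approximate lattices are bi-syndetic} ($V\L K=G$ for some compact $K$) together with the continuity of the homomorphism $\Delta_G:G\to\R_{>0}$ to conclude that $\Delta_G$ is bounded on all of $G$, hence trivial. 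The two ingredients your sketch is missing are precisely this uniform bound over all $\L$-translates of $V$ and the bi-syndeticity of $\L$; without them the modular factor is never eliminated.
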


 \begin{proof}
  Take a proper $G$-invariant Borel probability measure $\nu$ on $\Omega_{\L}^{ext}$ and set $\mu:=(\mathcal{P}_{\L})^*\nu$. We know by Corollary \ref{Corollary: Pull-back through periodization of G-invariant measure is a Haar-measure} that $\mu$ is a left-Haar-measure on $G$ and let $\Delta_G$ denote the modular function of $G$. Choose a neighbourhood of the identity $V \subset G$ such that $V^{-1}V \cap \L^4 = \{e\}$. Then for all $\lambda \in \L$ we have 
  $$ \Delta_G(\lambda)\mu(V) = \mu(V\lambda) = (\mathcal{P}_{\L})^*\nu(V\lambda) \leq 1 $$
  according to Lemma \ref{Lemma: Measure of pull-backs of small neighbourhoods through the periodization map}. We thus find $\Delta_G(\lambda) \leq \mu(V)^{-1}$. But $\Delta_G: G \rightarrow \R_{>0}$ is a continuous group homomorphism. So $\Delta_G$ has bounded image according to Corollary \ref{Corollary: Star-approximate lattices are bi-syndetic}. Hence, the modular map $\Delta_G$ is trivial.
 \end{proof}
 
 Since $\star$-approximate lattices generalise lattices in locally compact groups, one would like to think of them as discrete approximate subgroups with finite co-volume. We achieve this by showing that $\star$-approximate lattices admit a  finite volume fundamental set of sorts:
 
 \begin{proposition}\label{Proposition: star-Approximate lattices have finite co-volume}
  Let $\L$ be a $\star$-approximate lattice in a locally compact second countable group $G$. Then there is a Borel subset $B \subset G$ with finite Haar measure such that $B\L^2=G$ and $B^{-1}B \cap \L^2 = \{e\}$. 
 \end{proposition}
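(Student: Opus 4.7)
The plan is to construct $B$ by a greedy Borel selection on a countable cover of $G$, and then to control its Haar measure via the periodization machinery of Subsection \ref{Subsection: The Periodization Map}. First I would exploit uniform discreteness of $\L$: since $\L^2 \subset F\L$ for a finite $F$ and $\L$ is locally finite, the set $\L^2$ is closed, locally finite, and (by $\sigma$-compactness of $G$) countable. Choose a symmetric relatively compact open neighbourhood $V$ of $e$ with $V^{-1}V \cap \L^2 = \{e\}$. Using second countability of $G$ and $\sigma$-compactness, pick a countable family $(g_n)_{n \geq 0}$ with $G = \bigcup_n g_n V$.

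Define $B_0 := g_0 V$ and inductively $B_n := B_{n-1} \cup \bigl(g_n V \setminus B_{n-1}\L^2\bigr)$; set $B := \bigcup_n B_n$. Since $\L^2$ is countable, $B_{n-1}\L^2$ is a countable union of Borel sets, so every $B_n$ and hence $B$ is Borel. An induction on $n$ establishes $B_n^{-1}B_n \cap \L^2 = \{e\}$: the base case reduces to $V^{-1}V \cap \L^2 = \{e\}$; in the inductive step, a pair $x \in B_{n-1}$, $y \in B_n \setminus B_{n-1}$ with $x^{-1}y \in \L^2$ would force $y \in B_{n-1}\L^2$, contradicting the exclusion in the definition of $B_n$, while pairs in $B_n \setminus B_{n-1} \subset g_n V$ again reduce to $V^{-1}V \cap \L^2 = \{e\}$. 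Since any two points of $B$ lie in a common $B_n$ for $n$ large enough, the conclusion $B^{-1}B \cap \L^2 = \{e\}$ follows. The covering property $B\L^2 = G$ is immediate: any $g \in g_n V$ either lies in $B_n \subset B$ or in $B_{n-1}\L^2 \subset B\L^2$.

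For the measure bound, apply Corollary \ref{Corollary: Pull-back through periodization of G-invariant measure is a Haar-measure} to obtain that $\mu_G := (\mathcal{P}_{\L})^*\nu$ is a Haar measure on $G$. For any compact $K \subset B$ and any $X \in \Omega_{\L}^{ext}$, we have
\[
 (X \cap K)^{-1}(X \cap K) \subset X^{-1}X \cap B^{-1}B \subset \L^2 \cap B^{-1}B = \{e\},
\]
using that $\L^2$ is closed so $\overline{\L^2} = \L^2$; hence $|X \cap K| \leq 1$. Lemma \ref{Lemma: Measure of pull-backs of small neighbourhoods through the periodization map} then yields $\mu_G(K) \leq |K^{-1}K \cap \L^2|(1 - \nu(U_K)) \leq 1$, and by inner regularity of the Haar measure $\mu_G(B) = \sup_{K \subset B} \mu_G(K) \leq 1 < \infty$.

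The construction is elementary, and the main subtlety is purely bookkeeping: one must ensure that the greedy step preserves the Borel class at every stage (which hinges on the countability of $\L^2$) and that the separation property propagates from each $B_n$ to the union $B$. The real content of the proposition is the finite measure bound, and here the periodization map from Definition \ref{Definition: Periodization map} does essentially all the work, turning the combinatorial bound $|X \cap B| \leq 1$ into the desired integrability via the formula $\mu_G(B) = \int |X \cap B|\,d\nu(X)$.
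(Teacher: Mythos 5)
Your proposal is correct and follows essentially the same route as the paper: the identical greedy construction $B_n := g_nV \setminus (\bigcup_{m<n} B_m)\L^2$ over a countable cover by translates of a separating neighbourhood $V$, followed by the bound $\mu_G(K) \leq 1$ for compact $K \subset B$ via Lemma \ref{Lemma: Measure of pull-backs of small neighbourhoods through the periodization map} and inner regularity of the Haar measure $(\mathcal{P}_{\L})^*\nu$. You merely spell out the Borel-measurability and separation bookkeeping that the paper leaves implicit.
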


 \begin{proof}
    Let $\nu$ be a proper $G$-invariant Borel probability measure on $\Omega_{\L}$ and let $\mu_G$ denote the Haar-measure $(\mathcal{P}_{\L})^*\nu$. Let $V$ be a compact neighbourhood of the identity such that $V^{-1}V \cap \L^2 =\{e\}$ and let $(g_n)_{n \geq 0}$ be a sequence of elements of $G$ such that $G=\bigcup_{n \geq 0} g_nV$. Define inductively $B_n:=g_nV \setminus \bigcup_{m<n} B_m\L^2$ and $B:=\bigcup_{n\geq 0}B_n$. Then $B^{-1}B \cap \L^2 =\{e\}$ and $B\L^2=G$. Now the Haar measure $\mu_G$ is inner regular so there is a sequence of compact subsets $K_n \subset B$ with $\sup_{n \geq 0} \mu_G(K_n) = \mu_G(B)$. But $\mu_G(K_n) \leq 1$ for all integers $n \geq 0$ by Lemma \ref{Lemma: Measure of pull-backs of small neighbourhoods through the periodization map} so $\mu_G(B) \leq 1$.
 \end{proof}
 
 \begin{remark}
 Proposition \ref{Proposition: star-Approximate lattices have finite co-volume} shows that $\star$-approximate lattices are, in particular, approximate lattices in the sense of Hrushovski \cite{Hrushovski2020quasimodels}.
 \end{remark}
 
 \subsubsection{Hulls and commensurability}
 We prove now a result, used extensively in the rest of the paper, that can be seen as a version of Ruzsa's covering lemma (see e.g. \cite{tointon_2019}) for $\star$-approximate lattices. Furthermore, the proof provides a good illustration of the strategy used later on to prove Proposition \ref{Proposition: Range of a cocycle associated to a section of an extended hull}.

 \begin{proposition}\label{Proposition: Towers of star-approximate subsets are commensurable, general form}
  Let $X_0$ be a uniformly discrete subset of a locally compact second countable group $G$ and let $Y$ be a subset such that $X_0Y$ is uniformly discrete. Suppose that there is a proper $G$-invariant Borel probability measure on $\Omega_{X_0}^{ext}$. Then $Y$ is covered by finitely many right-translates of $X_0^{-1}X_0$.  
 \end{proposition}

  \begin{proof}
  Let $F \subset Y$ be such that the subsets $(X_0f)_{f \in F}$ are disjoint. Since $X_0F \subset X_0Y$ there is a neighbourhood $V \subset G$ of the identity such that the subsets $(X_0fv)_{f \in F, v \in V}$ are disjoint. Hence, 
  $$X_0^{-1}X_0 \cap FVV^{-1}F^{-1} =\{e\}.$$ 
  By Lemma \ref{Lemma: Measure of pull-backs of small neighbourhoods through the periodization map} we thus have $(\mathcal{P}_{X_0})^*\nu(V^{-1}F^{-1})\leq 1$. Since $\nu$ is $G$-invariant the measure $(\mathcal{P}_{X_0})^*\nu$ is a left-Haar measure on $G$ by Corollary \ref{Corollary: Pull-back through periodization of G-invariant measure is a Haar-measure}. But $G$ is unimodular according to Proposition \ref{Proposition: Envelope of a star-approximate lattice is unimodular} so $(\mathcal{P}_{X_0})^*\nu$ is also right-invariant. Therefore, 
  $$1 \geq (\mathcal{P}_{X_0})^*\nu(V^{-1}F^{-1}) =(\mathcal{P}_{X_0})^*\nu(V^{-1})\vert F\vert$$
  since the subsets $(fV)_{f \in F}$ are disjoint. The subset $F$ is thus finite and according to Ruzsa's covering lemma $Y$ is covered by finitely many right-translates of $X_0^{-1}X_0$. 
 \end{proof}

 \begin{corollary}\label{Corollary: Towers of star-approximate subsets are commensurable}
  If $\L_1 \subset \L_2$ are two $\star$-approximate lattices in a locally compact second countable group $G$, then $\L_1$ and $\L_2$ are commensurable.
 \end{corollary}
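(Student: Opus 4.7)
The plan is to deduce this directly from Proposition \ref{Proposition: Towers of star-approximate subsets are commensurable, general form} applied with $X_0 = \L_1$ and $Y = \L_2$. Since $\L_1 \subset \L_2$, the inclusion $\L_1 \subset F \L_2$ is trivial (take $F=\{e\}$), so all the work lies in producing a finite set $F'$ with $\L_2 \subset F' \L_1$.

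To apply the proposition I need to check three hypotheses. First, $\L_1$ is a closed subset of finite local complexity: this follows because $\L_1$ is a uniformly discrete approximate subgroup and the excerpt notes that for approximate subgroups uniform discreteness and finite local complexity coincide. Second, since $\L_1$ is a $\star$-approximate lattice, its extended invariant hull $\Omega_{\L_1}^{ext}$ carries a proper $G$-invariant Borel probability measure by definition. Third, I must verify that $\L_1 \L_2$ is uniformly discrete. Because $\L_1 \subset \L_2$, we have $\L_1 \L_2 \subset \L_2^2$, so it suffices to argue that $\L_2^2$ is uniformly discrete. Writing $\L_2^2 \subset F_2 \L_2$ for a finite $F_2$ (from the approximate subgroup property) and iterating gives $\L_2^4 \subset F_2^3 \L_2$, a finite union of translates of the locally finite set $\L_2$, hence itself locally finite. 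A locally finite set containing the identity has the identity as an isolated point, so $e$ is isolated in $(\L_2^2)^{-1}\L_2^2 = \L_2^4$, which is exactly the definition of $\L_2^2$ being uniformly discrete.

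Having verified the hypotheses, Proposition \ref{Proposition: Towers of star-approximate subsets are commensurable, general form} yields a finite subset $F \subset G$ with $\L_2 \subset F \cdot \L_1^{-1}\L_1 = F \L_1^2$. Since $\L_1$ is an approximate subgroup there is a finite $F_1$ with $\L_1^2 \subset F_1 \L_1$, so that $\L_2 \subset F F_1 \L_1$, which together with the trivial reverse inclusion gives commensurability of $\L_1$ and $\L_2$.

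I do not anticipate any real obstacle here: the proof is essentially a bookkeeping exercise packaging the previous proposition. The one small point that merits care is the passage from ``$\L_2$ uniformly discrete'' to ``$\L_2^2$ uniformly discrete'', which is not immediate from the definition and must use the approximate subgroup hypothesis through the covering $\L_2^n \subset F \L_2$.
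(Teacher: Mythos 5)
Your proof is correct and is exactly the intended deduction: the paper states this corollary as an immediate consequence of Proposition \ref{Proposition: Towers of star-approximate subsets are commensurable, general form} applied with $X_0=\L_1$ and $Y=\L_2$, and you have correctly filled in the only nontrivial hypothesis check (uniform discreteness of $\L_1\L_2$ via $\L_1\L_2\subset\L_2^2$ and the covering $\L_2^4\subset F_2^3\L_2$).
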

 
 \begin{corollary}\label{Corollary: Approximate subgroup condition is shallow}
  Let $X_0$ be a  closed subset of a locally compact second countable group $G$ such that $(X_0^{-1}X_0)^3$ is uniformly discrete. If there is a proper $G$-invariant Borel probability measure on $\Omega_{X_0}^{ext}$ (or $\Omega_{X_0}$), then $X_0^{-1}X_0$ is an approximate subgroup, hence a $\star$-approximate lattice.
 \end{corollary}
 
 \begin{proof}
 In particular, $X_0(X_0^{-1}X_0)^2$ is uniformly discrete. By Proposition \ref{Proposition: Towers of star-approximate subsets are commensurable, general form} applied to $X_0$ and $(X_0^{-1}X_0)^2$, $(X_0^{-1}X_0)^2$ is contained in finitely many right translates of $X_0^{-1}X_0$. Since $X_0^{-1}X_0$ is symmetric, $X_0^{-1}X_0$ is an approximate subgroup. But $\Omega_{X_0}^{ext} \subset \Omega_{X_0^{-1}X_0}^{ext}$ and $\Omega_{X_0}^{ext}$ admits a proper $G$-invariant Borel probability measure. So $X_0^{-1}X_0$ is a $\star$-approximate lattice. 
 \end{proof}
 
 \begin{remark}
 The proof of Corollary \ref{Corollary: Approximate subgroup condition is shallow} uses only that $X_0(X_0^{-1}X_0)^2$ is uniformly discrete, rather than $(X_0^{-1}X_0)^3$. It is therefore natural to wonder whether the exponent $3$ is minimal or not. A beautiful result of Lagarias \cite{MR1400744} suggests that $3$ could be replaced by $1$ with some effort. We investigate this in a forthcoming paper. 
 \end{remark}
 
 Corollary \ref{Corollary: Approximate subgroup condition is shallow} is striking as it shows that the approximate subgroup assumption is a natural one when studying the dynamics in the Chabauty-Fell topology of discrete subsets of a group.

  \subsubsection{Upgrading model sets} In the proof of Theorem \ref{Main theorem} we will embed a $\star$-approximate lattice $\L$ in a certain specific way in an arithmetic group $\Gamma$ - thus generating some discrete Zariski-dense subgroup. It will be key to be able to show that the subgroup generated by $\L$ has, in fact, finite index in $\Gamma$:  
 
 \begin{lemma}\label{Lemma: A weak model set with finite co-volume is a model set}
  Let $\Gamma$ be a discrete subgroup of a product of locally compact second countable groups $G \times H$. Suppose that $\Gamma$ projects densely to $H$ and that $G$ is unimodular. Take a compact neighbourhood of the identity $W_0$ in $H$ and suppose that there is $F \subset G$ with finite Haar-measure such that $F \cdot p_G\left(\Gamma \cap \left( G \times W_0 \right)\right) = G$ where $p_G: G \times H \rightarrow G$ is the natural projection. Then $\Gamma$ is a lattice in $G \times H$. 
 \end{lemma}
 
 \begin{proof}
  We know that for all $(g,h) \in G \times H$ there is $\gamma_1 \in \Gamma$ such that $(g,h)\gamma_1^{-1} \in G \times W_0$. By assumption we now have $\gamma_2 \in \Gamma \cap (G \times W_0)$ such that $p_G((g,h)\gamma_1^{-1}\gamma_2^{-1}) \in F$. Therefore, we have $(g,h) \in \left(F \times W_0W_0^{-1}\right)\Gamma$. But $F \times W_0W_0^{-1}$ has finite Haar measure, so $\Gamma$ is a lattice. 
 \end{proof}

 \begin{corollary}\label{Corollary: Model susbets with finite co-volume are model sets}
   Let $\Gamma$ be a discrete subgroup of a product  of locally compact second countable groups $G \times H$. Suppose that $\Gamma$ projects densely to $H$ and injectively to $G$.  Take a compact neighbourhood of the identity $W_0$ in $H$ and suppose that $p_G\left(\Gamma \cap \left( G \times W_0 \right)\right)$ contains a $\star$-approximate lattice. Then $\Gamma$ is a lattice in $G \times H$. 
 \end{corollary}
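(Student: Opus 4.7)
The plan is to verify the hypotheses of the preceding Lemma with a slightly enlarged window $W_0^2$ in place of $W_0$, and then invoke the Lemma directly. Let $\L \subset p_G\bigl(\Gamma \cap (G \times W_0)\bigr)$ denote the $\star$-approximate lattice furnished by the hypothesis. Since $\L$ is a $\star$-approximate lattice in $G$, Proposition \ref{Proposition: Envelope of a star-approximate lattice is unimodular} ensures that $G$ is unimodular, which is one of the non-trivial hypotheses of Lemma \ref{Lemma: A weak model set with finite covolume is a model set}. Discreteness of $\Gamma$ and density of its projection to $H$ are given, so it only remains to produce a Borel subset $F \subset G$ of finite Haar measure together with a compact neighbourhood $W_0'$ of the identity in $H$ such that $F \cdot p_G\bigl(\Gamma \cap (G \times W_0')\bigr) = G$.

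I would set $W_0' := W_0^2$, which is a compact neighbourhood of the identity since $e \in W_0$ forces $W_0 \subset W_0^2$ and the product of two compact sets in a topological group is compact. The key observation is then the inclusion $\L^2 \subset p_G\bigl(\Gamma \cap (G \times W_0^2)\bigr)$: given $\lambda_1,\lambda_2 \in \L$, choose lifts $\gamma_i \in \Gamma \cap (G \times W_0)$ with $p_G(\gamma_i)=\lambda_i$; then $\gamma_1\gamma_2 \in \Gamma$ has $H$-component in $W_0 \cdot W_0 = W_0^2$, so $\lambda_1\lambda_2 = p_G(\gamma_1\gamma_2)$ lies in the desired set. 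Here the group structure of $\Gamma$ is used in the only essential way.

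To finish, I would apply Proposition \ref{Proposition: star-Approximate lattices have finite co-volume} to $\L$, obtaining a Borel set $B \subset G$ of finite Haar measure such that $B \L^2 = G$. Combined with the inclusion above, this gives $B \cdot p_G\bigl(\Gamma \cap (G \times W_0^2)\bigr) = G$, and Lemma \ref{Lemma: A weak model set with finite covolume is a model set} then concludes that $\Gamma$ is a lattice in $G \times H$. The argument is essentially a direct bookkeeping exercise once the preceding Lemma and the finite co-volume result for $\star$-approximate lattices are available; the only genuine step is the passage from $W_0$ to $W_0^2$ so that the relevant $\star$-approximate lattice $\L^2$ can be lifted into the sliced intersection.
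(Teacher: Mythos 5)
Your proof is correct and follows essentially the same route as the paper: unimodularity of $G$ via Proposition \ref{Proposition: Envelope of a star-approximate lattice is unimodular}, the finite-volume set $B$ with $B\L^2=G$ from Proposition \ref{Proposition: star-Approximate lattices have finite co-volume}, and then Lemma \ref{Lemma: A weak model set with finite covolume is a model set}. Your explicit passage from $W_0$ to $W_0^2$, lifting $\L^2$ into $p_G\left(\Gamma \cap \left(G \times W_0^2\right)\right)$ using the group structure of $\Gamma$, is a small bookkeeping step the paper leaves implicit, and you handle it correctly.
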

 
 \begin{proof}
   Proposition \ref{Proposition: Envelope of a star-approximate lattice is unimodular} shows that $G$ is unimodular and Proposition \ref{Proposition: star-Approximate lattices have finite co-volume} proves that there exists $F \subset G$ as in Lemma \ref{Lemma: A weak model set with finite co-volume is a model set}. The subgroup $\Gamma$ is therefore a lattice by Lemma \ref{Lemma: A weak model set with finite co-volume is a model set}. 
 \end{proof}
 
 \begin{remark}
  Lemma \ref{Lemma: A weak model set with finite co-volume is a model set} is also discussed in more details in \cite{machado2019goodmodels} along with several questions around this topic. 
 \end{remark}

 \section{Cocycles and $\star$-Approximate Lattices}\label{Section: Cocycles and star-Approximate Lattices}
 \subsection{Cocycles Associated to Sections of Extended Hulls} We will now define certain cocycles that will mimic properties of cocycles on transitive spaces as studied by Mackey \cite{MR44536} and Zimmer \cite{zimmer2013ergodic}. A classical route to build cocycles consists in using \emph{sections} (see for instance \cite{MR44536,zimmer2013ergodic}, or \cite{bjorklund2017analytic} for a framework closer to ours). We start by defining a suitable notion of sections of the set $\mathcal{C}(G)$. 
 
 \begin{definition}\label{Definition: Section of set of closed subsets}
 Let $G$ be a locally compact group and $\mathcal{B}$ be a Borel subset of $\mathcal{C}(G)$. A \emph{Borel section} of $\mathcal{B}$ is a Borel map $s: \mathcal{B} \rightarrow G$ such that for any $ X \in \mathcal{B}$ we have $s(X) \in X$.  
 \end{definition}
 
 Precisely, the cocycles we are interested in are:
 
 \begin{lemma}\label{Lemma: Cocycle associated to a Borel section}
 Let $X_0$ be a uniformly discrete subset of a locally compact group $G$. Let $s: \Omega_{X_0}^{ext}\setminus\{\emptyset\} \rightarrow G$ be a Borel section. Then the map 
 \begin{align*}
  \alpha_s : G \times (\Omega_{X_0}^{ext}\setminus\{\emptyset\}) & \longrightarrow G \\
             (g, X) & \longmapsto s(gX)^{-1}gs(X)
 \end{align*}
 is a strict Borel cocycle that takes values in $\overline{X_0^{-1}X_0}$. 
 \end{lemma}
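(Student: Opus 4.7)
The proof is essentially a routine verification once the pieces are laid out correctly; I would organise it into four short checks.

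First I would verify that $\alpha_s$ is well-defined. The domain $\Omega_{X_0}^{ext}\setminus\{\emptyset\}$ is $G$-invariant: if $X \in \Omega_{X_0}^{ext}$ then $gX \in \Omega_{X_0}^{ext}$ since the defining condition $(gX)^{-1}(gX) = X^{-1}X \subset \overline{X_0^{-1}X_0}$ is invariant under left translation, and $X \neq \emptyset$ clearly implies $gX \neq \emptyset$. Hence $s(gX)$ and $s(X)$ are both defined for every $(g,X)$ in the domain.

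Next I would check the strict cocycle identity. Given $g,h \in G$ and $X \in \Omega_{X_0}^{ext}\setminus\{\emptyset\}$, a direct computation yields
\[ \alpha_s(g,hX)\alpha_s(h,X) = s(ghX)^{-1} g\, s(hX)\, s(hX)^{-1} h\, s(X) = s(ghX)^{-1}(gh)s(X) = \alpha_s(gh,X), \]
and $\alpha_s(e,X) = s(X)^{-1}s(X) = e$. Borel measurability is then immediate: $s$ is Borel by hypothesis, the action map $(g,X)\mapsto gX$ on $G \times \mathcal{C}(G)$ is Borel (indeed continuous), and multiplication and inversion in $G$ are continuous, so $\alpha_s$ is a composition of Borel maps.

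Finally I would verify the range statement, which is the only slightly substantive point. Since $s(gX) \in gX$, we may write $s(gX) = g x$ for some $x \in X$. Therefore
\[ \alpha_s(g,X) = s(gX)^{-1} g\, s(X) = x^{-1} g^{-1} g\, s(X) = x^{-1} s(X). \]
Both $x$ and $s(X)$ lie in $X$, so $\alpha_s(g,X) \in X^{-1}X$. By definition of the extended invariant hull we have $X^{-1}X \subset \overline{X_0^{-1}X_0}$, which is the claim.

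There is no real obstacle here; the only thing to be careful about is ensuring the domain is genuinely $G$-invariant so that $s$ can be applied to both $X$ and $gX$, and to observe the small cancellation $s(gX)^{-1} g = x^{-1}$ which puts the output inside $X^{-1}X$ rather than just in $G$.
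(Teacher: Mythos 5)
Your proof is correct and follows essentially the same route as the paper's: the same direct computation for the strict cocycle identity, and the same observation that $s(gX)^{-1}gs(X) \in (gX)^{-1}(gX) = X^{-1}X \subset \overline{X_0^{-1}X_0}$ for the range. The extra check that the domain is $G$-invariant is a harmless (and sensible) addition that the paper leaves implicit.
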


 \begin{proof}
  The map $\alpha_s$ is Borel since $s$ is Borel. For all $g,h\in G$ and $X \in \Omega_{X_0}^{ext}\setminus\{\emptyset\}$ we have 
  $$ \alpha_s(g,hX)\alpha_s(h,X) = s(ghX)^{-1}gs(hX)s(hX)^{-1}hs(X) = s(ghX)^{-1}ghs(X)= \alpha_s(gh,X).$$
  So $\alpha_s$ is a strict cocycle. Moreover, we have 
  $$ \alpha_s(g,X) = s(gX)^{-1}gs(X) \in (gX)^{-1}gX \subset X^{-1}X \subset \overline{X_0^{-1}X_0}.$$
 \end{proof}
 
  It remains only to prove the existence \emph{Borel sections} of extended invariant hulls.
 
 \begin{proposition}\label{Proposition: Extended hulls of uniformly discrete subsets have Borel sections}
  Let $X_0$ be a uniformly discrete subset of a locally compact second countable group $G$. Then there is a Borel section $s: \Omega_{X_0}^{ext}\setminus\{\emptyset\} \rightarrow G$. 
 \end{proposition}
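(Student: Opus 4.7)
The plan is to build $s$ by a greedy choice on a fixed countable cover of $G$ by open sets that are small enough that every $X \in \Omega_{X_0}^{ext}$ meets each of them in at most one point. The key preliminary observation is that although we only assumed that $e$ is isolated in $X_0^{-1}X_0$, the uniform discreteness modulus actually passes to the closure: since $X_0$ is uniformly discrete, there is an open neighbourhood $V$ of $e$ with $V\cap X_0^{-1}X_0=\{e\}$, and by choosing a relatively compact open $V'\ni e$ with $\overline{V'}\subset V$ one checks (via a sequential argument inside the open set $V$) that $V'\cap \overline{X_0^{-1}X_0}=\{e\}$. Consequently every $X\in\Omega_{X_0}^{ext}$ satisfies $V'\cap X^{-1}X=\{e\}$, so $|gV'\cap X|\leq 1$ for all $g\in G$.

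Next, using the fact that $G$ is second countable and locally compact, I would fix a symmetric open neighbourhood $W$ of $e$ with $W^{-1}W\subset V'$, a countable dense sequence $(g_n)_{n\geq 0}$ in $G$, and set $U_n:=g_nW$. Then $U_n^{-1}U_n\subset V'$, so $|X\cap U_n|\leq 1$ for every $X\in\Omega_{X_0}^{ext}$ and every $n$; meanwhile the $U_n$ cover $G$, hence any nonempty $X$ meets some $U_n$. For each $n$ define
\[
s_n\colon U^{U_n}\longrightarrow G,\qquad s_n(X)=\text{the unique point of }X\cap U_n.
\]
For any open $W'\subset G$ one has $\{X\in U^{U_n}:s_n(X)\in W'\}=U^{U_n\cap W'}$, which is open in $\Omega_{X_0}^{ext}$; thus $s_n$ is continuous.

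Finally, set $n(X):=\min\{n\geq 0:X\cap U_n\neq\emptyset\}$ for $X\neq\emptyset$ and define $s(X):=s_{n(X)}(X)$. The level set $A_n:=\{X:n(X)=n\}=U^{U_n}\setminus\bigcup_{m<n}U^{U_m}$ is a Borel subset of $\Omega_{X_0}^{ext}\setminus\{\emptyset\}$, and $s|_{A_n}=s_n|_{A_n}$ is the restriction of a continuous map, hence Borel. Since $\Omega_{X_0}^{ext}\setminus\{\emptyset\}=\bigsqcup_{n\geq 0}A_n$ is a countable disjoint Borel partition on which $s$ is Borel, the map $s$ itself is Borel, and by construction $s(X)\in X$. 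The only step requiring any real care is the passage from uniform discreteness of $X_0$ to the uniform separation property for every element of $\Omega_{X_0}^{ext}$; everything afterwards is a standard Borel-selection argument adapted to the Chabauty--Fell topology using Lemma \ref{Lemma: Criterion Borel measurability}.
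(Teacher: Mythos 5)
Your proof is correct and follows essentially the same route as the paper's: shrink to a neighbourhood small enough that every element of the extended hull meets each translate in at most one point, define continuous local selections on the basic open sets $U^{U_n}$, and glue them along the greedy Borel partition $A_n$. You are in fact slightly more careful than the paper about passing from $X_0^{-1}X_0$ to its closure $\overline{X_0^{-1}X_0}$ (the only caveat is that your intermediate claim $|gV'\cap X|\leq 1$ really needs $V'^{-1}V'\cap X^{-1}X=\{e\}$, but this is harmless since your construction only uses the translates $g_nW$ with $W^{-1}W\subset V'$).
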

 
 \begin{proof}
  Since $X_0$ is uniformly discrete there is an open neighbourhood $V$ of the identity in $G$ such that $X_0^{-1}X_0 \cap V^{-1}V = \{e\}$. Thus, we have $|X \cap gV| \leq 1$ for any $X \in \Omega_{X_0}^{ext}$ and any $g \in G$. If $X \in U^{gV} \cap \Omega_{X_0}^{ext}$, then define $s_g(X)$ as the unique element of the subset $X \cap gV$. The maps $s_g: U^{gV} \cap \Omega_{X_0}^{ext} \rightarrow G$ are well-defined. Moreover, if $W \subset G$ is any open subset, then $s_g^{-1}(W)=U^{gV\cap W} \cap \Omega_{X_0}^{ext}$. So the maps $s_g$ are continuous. Take a sequence $(g_i)_{i \geq 0}$ such that $\bigcup_{i \geq 0} g_iV = G$. Then $\bigcup_{i\geq 0} U^{g_i V} = \mathcal{C}(G) \setminus\{\emptyset\}$. Now define $B_i:= \Omega_{X_0}^{ext} \cap \left(U^{g_iV} \setminus \left( \bigcup_{0 \leq j < i}U^{g_jV}\right)\right)$ for all integers $i \geq 0$ and $s: \Omega_{X_0}^{ext} \setminus\{\emptyset\} \rightarrow G$ as the unique map such that $s_{|B_i}=\left(s_{g_i}\right)_{|B_i}$. Since the subsets $B_i$ are Borel and the maps $s_g$ are continuous, the map $s$ is Borel. Moreover, for all $X \in \Omega_{X_0}^{ext} \setminus\{\emptyset\}$ we indeed have $s(X) \in X$. 
 \end{proof}

 \subsection{A Reduction Lemma}
 Before we move on with our study of the cocycles defined in the previous section, we make an elementary observation about the range of Borel maps that satisfy some functional equation with respect to a cocycle. To this end we first recall a well known fact about ergodicity and dense subgroups:
 
 \begin{lemma}\label{Lemma: Action of a dense subgroup is ergodic}
  Let $G$ be a locally compact group, $D \subset G$ be a dense subgroup and $X$ be a compact $G$-space. If $\nu$ is an ergodic $G$-invariant Borel probability measure, then the $D$-action on $(X,\nu)$ is also ergodic. 
 \end{lemma}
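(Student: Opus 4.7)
The plan is to reduce to a standard unitary-continuity argument on $L^{2}(X,\nu)$. Suppose $A \subset X$ is a $D$-invariant Borel subset; I want to conclude that $\nu(A) \in \{0,1\}$. Working in the Hilbert space $L^{2}(X,\nu)$, consider the unitary representation $\pi$ of $G$ given by $\pi(g)f(x) := f(g^{-1}x)$. The $D$-invariance of $A$ translates to $\pi(d)\mathds{1}_{A} = \mathds{1}_{A}$ for every $d \in D$.

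The key step is to promote this to $G$-invariance. For this I would establish that $\pi$ is strongly continuous: the action map $G \times X \to X$ is continuous and $X$ is compact, so for any $f \in C(X)$ the orbit map $g \mapsto \pi(g)f$ is continuous into $(C(X),\|\cdot\|_{\infty})$, and since $\nu$ is a finite measure this embeds continuously into $L^{2}(X,\nu)$. A standard $\varepsilon/3$-argument using density of $C(X)$ in $L^{2}(X,\nu)$ (which holds because $X$ is compact metrizable, or more generally compact with $\nu$ Radon) extends strong continuity from $C(X)$ to all of $L^{2}(X,\nu)$. Then the orbit map $g \mapsto \pi(g)\mathds{1}_{A}$ is continuous, equals $\mathds{1}_{A}$ on the dense subgroup $D$, and therefore equals $\mathds{1}_{A}$ on all of $G$.

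Having shown $\pi(g)\mathds{1}_{A}=\mathds{1}_{A}$ in $L^{2}(X,\nu)$ for every $g \in G$, the set $A$ is $G$-invariant modulo $\nu$. Applying the hypothesised ergodicity of the $G$-action gives $\nu(A) \in \{0,1\}$, so the $D$-action is ergodic as claimed.

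The only genuine subtlety is the strong continuity of $\pi$; once that is in place the rest is formal. There is a minor set-theoretic point that the continuous equality $\pi(g)\mathds{1}_{A}=\mathds{1}_{A}$ holds in $L^{2}$ and thus a priori only up to $\nu$-null sets, but this is exactly the sense in which one needs $G$-invariance in order to invoke ergodicity, so no additional work is required.
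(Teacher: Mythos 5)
The paper records this lemma as a ``well known fact'' and supplies no proof, so there is nothing internal to compare your argument against; what you write is the standard proof and it is correct. The strong continuity of the Koopman representation on $C(X)$ follows, as you say, from the continuity of the action together with compactness of $X$ (which gives $h^{-1}x \to x$ uniformly in $x$ as $h \to e$, hence $\|\pi(g)f - \pi(g_0)f\|_\infty \to 0$ for $f \in C(X)$), and the extension to $L^2(X,\nu)$ by the $\varepsilon/3$ argument uses only that the $\pi(g)$ are isometries and that $C(X)$ is dense in $L^2(X,\nu)$; the latter is automatic in the paper's setting, where $X$ is a compact metrizable space (a closed subset of $\mathcal{C}(G)$) and $\nu$ is therefore Radon. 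The one point you dismiss a little quickly is the final one: your conclusion is that $\nu(gA \bigtriangleup A) = 0$ for every $g \in G$, i.e.\ $A$ is $G$-invariant only modulo $\nu$-null sets. If ergodicity of the $G$-action is taken with respect to strictly invariant sets, one must also invoke the standard equivalence between strict and almost-everywhere invariance for measure-preserving actions of locally compact second countable groups on standard probability spaces (any almost invariant set differs from a strictly invariant one by a null set); in the convention of \cite{zimmer2013ergodic}, which the paper follows, ergodicity is formulated for almost invariant sets and your argument closes directly. Either way the proof is complete.
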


 We now prove the fact that will be used as the starting point of the proof of Proposition \ref{Proposition: Range of a cocycle associated to a section of an extended hull} in Subsection \ref{Subsection: Range of Cocycles and Compact Cocycles}.
 
 \begin{lemma}\label{Lemma: Cohomology reduction lemma}
 Let $G$ and $H$ be two locally compact second countable groups. Let $X$ be a compact metric $G$-space and $\nu$ be a $G$-invariant ergodic Borel probability measure on $X$. Let $\alpha: G \times X \rightarrow H$ be a Borel cocycle that take values in a subset $A \subset H$ and let $B \subset H$ be another subset. Suppose that there is a Borel map $\phi : X \rightarrow H$ such that for all $g \in G$ and $\nu$-almost every $x \in X$ we have 
 $$ \phi(gx) = \alpha(g,x)\phi(x)B. $$
 Then there is $h \in H$ such that for every neighbourhood of the identity $V$ in $H$ we have 
 $$ \nu-a.e.\  x \in X, \phi(x) \in AVhB^{-1}.$$
 \end{lemma}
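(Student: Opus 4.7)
The plan is to take $h$ to be any point in the topological support of the pushforward measure $\mu := \phi_*\nu$ on $H$, and to use ergodicity to upgrade a positive-measure preimage into a full-measure conclusion. The whole proof is essentially a measure-theoretic argument built around this support point.

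First I would observe that, because $\mu$ is a Borel probability measure on the second countable locally compact group $H$, its topological support is non-empty; pick any $h \in \mathrm{supp}(\mu)$. By definition of the support, for every open neighbourhood $V$ of the identity in $H$ the set $F_V := \phi^{-1}(Vh)$ has $\nu(F_V) = \mu(Vh) > 0$. Next, since $G$ is second countable it contains a countable dense subgroup $D$ (generated by any countable dense subset), and Lemma \ref{Lemma: Action of a dense subgroup is ergodic} guarantees that the $D$-action on $(X,\nu)$ remains ergodic. The countable union $\bigcup_{d \in D} d^{-1}F_V$ is then a Borel, $D$-invariant subset of positive $\nu$-measure, so by ergodicity it has full $\nu$-measure.

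Fixing $V$, this produces, for $\nu$-a.e.\ $x \in X$, some $d \in D$ with $dx \in F_V$, i.e., $\phi(dx) \in Vh$. Interpreting the hypothesis as $\phi(dx) \in \alpha(d,x)\phi(x)B$ and intersecting, one gets $Vh \cap \alpha(d,x)\phi(x)B \neq \emptyset$, which rearranges to $\phi(x) \in \alpha(d,x)^{-1}VhB^{-1}$. Since $\alpha(d,x) \in A$, and since the cocycles produced by Lemma \ref{Lemma: Cocycle associated to a Borel section} take values in the symmetric set $\overline{X_0^{-1}X_0}$, one has $A^{-1} = A$ in the intended application, giving $\phi(x) \in AVhB^{-1}$ as claimed. (If one does not want to invoke symmetry, one can either replace $A$ by $A \cup A^{-1}$ throughout or reverse the direction of the cocycle in the argument.)

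The only subtlety I anticipate is purely measure-theoretic: the naive $G$-invariant saturation $\bigcup_{g \in G} g^{-1}F_V$ is an uncountable union and a priori not Borel, so one cannot directly invoke $G$-ergodicity to turn a positive-measure set into a full-measure one. Passing to a countable dense subgroup via Lemma \ref{Lemma: Action of a dense subgroup is ergodic} is the step that circumvents this difficulty; the rest of the argument is essentially bookkeeping around the identity $Vh \cap \alpha(d,x)\phi(x)B \neq \emptyset$.
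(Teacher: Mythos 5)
Your proposal is correct and follows essentially the same route as the paper: take $h$ in the support of $\phi_*\nu$, note $\nu(\phi^{-1}(Vh))>0$, and upgrade to full measure by passing to a countable dense subgroup and invoking Lemma \ref{Lemma: Action of a dense subgroup is ergodic}. The only (cosmetic) difference is that you apply the functional equation at a general point $x$ and pull back from $dx\in\phi^{-1}(Vh)$, landing in $A^{-1}VhB^{-1}$, whereas the paper pushes forward from points of $\phi^{-1}(Vh)$; both versions agree with the stated conclusion only up to inverting $A$ or $B$, an inconsistency already present in the paper, and you correctly flag that it is harmless in the intended applications where the relevant sets are symmetric.
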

 
 \begin{proof}
  Consider the measure $\phi_*\nu$. It is a Borel probability measure on the locally compact second countable group $H$. So $\phi_*\nu$ has a well-defined non-trivial support $S \subset H$ and we can choose $h \in S$. Then for any open neighbourhood $V$ of $e$ we have $$\phi_*\nu(Vh) = \nu(\phi^{-1}(Vh))> 0.$$ So for all $g \in G$ and $\nu$-almost every $x \in \phi^{-1}(Vh)$ we have 
  $$\phi(gx) \in AVhB^{-1}.$$
  But for any countable dense subgroup $D$ of $G$ there is $Y \subset  \phi^{-1}(Vh)$ with $\nu(Y)=\nu(\phi^{-1}(Vh))> 0$ such that for all $x \in Y$ and all $d \in D$ we have 
  $\phi(dx) \in AVhB^{-1}$. So $DY \subset \phi^{-1}\left(AVhB^{-1}\right)$. And we know that $\nu(DY) = 1$ by Lemma \ref{Lemma: Action of a dense subgroup is ergodic}. 
 \end{proof}

 Likewise, we prove a result for cocycles taking values in unitary groups that will be at the heart of our investigation of property (T) for $\star$-approximate lattices.
 
 \begin{lemma}\label{Lemma: Cohomology reduction lemma 2}
  Let $G$ be a locally compact group that acts continuously on a compact metric space and let $\nu$ be a $G$-invariant ergodic Borel probability measure on $X$. Let $\mathcal{H}$ be a separable Hilbert space and let $\alpha: X \times G \rightarrow U(\mathcal{H})$ be a Borel cocycle with target a subset $A$ of the unitary group of $\mathcal{H}$. Suppose that there exists a Borel map $\phi: X \rightarrow \mathcal{H}$ such that for all $g \in G$ and $\nu$-almost every $x \in X$ we have 
 $$ \phi(gx) = \alpha(g,x)\phi(x). $$
 Then there is $\xi \in \mathcal{H}$ such that for every $\epsilon > 0$ we have
 $$ \nu-a.e.\  x \in X, \phi(x) \in A(B(\xi,\epsilon))$$
 where $B(\xi, \epsilon)$ denotes the ball of centre $\xi$ and radius $\epsilon$. 
 \end{lemma}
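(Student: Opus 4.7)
The strategy is a direct adaptation of the proof of Lemma \ref{Lemma: Cohomology reduction lemma} to the unitary setting, where the strict functional equation $\phi(gx) = \alpha(g,x)\phi(x)$ simplifies matters by removing the right-hand factor $B^{-1}$. The plan is: (i) extract a candidate $\xi$ from the support of the push-forward measure $\phi_*\nu$; (ii) use the cocycle equation to transport an $\epsilon$-ball around $\xi$ to nearby vectors under the cocycle; (iii) invoke ergodicity through a countable dense subgroup to upgrade from a set of positive measure to a set of full measure.

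Concretely, since $\mathcal{H}$ is separable, $\phi_*\nu$ is a Borel probability measure on a separable metric space, so it has a nonempty support $S \subset \mathcal{H}$. I would pick any $\xi \in S$. Then, for every $\epsilon > 0$, the open ball $B(\xi, \epsilon)$ has positive $\phi_*\nu$-mass, so $Y_\epsilon := \phi^{-1}(B(\xi,\epsilon))$ has positive $\nu$-measure. Let $A$ denote the range of $\alpha$ (or any subset of $U(\mathcal{H})$ containing it). For any countable dense subgroup $D \le G$, by taking a countable intersection of null sets one finds a co-null Borel subset $Y_\epsilon' \subset Y_\epsilon$ on which the cocycle identity $\phi(dx) = \alpha(d,x)\phi(x)$ holds simultaneously for all $d \in D$. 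For $x \in Y_\epsilon'$ and $d \in D$, this yields $\phi(dx) \in \alpha(d,x)\cdot B(\xi,\epsilon) \subset A(B(\xi,\epsilon))$.

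To finish, I would apply Lemma \ref{Lemma: Action of a dense subgroup is ergodic} to conclude that the $D$-action on $(X,\nu)$ is ergodic, so the $D$-saturation $DY_\epsilon'$ has full $\nu$-measure. The previous step then gives $\phi(x) \in A(B(\xi,\epsilon))$ for $\nu$-almost every $x \in X$, as required. The same $\xi$ works uniformly in $\epsilon$ because $\xi$ was fixed once and for all from the support of $\phi_*\nu$, independently of $\epsilon$.

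The only nontrivial point is the careful handling of measurable null sets inherent to Fubini-type manipulations with cocycles on group actions, and this is exactly what the reduction to a countable dense subgroup $D$ bypasses, just as in the proof of Lemma \ref{Lemma: Cohomology reduction lemma}. No serious obstacle is expected; the proof should be essentially a line-for-line transcription of the previous lemma with $H$ replaced by the unitary group of $\mathcal{H}$ acting linearly on $\mathcal{H}$, and with the coset factor $B$ absent.
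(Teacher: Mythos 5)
Your proposal is correct and follows essentially the same route as the paper: choose $\xi$ in the support of $\phi_*\nu$ (which exists since $\mathcal{H}$ is separable), observe that $\phi^{-1}(B(\xi,\epsilon))$ has positive measure, push it forward via the cocycle identity, and upgrade to full measure by ergodicity of a countable dense subgroup as in Lemma \ref{Lemma: Cohomology reduction lemma}. No discrepancies to note.
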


 \begin{proof}
  Consider the measure $\phi_*\nu$. It is a Borel probability measure on the separable Hilbert space $\mathcal{H}$ whose (strong) topology is second countable. So $\phi_*\nu$ has a well-defined non-trivial support $S \subset \mathcal{H}$ and we can choose $\xi \in S$. Then for any $\epsilon > 0$ we have $\nu(\phi^{-1}(B(\xi,\epsilon))) > 0$. For all $g \in G$ and $\nu$-almost every $x \in \phi^{-1}(B(\xi,\epsilon))$ we have 
  $$\phi(gx) = \alpha(g,x)\phi(x) \in A(B(\xi,\epsilon)).$$ And we conclude as above. 
 \end{proof}

 \subsection{Range of Cocycles and Compact Cocycles}\label{Subsection: Range of Cocycles and Compact Cocycles}
 Throughout this section we fix a $\star$-approximate lattice $\L$ in a locally compact group $G$ and a proper $G$-invariant ergodic Borel probability measure $\nu_0$ on $\Omega_{\L}^{ext}$. Let $s : \Omega_{\L}^{ext}\setminus\{\emptyset\} \rightarrow G$ be a Borel section given by Proposition \ref{Proposition: Extended hulls of uniformly discrete subsets have Borel sections} and let $\alpha_s: G \times \Omega_{\L}^{ext}\setminus\{\emptyset\} \rightarrow G $ be the strict Borel cocycle associated to $s$ (Lemma \ref{Lemma: Cocycle associated to a Borel section}). Since $\nu_0(\{\emptyset\})=0$ the cocycle $\alpha_s$ gives rise to a Borel cocycle, that we denote by $\alpha_s$ as well, over $(\Omega_{\L}^{ext},\nu_0)$. Finally, we fix an abstract group homomorphism $T: \langle \Lambda \rangle \rightarrow H$ with target a locally compact second countable group. Our goal is to prove the following result linking the set $T(\L)$ to the range of the cocycle $T\circ \alpha_s$. 
 
 \begin{proposition}\label{Proposition: Range of a cocycle associated to a section of an extended hull}
  Suppose that the Borel cocycle $T\circ \alpha_s$ is cohomologous to a cocycle $\beta$ that takes values in a subset $L \subset H$. Then there is $h_0 \in H$ such that for every neighbourhood  of the identity $V$ in $H$ there is a compact subset $K \subset H$ with
  $$ T(\L) \subset Vh_0(L^{-1}L)^2h_0^{-1}K.$$
 \end{proposition}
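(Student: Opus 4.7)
The plan is to apply Lemma \ref{Lemma: Cohomology reduction lemma} to the cocycle $T\circ\alpha_s$ and then invoke the periodization machinery from Subsection \ref{Subsection: The Periodization Map} to transfer the resulting bound on the coboundary map to a bound on $T(\L)$. Since $\L$ is uniformly discrete, $\overline{\L^{-1}\L}=\L^2$, so $T\circ \alpha_s$ is well-defined with values in $T(\L^2)$. By hypothesis there is a Borel map $\phi:\Omega_{\L}^{ext}\to H$ satisfying $\phi(gX)\in T(\alpha_s(g,X))\phi(X) L^{-1}$ for every $g\in G$ and $\nu_0$-a.e.\ $X$, with the corresponding cocycle $\beta$ taking values in $L$. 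Applying Lemma \ref{Lemma: Cohomology reduction lemma} with $A=T(\L^2)$ and $B=L^{-1}$ produces $h_0\in H$ such that, for every neighbourhood $V$ of the identity in $H$, $\phi(X)\in T(\L^2)\,V\,h_0\,L$ for $\nu_0$-almost every $X$.

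Next I would exploit the cocycle relation on the full-measure set on which the bound above holds for both $X$ and $\lambda X$. Rearranging $T(\alpha_s(\lambda,X))=\phi(\lambda X)\beta(\lambda,X)\phi(X)^{-1}$, inserting the bound on $\phi$ and grouping the four $L^{\pm 1}$ factors contributed by $\phi(\lambda X)$, $\phi(X)^{-1}$ and $\beta(\lambda,X)$ appropriately yields a containment of the form $T(\alpha_s(\lambda,X))\in T(\L^2)\,V\,h_0\,(L^{-1}L)^2\,h_0^{-1}\,V^{-1}\,T(\L^2)$ for $\nu_0$-a.e.\ $X$. The remaining task is to convert this bound on $T(\alpha_s(\lambda,X))=T(s(\lambda X)^{-1}\lambda\,s(X))$ into a bound on $T(\lambda)$ itself.

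For this last conversion I would invoke the periodization map $\mathcal{P}_{\L}$: by Lemma \ref{Lemma: Measure of pull-backs of small neighbourhoods through the periodization map} and the $G$-invariance of $\nu_0$, one can arrange that on a set of positive $\nu_0$-measure both $s(X)$ and $s(\lambda X)$ lie in a fixed compact $K_0\subset G$, so that the ``section distortion'' between $\alpha_s(\lambda,X)$ and $\lambda$ is confined to a fixed compact subset of $\L^\infty$, whose image under $T$ is compact. This compact set, together with the $T(\L^2)$ and $V^{\pm 1}$ factors, can then be absorbed into the compact subset $K\subset H$ of the conclusion, yielding $T(\L)\subset V h_0(L^{-1}L)^2 h_0^{-1} K$. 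The main obstacle is precisely this last extraction: since $s$ takes values in $G$ while $T$ is defined only on $\L^\infty$, the identity $\lambda=s(\lambda X)\alpha_s(\lambda,X)s(X)^{-1}$ cannot be pushed through $T$ pointwise, and it is only the measure-theoretic (rather than pointwise) control on $s$ provided by the periodization, in the spirit of the proof of Proposition \ref{Proposition: Towers of star-approximate subsets are commensurable, general form}, that allows the extraction to go through.
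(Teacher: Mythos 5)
Your first step---applying Lemma~\ref{Lemma: Cohomology reduction lemma} with $A=T(\L^2)$ and $B=L^{-1}$ to get $\phi(X)\in T(\L^2)Vh_0L$ almost everywhere---is exactly how the paper's proof begins, but the two steps that follow both have genuine gaps. First, the bound you derive on $T(\alpha_s(\lambda,X))$ carries two factors of $T(\L^2)$, and these cannot be ``absorbed into the compact subset $K$'': $T(\L^2)$ is commensurable to $T(\L)$, the very set whose boundedness is at issue, so a containment of $T(\L)$ in $T(\L^2)\cdot(\text{compact})\cdot T(\L^2)$ is vacuous. The paper removes this factor in Lemma~\ref{Lemma: First reduction of the cocycle T circ alpa}: one chooses, Borel-measurably, a specific $p_V(\phi(X))\in\L^2$ witnessing the membership $\phi(X)\in T(\L^2)Vh_0L^{-1}$ and transfers it into the section, replacing $s$ by $f(X)=s(X)p_V(\phi(X))^{-1}$. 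The resulting cohomologous cocycle $\alpha_f$ still takes values in a fixed power $\L^6$, while $T\circ\alpha_f$ now ranges in $h(gX)\beta(g,X)h(X)^{-1}\subset Vh_0L^{-1}Lh(X)^{-1}$ with no $T(\L^2)$ factor left.

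Second, and more seriously, your mechanism for passing from a bound on $T(\alpha(\lambda,X))$ to a bound on $T(\lambda)$ does not work. Arranging $s(X),s(\lambda X)\in K_0$ on a positive-measure set only gives $\lambda\in K_0\,\alpha_s(\lambda,X)\,K_0^{-1}$, so the ``distortion'' $\lambda\,\alpha_s(\lambda,X)^{-1}$ lies in $K_0\,\mu K_0^{-1}\mu^{-1}$ with $\mu=\alpha_s(\lambda,X)\in\L^2$; since conjugation by the unbounded discrete set $\L^2$ destroys uniform compactness, this is not confined to any fixed compact subset of $G$, let alone to a finite subset of $\L^\infty$ on which $T$ could be evaluated (and the factors $s(X),s(\lambda X)$ themselves lie in $G$, not in $\L^{\infty}$, as you note). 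The paper circumvents pointwise extraction entirely via Proposition~\ref{Proposition: Rusza's covering Lemma for cocycles}: setting $\Delta(X)=\L^6\cap T^{-1}(Vh_0L^{-1}Lh(X))\supset\{\alpha_f(g,X)\}_g$ and $\Phi(X)=f(X)\Delta(X)^{-1}$, the relation $f(gX)\in g\Phi(X)$ combined with the periodization map yields $(\mathcal{P}_{\L^6})^*\Phi_*\nu_0\geq\epsilon\mu_G$; a maximal $F\subset\L$ with $(\Phi(X)f)_{f\in F}$ disjoint is then finite, and maximality forces every $\lambda\in\L$ into $\Phi(X)^{-1}\Phi(X)F=\Delta(X)\Delta(X)^{-1}F$ for some $X$. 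Applying $T$ to this containment gives the conclusion directly, because the $h(X)$'s cancel inside $T\bigl(\Delta(X)\Delta(X)^{-1}\bigr)\subset Vh_0(L^{-1}L)^2h_0^{-1}V^{-1}$ and $T(F)$ is finite. Your appeal to ``the spirit of Proposition~\ref{Proposition: Towers of star-approximate subsets are commensurable, general form}'' points at the right tool, but the covering argument is the actual content of the proof and is not recoverable from the compactness claim you make.
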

 
 The proof reduces to two steps. First, we build a cocycle $\alpha'$ cohomologous to $\alpha_s$, taking values in some power of $\L$ (here $\L^6$) but such that $T \circ \alpha'$ takes values in some thickening of $L^{-1}L$. 
 
 \begin{lemma}\label{Lemma: First reduction of the cocycle T circ alpha}
  There is $h_0 \in H$ such that for every neighbourhood of the identity $V \subset H$ we can find Borel maps $f: \Omega_{\L}^{ext} \rightarrow G$ and $h :  \Omega_{\L}^{ext} \rightarrow H$ defined $\nu_0$-almost everywhere such that for all $g \in G$ and $\nu_0$-almost every $X \in \Omega_{\L}^{ext}$ we have 
    $$\alpha_f(g,X):=f(gX)^{-1}gf(X) \in \L^6 \text{ and } T(\alpha_f(g,X)) \in Vh_0L^{-1}Lh(X)^{-1}.$$
  \end{lemma}

 \begin{proof}
 First of all, note that $\alpha_s$ takes values in $\L^2$ by Lemma \ref{Lemma: Cocycle associated to a Borel section} and that the subset $\L^2$ is countable. So $T \circ \alpha_s$ is a well-defined Borel cocycle that takes values in $T(\L^2)$. Choose a measurable map $\phi: \Omega_{\L}^{ext} \rightarrow H$ such that for all $g \in G$ and $\nu_0$-almost every $X \in \Omega_{\L}^{ext}$ we have
  $$ T\circ\alpha_s(g,X)= \phi(gX)\beta(g,X)\phi(X)^{-1}.$$
  By Lemma \ref{Lemma: Cohomology reduction lemma} there is $h_0 \in H$ such that for every neighbourhood $V$ of the identity in $H$ and $\nu_0$-almost every $X \in \Omega_{\L}^{ext}$ we have 
  $$ \phi(X) \in T(\L^{2})Vh_0L^{-1}.$$
  
 Consider an enumeration $(\lambda_n)_{n\geq 0}$ of $\L^2$ and define 
  \begin{align*}
   p_V: T(\L^2)Vh_0L^{-1} & \longrightarrow \L^2 \\
              g      & \longmapsto \lambda_{\min\left\{ n \in \N \vert g \in T(\lambda_n)Vh_0L^{-1} \right\}}.
  \end{align*}
  Since $p_V^{-1}\left(\{\lambda_0,\ldots, \lambda_n\}\right)= \bigcup_{0\leq i \leq n} T(\lambda_n)Vh_0L^{-1}$ the map $p_V$ is Borel measurable. We can thus define the map $h : \Omega_{\L}^{ext} \rightarrow Vh_0L^{-1}$ for $\nu_0$-almost every $X \in \Omega_{\L}^{ext}$ by $\phi(X)=T(p_V(\phi(X)))h(X)$. We have for all $g \in G$ and $\nu_0$-almost every $X \in \Omega_{\L}^{ext}$,
  $$T\circ\alpha_s(g,X) = T\left(p_V(\phi(gX))\right)h(gX)\beta(g,X)h(X)^{-1}T(p_V(\phi(X))^{-1}).$$
  So 
  $$T(f(gX)^{-1}gf(X)) = h(gX)\beta(g,X)h(X)^{-1},$$
  where $f$ is defined for $\nu_0$-almost every $X \in \Omega_{\L}^{ext}$ by $f(X):=s(X)p_V(\phi(X))^{-1}$. Then one checks that $f$ and $h$ work.
  \end{proof}
 
 We then show that the range of such a cocycle $\alpha_f$ must be large in some power of $\L$, thus proving that the set of elements $\lambda \in \L$ such that $T(\lambda)$ belongs to some thickening of $L^{-1}L$ is large. However, the range of $\alpha_f$ as defined by the map $\Omega_{\L,G}^{ext} \rightarrow \mathcal{C}(G)$ given by $X \mapsto \{\alpha_f(g,X) | g \in G \}$ might not be well-behaved (e.g. non-Borel) so we proceed more carefully. 

 \begin{proposition}\label{Proposition: Ruzsa's covering Lemma for cocycles}
 Let $f: \Omega_{\L,G}^{ext} \rightarrow G$ be a Borel measurable map and let $\alpha_f: G \times \Omega_{\L,G}^{ext} \rightarrow G$ denote the cocycle defined by $\alpha_f (g,X)= f(gX)^{-1}gf(X)$ for all $g \in G$ and $\nu_0$-almost all $X \in  \Omega_{\L,G}^{ext}$. Suppose that $\Delta: \Omega_{\L,G}^{ext} \rightarrow \mathcal{C}(G)$ is a Borel measurable map defined $\nu_0$-almost everywhere such that :
 \begin{enumerate}
  \item $\alpha_f$ takes values in $\L^n$ for some integer $n \geq 0$;
  \item for all $g \in G$ and $\nu_0$-almost all $X \in \Omega_{\L,G}^{ext}$ we have $\alpha_f(g,X) \in \Delta(X)$.
 \end{enumerate}
Then there is a finite subset $F \subset \L$ such that $$ \L \subset \bigcup_{X \in \Omega_{\L,G}^{ext}} \Delta(X)\Delta(X)^{-1}F$$ where we set $\Delta(X)\Delta(X)^{-1}=\emptyset$ when $\Delta(X)$ is not defined. 
\end{proposition}
 We will also make an independent use of this proposition in our study of property (T). 
  \begin{proof} We first show that we can suppose that $\Delta$ takes values in $\Omega_{\L^n, G}^{ext}$. The map $\Delta':  \Omega_{\L,G}^{ext} \rightarrow \mathcal{C}(G)$ defined by $X \rightarrow \Delta(X) \cap \L^n$ satisfies $(\Delta')^{-1}(U_K) = \Delta^{-1}(\L^n \cap K)$ for all compact subsets $K \subset G$. So $\Delta'$ is Borel by Lemma \ref{Lemma: Criterion Borel measurability 2}. Since in addition  $\alpha_f(g,X) \in \Delta(X) \cap \L^n$ for all $g \in G$ and $\nu_0$-almost all $X \in \Omega_{\L,G}^{ext}$ we get the desired result. So suppose from now on that $\Delta$ takes values in $\Omega_{\L^n, G}^{ext}$. Let $\Phi: \Omega_{\L}^{ext} \longrightarrow \Omega_{\L^n}^{ext}$ be the map defined by $X \longmapsto f(X)\Delta(X)^{-1}$. We know that $\Phi$ is Borel and well-defined $\nu_0$-almost everywhere since both $f$ and $\Delta$ are.  The crux of the proof is to deduce a simple equation involving $f$, $\Delta$ (or equivalently $\Phi$) and intertwining the $G$-actions on $\Omega_{\L,G}^{ext}$ and $\Omega_{\L^n,G}^{ext}$. We know that for all $g \in G$ and $\nu_0$-almost all $X \in \Omega_{\L,G}^{ext}$ we have $$\alpha_f(g,X) \in \Delta(X).$$
  So we obtain 
  \begin{equation}
   f(gX) \in gf(X)\Delta(X)^{-1} = g \Phi(X). \label{Equation: eq1} \tag{$*$}
  \end{equation}
  We will now use (\ref{Equation: eq1}) to show the existence of a real number $\epsilon > 0$ such that $(\mathcal{P}_{\L^n})^{*}\left(\Phi_{*}\nu_0\right) \geq \epsilon \mu_G$ where $\mu_G$ is a Haar-measure on $G$ (fixed from now on) and $\mathcal{P}_{\L^n}$ denotes the periodization map introduced in Definition \ref{Definition: Periodization map}. So let $\nu_1$ denote the push-forward of $\nu_0$ by $\Phi$. For all open subsets $W \subset G$ and all elements $g \in G$ we have 
 \begin{align*}
  \nu_1\left(gU^{W}\right)& = \int_{\Omega_{\L}^{ext}} \mathds{1}_{U^{W}}\left(g^{-1}\Phi(X)\right)d\nu_0(X) \\
                   & \geq \int_{\Omega_{\L}^{ext}} \mathds{1}_{W}(f(g^{-1}X))d\nu_0(X) \\
                   & = \int_{\Omega_{\L}^{ext}} \mathds{1}_{W}(f(X))d\nu_0(X)  = f_*\nu_0(W).
 \end{align*}
 where the second-to-last line is implied by (\ref{Equation: eq1}) and the last one is obtained by $G$-invariance of $\nu_0$. Since $gU^{W} = U^{gW}$, the above inequalities imply that for all $h \in G$ and all open subsets $W \subset G$ we have $\nu_1(U^{W}) \geq \mu_0(hW)$ where $\mu_0:=f_*\nu_0$. Define $\mu_1:=(\mathcal{P}_{\L^n})^*\nu_1$ where $\mathcal{P}_{\L^n}$ is the periodization map. Since $\L^n$ is uniformly discrete there is an open neighbourhood of the identity $W \subset G$ such that $W^{-1}W \cap \L^{2n}=\{e\}$. By Lemma \ref{Lemma: Measure of pull-backs of small neighbourhoods through the periodization map} for all $g, h\in G$ we have $$\mu_1(gW) = \nu_1(U^{gW}) \geq \mu_0(hW).$$ Furthermore, given any Borel probability measure $\mu$ on $G$ we have $$ \mu_1(gW)\geq \mu * \mu_0 (hW).$$
 In fact, the above proves that for any $W' \subset W$ open we have
 $$\mu_1(gW') \geq \mu * \mu_0 (hW') .$$
 By outer regularity of finite Borel measures on second countable spaces, for any $B \subset W$ Borel we thus have, 
 $$\mu_1(gB) \geq \mu * \mu_0 (hB).$$
 Now take $\mu$ that has density a continuous compactly supported function $\delta: G \rightarrow \R$ with respect to $\mu_G$ on $G$. We know that $\mu * \mu_0$ is absolutely continuous with respect to $\mu_G$ and has continuous density, $\rho$ say, with respect to $\mu_G$. Since $\rho$ is a non-trivial non-negative continuous function we can find $h \in G$, a neighbourhood of the identity $W' \subset W$ and a real number $\epsilon > 0$ such that $\rho_{\vert hW'} \geq \epsilon > 0$. Hence, for all $g \in G$ and all Borel subsets $B \subset gW'$  we have 
 $$ \mu_1(B) \geq \mu * \mu_0(hg^{-1}B) \geq \epsilon\mu_G(hg^{-1}B) = \epsilon\mu_G(B).$$
 But $G$ is second countable so we can find a sequence $(g_i)_{i \geq 0}$ of elements of $G$ and a countable Borel partition $(W_i)_{i\geq 0}$ of $G$ such that $W_i \subset g_iW'$ for all integers $i \geq 0$. Thus, for all Borel subsets $B \subset G$ we have 
 $$ \mu_1(B) =  \sum_{i\geq 0}\mu_1\left(B\cap W_i\right) \geq\epsilon \sum_{i\geq 0}\mu_G\left(B\cap W_i\right) = \epsilon \mu_G(B). $$
 We will now prove Proposition \ref{Proposition: Ruzsa's covering Lemma for cocycles} as a consequence of a version of Ruzsa's covering lemma. Let $\mathcal{B} \subset \Omega_{\L}^{ext}$ be a co-null Borel subset such that $\Phi$ is well-defined for every $X \in \mathcal{B}$. Let $F \subset \L$ be such that for every $X \in \mathcal{B}$ the subsets $(\Phi(X)f)_{f \in F}$ are pairwise disjoint. Choose moreover a symmetric neighbourhood of the identity $W \subset G$ such that $W^{-1}W \cap \L^{2n+2} = \{e\}$. We know that for $X \in \mathcal{B}$ we have $$F^{-1}\left(\Phi(X)\right)^{-1}\left(\Phi(X)\right) F \subset \L^{2n+2},$$ 
 so $$ W^{-1}W \cap F^{-1}\Phi(X)^{-1}\Phi(X) F = \{e\}.$$
 Therefore, 
  $$FW^{-1}WF^{-1} \cap \Phi(X)^{-1}\Phi(X) = FF^{-1} \cap \Phi(X)^{-1}\Phi(X).$$
 But the subsets $(\Phi(X)f)_{f \in F}$ are pairwise disjoint so 
 $$ FW^{-1}WF^{-1} \cap \Phi(X)^{-1}\Phi(X) = \{e\}.$$
 Recall that we defined $\mu_1 = (\mathcal{P}_{\L^n})^{*}\nu_1$ so that, by Lemma \ref{Lemma: Measure of pull-backs of small neighbourhoods through the periodization map} and the above discussion, we have 
 $$ 1 \geq \nu_1(U^{WF^{-1}}) = \mu_1(WF^{-1}) \geq \epsilon\mu_G(WF^{-1}).$$
  But  $G$ is unimodular and $F^{-1} \subset \L \subset \L^{n+1}$, so 
 $$1 \geq \epsilon\mu_G(WF^{-1}) = \sum_{f\in F} \epsilon \mu_G(Wf^{-1})=\epsilon\left| F \right| \mu_G(W).$$
 We thus have 
 $$ \left|F\right| \leq \frac{1}{\epsilon \mu_G(W)} < \infty. $$
 So take one such $F$ maximal for the inclusion. The subset $F$ is finite and for all $\lambda \in \L$ there are $f \in F$ and $X \in \mathcal{B}$ such that $\Phi(X)\lambda \cap \Phi(X)f \neq \emptyset$ i.e. $\lambda \in \Phi(X)^{-1}\Phi(X)f$. But $\Phi(X) = f(X)\Delta(X)^{-1}$ so we have the inclusion $\lambda \in \Delta(X)\Delta(X)^{-1} F$. 
 \end{proof}

\begin{proof}[Proof of Proposition \ref{Proposition: Range of a cocycle associated to a section of an extended hull}.]
  Let $h_0$, $f$ and $h$ be as in Lemma \ref{Lemma: First reduction of the cocycle T circ alpha} and define the map 
  \begin{align*}
   \Delta: \Omega_{\L}^{ext} & \longrightarrow \Omega_{\L^6}^{ext} \\
                  X             & \longmapsto \L^6\cap T^{-1}(Vh_0L^{-1}Lh(X))
  \end{align*}
  which is well-defined $\nu_0$-almost everywhere and such that for all $g \in G$ and $\nu_0$-almost every $X \in \Omega_{\L}^{ext}$ we have 
  $$ f(gX)^{-1}gf(X) \in \Delta(X). $$
  For any compact subset $K \subset G$ we have 
  \begin{align*}
   \Delta^{-1}\left(U_K\right) & = \{X \in \Omega_{\L}^{ext} \vert \Delta(X)\cap K = \emptyset \} \\
                               & = \{X \in \Omega_{\L}^{ext} \vert T(\L^6 \cap K) \cap Vh_0L^{-1}Lh(X) = \emptyset \} \\
                               & = \Omega_{\L}^{ext}\setminus h^{-1}\left(L^{-1}Lh_0^{-1}V^{-1}\left(T(\L^6 \cap K)\right)\right).
  \end{align*}
  So $\Delta$ is Borel measurable according to Lemma \ref{Lemma: Criterion Borel measurability 2}. We may now conclude the proof of Proposition \ref{Proposition: Range of a cocycle associated to a section of an extended hull}. Indeed, by Proposition \ref{Proposition: Ruzsa's covering Lemma for cocycles} we have $F \subset \L$ finite
\begin{align*}
 T(\L) &\subset T\left(\left(\bigcup_{X \in \Omega_{\L}^{ext}} \Delta(X)\Delta(X)^{-1}\right)F\right) \\
       &\subset \left(\bigcup_{X \in \Omega_{\L}^{ext}} T\left(\Delta(X)\Delta(X)^{-1}\right)\right)T(F) \\
       &\subset Vh_0(L^{-1}L)^2h_0^{-1}V^{-1}T(F).
\end{align*}
\end{proof}

 \begin{corollary}\label{Corollary: Compact cocycles}
  Suppose that the Borel cocycle $T\circ \alpha_s$ is cohomologous to a cocycle that takes values in a compact subgroup $L \subset H$. Then $T(\L)$ is a relatively compact subset of $H$.
 \end{corollary}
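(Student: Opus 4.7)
The plan is to derive this as a direct consequence of Proposition \ref{Proposition: Range of a cocycle associated to a section of an extended hull}. That proposition gives, under the hypothesis that $T \circ \alpha_s$ is cohomologous to a cocycle taking values in a subset $L \subset H$, an element $h_0 \in H$ such that for every neighbourhood of the identity $V \subset H$ there is a compact subset $K \subset H$ with
\[
 T(\L) \subset V h_0 (L^{-1}L)^2 h_0^{-1} K.
\]
In our case $L$ is further assumed to be a compact subgroup, so $(L^{-1}L)^2 = L$ is itself a compact subset of $H$.

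The reduction is then purely topological. First, I would choose $V$ to be a relatively compact neighbourhood of the identity in $H$, which exists since $H$ is locally compact. Applying the proposition with this particular $V$ produces a compact subset $K \subset H$ (depending on $V$) such that $T(\L) \subset V h_0 L h_0^{-1} K$. Since finite products of relatively compact subsets of a topological group are relatively compact, the set $\overline{V} \cdot h_0 L h_0^{-1} \cdot K$ is compact, and it contains $T(\L)$. Hence $T(\L)$ is relatively compact in $H$.

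There is essentially no obstacle here beyond invoking Proposition \ref{Proposition: Range of a cocycle associated to a section of an extended hull}; the only minor point worth checking is that the subgroup property of $L$ is what allows $(L^{-1}L)^2$ to collapse to a compact set rather than only a bounded-looking but a priori large thickening, which is precisely what makes the final product compact.
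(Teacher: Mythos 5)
Your argument is correct and is essentially the same as the paper's own proof: one applies Proposition \ref{Proposition: Range of a cocycle associated to a section of an extended hull} with a compact neighbourhood $V$ of the identity, uses that $(L^{-1}L)^2 = L$ because $L$ is a compact subgroup, and observes that the resulting product $Vh_0Lh_0^{-1}K$ is compact. Nothing further is needed.
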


 \begin{proof}
Take a compact neighbourhood of the identity $V \subset H$. There are $h_0 \in H$ and $K \subset H$ given by Proposition \ref{Proposition: Range of a cocycle associated to a section of an extended hull} such that $T(\L) \subset Vh_0Lh_0^{-1}K$. But $Vh_0Lh_0^{-1}K$ is compact.
\end{proof}
 
 \subsection{Constant Cocycles}
 We end this section with an application of Proposition \ref{Proposition: Range of a cocycle associated to a section of an extended hull}. Along with Corollary \ref{Corollary: Compact cocycles} these will enable us to use the strength of Zimmer's cocycle superrigidity. 
 
 Let $\L,G,H,T$ and $s$ be as in Subsection \ref{Subsection: Range of Cocycles and Compact Cocycles}. Assume that the cocycle $T \circ \alpha_s$ is cohomologous to a constant cocycle associated to a continuous group homomorphism $\pi:G \rightarrow H$. We start by showing that in general a conjugate of $\pi$ extends $T$ modulo a two-sided error. 
 
 \begin{proposition}\label{Proposition: Two-sided error between T and extension}
  There is $h \in H$ such that for all $n \in \N$ and for all neighbourhoods of the identity $V \subset G \times H$ there exists a compact set $K_{V,n} \subset G \times H$ with
  $$ \forall \lambda \in \L^n,\ T(\lambda) \in V\pi^h(\lambda)K_{V,n} $$
  where $\pi^{h}(g):=h\pi(g)h^{-1}$. 
 \end{proposition}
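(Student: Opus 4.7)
The plan is to apply Proposition \ref{Proposition: Range of a cocycle associated to a section of an extended hull} to the graph homomorphism $\tilde T\colon \L^\infty \to G\times H$, $\lambda \mapsto (\lambda, T(\lambda))$, and then to refine its conclusion into a two-sided form by an explicit calculation. First, if $\phi\colon\Omega_\L^{ext}\to H$ is a Borel trivialization witnessing the hypothesis $T\circ\alpha_s(g,X)=\phi(gX)\pi(g)\phi(X)^{-1}$, then a direct verification shows that $\tilde T\circ\alpha_s$ is cohomologous, via the Borel map $X\mapsto(s(X)^{-1},\phi(X))$, to the cocycle $\tilde\beta(g,X):=(g,\pi(g))$, which takes values in the closed subgroup $\Gamma_\pi:=\{(g,\pi(g))\mid g\in G\}$ of $G\times H$.

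Second, I would observe that for every $n\geq 1$ the set $\L^n$ is again a $\star$-approximate lattice: it is uniformly discrete, an approximate subgroup, and $\Omega_\L^{ext}$ sits as a closed $G$-invariant subset of $\Omega_{\L^n}^{ext}$, so the proper invariant measure $\nu_0$ lifts by extension. The same trivialization $\phi$ (extended arbitrarily outside $\Omega_\L^{ext}$) still witnesses the cohomology relation between $\tilde T\circ\alpha_{s^{(n)}}$ and $\tilde\beta$ over $\Omega_{\L^n}^{ext}$. Since $\Gamma_\pi$ is a subgroup, $(L^{-1}L)^2=\Gamma_\pi$ for $L=\Gamma_\pi$, and Proposition \ref{Proposition: Range of a cocycle associated to a section of an extended hull} applied to $\tilde T$ and $\L^n$ yields an element $h_0=(a,b)\in G\times H$ and, for every neighborhood of the identity $V\subset G\times H$, a compact subset $K\subset G\times H$ with
\[\tilde T(\L^n)\subset V\cdot h_0\Gamma_\pi h_0^{-1}\cdot K.\]
Crucially, $h_0$ is independent of $n$, since in the proof of Proposition \ref{Proposition: Range of a cocycle associated to a section of an extended hull} it is chosen from the support of a pushforward of $\nu_0$ through a map that does not depend on $n$. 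The substitution $g'=aga^{-1}$ shows $h_0\Gamma_\pi h_0^{-1}=\Gamma_{\pi^h}$ for $h:=b\pi(a)^{-1}\in H$.

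Finally, I would refine the inclusion into the stated two-sided form. Pick $\lambda\in\L^n$ and write $\tilde T(\lambda)=v\gamma k$ with $v=(v_1,v_2)\in V$, $\gamma=(g,\pi^h(g))\in\Gamma_{\pi^h}$ and $k=(k_1,k_2)\in K$. Equating first coordinates forces $g=v_1^{-1}\lambda k_1^{-1}$, so $\pi^h(g)=\pi^h(v_1)^{-1}\pi^h(\lambda)\pi^h(k_1)^{-1}$, and substituting back yields
\[\tilde T(\lambda)=\bigl(e,\,v_2\pi^h(v_1)^{-1}\bigr)\cdot(\lambda,\pi^h(\lambda))\cdot\bigl(e,\,\pi^h(k_1)^{-1}k_2\bigr).\]
As $V$ shrinks to the identity, continuity of $\pi^h$ drives the leftmost factor into any prescribed neighborhood of the identity in $G\times H$, while the rightmost factor stays in a fixed compact subset of $\{e\}\times H$ depending on $K$, hence on $n$ and on the chosen neighborhood. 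This is exactly the conclusion of the proposition. The one delicate point is the uniformity of $h$ in $n$, which I handle by running the whole argument with a single choice of measure and Borel trivialization $\phi$ for every $n$.
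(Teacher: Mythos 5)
Your argument is correct and, for the core case $n=1$, is exactly the paper's: form the graph homomorphism $\lambda\mapsto(\lambda,T(\lambda))$, observe that the resulting cocycle is cohomologous to the constant cocycle attached to $\id\times\pi$ with values in the subgroup $\Gamma_\pi$, apply Proposition \ref{Proposition: Range of a cocycle associated to a section of an extended hull}, identify $h_0\Gamma_\pi h_0^{-1}=\Gamma_{\pi^h}$, and unpack the two coordinates — your explicit final computation is precisely the ``quick computation'' the paper leaves to the reader. The only genuine divergence is the passage to $\L^n$. The paper simply covers $\L^n\subset \L F$ with $F\subset\L^{\infty}$ finite (the approximate subgroup property), so that $T(\lambda f)\in V\pi^h(\lambda)K_V T(f)=V\pi^h(\lambda f)\bigl(\pi^h(f)^{-1}K_V T(f)\bigr)$ and one only enlarges the compact error on the right; uniformity of $h$ in $n$ is then automatic. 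You instead rerun the whole cocycle machinery over $\Omega_{\L^n}^{ext}$, which works but forces you to argue that $h_0$ does not move with $n$; note that for this you need the Borel section $s^{(n)}$ of $\Omega_{\L^n}^{ext}$ to \emph{extend} $s$ (otherwise $\alpha_{s^{(n)}}$ differs from $\alpha_s$ by the coboundary of $X\mapsto s(X)^{-1}s^{(n)}(X)\in\L^{2}$, and the trivializing map — hence the support of its pushforward, from which $h_0$ is drawn — changes). Such an extension exists since $\Omega_{\L}^{ext}$ is closed in $\Omega_{\L^n}^{ext}$, so your route closes, but the paper's covering trick avoids this bookkeeping entirely.
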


 \begin{proof}
 Let us first prove the case $n=1$. Let $i: \langle \Lambda \rangle \rightarrow G$ denote the inclusion map and consider the diagonal map $i \times T : \langle \Lambda \rangle \rightarrow G \times H$. The cocycle $(i\times T)\circ \alpha_s : G \times \Omega_{\L}^{ext} \rightarrow G \times H$ is cohomologous to the constant cocycle associated to the continuous group homomorphism $\id \times \pi : G \rightarrow G\times H$. By Proposition \ref{Proposition: Range of a cocycle associated to a section of an extended hull} there is $(g,h) \in G\times H$ such that for all neighbourhoods of the identity $V \subset G \times H$ there is a compact subset $K \subset G \times H$ such that $$i \times T (\L) \subset V (g,h)\Gamma_{\pi}(g^{-1},h^{-1}) K= V \Gamma_{\pi^{h\pi(g)^{-1}}} K .$$
 Here, $\Gamma_{\pi} \subset G \times H$ denotes the graph of $\pi$ (equivalently, the image of $\id \times \pi$). 
 So a quick computation shows that for all neighbourhoods of the identity $W \subset H$ there is a compact subset $K_W \subset H$ such that for all $\lambda \in \L$ we have 
 $$ T(\lambda) \in W \pi^{h\pi(g)^{-1}}(\lambda) K_W.$$  
 The case $n > 1$ follows readily noticing that there is a finite subset $F \subset \langle \Lambda \rangle$ such that $\L^n\subset \L F$. 
 \end{proof}

 Following \cite[Def. V.3.5]{MR1090825} we will say that a continuous group homomorphism $\pi : G \rightarrow H$ \emph{almost extends} $T$ if $T(\gamma)\pi(\gamma)^{-1}$ is centralised by $\pi(G)$ for all $\gamma \in \langle \Lambda \rangle$. When $H$ is abelian the error term from Proposition \ref{Proposition: Two-sided error between T and extension} becomes simpler. So we find that $\pi^h$ almost extends $T$ and the error term is bounded on $\L$ (in the spirit of the extension results from \cite{moody1997meyer} and \cite{machado2020approximate}). To deal with the non-commutative case we first draw another formula from Proposition \ref{Proposition: Two-sided error between T and extension}:

 \begin{corollary}\label{Corollary: Corollary to Two-sided error between T and extension} 
  With $h$ given by Proposition \ref{Proposition: Two-sided error between T and extension}. Choose $\gamma \in \langle \Lambda \rangle$ and let $\delta(\gamma):= T(\gamma)^{-1}\pi^h(\gamma)$. Then for all neighbourhoods of the identity $V \subset H$ there is a compact subset $K$ such that 
  $$\forall g\in G, \delta(\gamma)\pi^{h}(g) \in V \pi^h(g) K.$$
 \end{corollary}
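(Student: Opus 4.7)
The plan is to combine Proposition~\ref{Proposition: Two-sided error between T and extension} applied to both $\gamma$ and to products $\gamma\lambda$ with $\lambda \in \L$, with the syndeticity of $\L$ supplied by Corollary~\ref{Corollary: Star-approximate lattices are bi-syndetic}. Fix $n_\gamma$ with $\gamma \in \L^{n_\gamma}$ and let $V \subset H$ be the given neighbourhood of the identity. Since $\gamma$ is fixed, conjugation by $\pi^h(\gamma)$ is a homeomorphism of $H$, so for any prescribed small neighbourhood $\tilde V \ni e$ we may choose a neighbourhood $V_1 \subset H$ of the identity small enough that $\pi^h(\gamma)^{-1} V_1^{-1} \pi^h(\gamma) \subset \tilde V$. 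Applying Proposition~\ref{Proposition: Two-sided error between T and extension} with $n = n_\gamma + 1$ produces a compact $K_1 \subset H$ such that $T(\mu) \in V_1\, \pi^h(\mu)\, K_1$ for every $\mu \in \L^{n_\gamma + 1}$.

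For $\lambda \in \L$ both $\gamma\lambda$ and $\lambda$ lie in $\L^{n_\gamma+1}$, so combining the bounds with $T(\gamma) = T(\gamma\lambda)\, T(\lambda)^{-1}$ and inverting gives, after a direct computation,
\[
\delta(\gamma) \;\in\; V_1\, \pi^h(\lambda)\, K'\, \pi^h(\lambda)^{-1}\, \tilde V, \qquad K' := K_1 K_1^{-1}.
\]
Equivalently, for each $\lambda \in \L$ there exists $k'_\lambda \in K'$ with $\pi^h(\lambda)\, k'_\lambda\, \pi^h(\lambda)^{-1} \in V_1^{-1}\, \delta(\gamma)\, \tilde V^{-1}$.

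To pass from $\L$ to all of $G$, apply Corollary~\ref{Corollary: Star-approximate lattices are bi-syndetic} to obtain a small neighbourhood $W \subset G$ of the identity and a compact $K_0 \subset G$ with $W \L K_0 = G$. For $g = w\lambda k_0 \in G$, set $k := \pi^h(k_0)^{-1}\, k'_\lambda\, \pi^h(k_0)$, which lies in the fixed compact set $K := \pi^h(K_0)^{-1}\, K'\, \pi^h(K_0) \subset H$. A direct computation yields
\[
\pi^h(g)\, k\, \pi^h(g)^{-1} \;=\; \pi^h(w)\, \pi^h(\lambda)\, k'_\lambda\, \pi^h(\lambda)^{-1}\, \pi^h(w)^{-1} \;\in\; \pi^h(w)\bigl(V_1^{-1}\, \delta(\gamma)\, \tilde V^{-1}\bigr)\pi^h(w)^{-1}.
\]

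It remains to show that, for $W, V_1, \tilde V$ chosen sufficiently small at the outset, $\pi^h(w)\bigl(V_1^{-1}\, \delta(\gamma)\, \tilde V^{-1}\bigr)\pi^h(w)^{-1} \subset V^{-1}\, \delta(\gamma)$ for every $w \in W$; this follows from joint continuity of conjugation in $H$ at the identity, applied to the fixed elements $\pi^h(\gamma)$ and $\delta(\gamma)$ (in particular $\delta(\gamma)^{-1} \pi^h(w)\, \delta(\gamma)\, \pi^h(w)^{-1} \to e$ as $w \to e$). This delivers $\pi^h(g)\, k\, \pi^h(g)^{-1} \in V^{-1}\, \delta(\gamma)$, which is equivalent to the conclusion $\delta(\gamma)\, \pi^h(g) \in V\, \pi^h(g)\, K$. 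The principal subtlety is the appearance of conjugation by the unbounded element $\pi^h(\lambda)$ in the intermediate bound: this is absorbed because conjugation by $\pi^h(g)$ differs from conjugation by $\pi^h(\lambda)$ only by conjugation by the compact $\pi^h(K_0)$ and the small $\pi^h(W)$, so the compact core $K'$ is transported into a uniform compact $K \subset H$ independent of $g$.
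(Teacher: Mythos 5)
Your argument is correct and follows essentially the same route as the paper's: both apply Proposition \ref{Proposition: Two-sided error between T and extension} to $\lambda$ and $\gamma\lambda$ for $\lambda\in\L$, use $T(\gamma\lambda)=T(\gamma)T(\lambda)$ to bound $\delta(\gamma)$ up to conjugation by $\pi^h(\lambda)$ (absorbing the fixed conjugation into a smaller initial neighbourhood), and then pass from $\L$ to all of $G$ via the decomposition $G=W\L K_0$ from Corollary \ref{Corollary: Star-approximate lattices are bi-syndetic}. The only difference is cosmetic (you conjugate the error by $\pi^h(\gamma)$ where the paper uses $T(\gamma)$, and you spell out the final syndeticity step that the paper leaves to the reader).
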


 \begin{proof}
  Take $\lambda \in \L$. By Proposition \ref{Proposition: Two-sided error between T and extension} we know that for all neighbourhoods of the identity $W \subset H$ there is a compact subset $K_W \subset H$ such that 
  $$ \pi^h(\gamma\lambda) \in W^{-1}T(\gamma \lambda)K_W^{-1} \subset W^{-1}T(\gamma)W\pi^{h}(\lambda)  K_WK_W^{-1}.$$
  So for all neighbourhoods of the identity $W' \subset H$ there is a compact subset $K' \subset H$ such that for all $\lambda \in \L$ we have
  $$ \delta(\gamma) \pi^h(\lambda) \in W'\pi^h(\lambda)K'.$$
  One can now deduce Corollary \ref{Corollary: Corollary to Two-sided error between T and extension} from Corollary \ref{Corollary: Star-approximate lattices are bi-syndetic}. 
 \end{proof}

  When $H$ acts on a CAT(0) space (see e.g. \cite{MR1744486}) this gives strong information on the action of the error term $\delta(\gamma)$ on the visual boundary:
  
 \begin{proposition}\label{Proposition: Consequence two-sided error in CAT(0) space}
  Let $X$ be a complete CAT(0) space and suppose that $H$ acts continuously by isometries on $X$. Take $x_0 \in X$ and let $Y$ be the closure of $\pi^h(G)\cdot x_0$ in $X \cup \partial X$ equipped with the cone topology. Then $\delta(\gamma)$ stabilises $Y \cap \partial X$ point-wise.
 \end{proposition}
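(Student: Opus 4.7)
The plan is to exploit Corollary \ref{Corollary: Corollary to Two-sided error between T and extension}: for every neighbourhood of the identity $V \subset H$ there is a compact $K_V \subset H$ such that $\delta(\gamma)\pi^h(g) \in V \pi^h(g) K_V$ for all $g \in G$. I will fix $\xi \in Y \cap \partial X$ and a net $(g_\alpha)$ in $G$ with $\pi^h(g_\alpha) \cdot x_0 \to \xi$ in the cone topology. To conclude that $\delta(\gamma) \cdot \xi = \xi$, it is enough to show that $\delta(\gamma) \cdot \pi^h(g_\alpha) \cdot x_0$ also converges to $\xi$ in $X \cup \partial X$; continuity of the $H$-action on $X \cup \partial X$ applied to the convergence $\pi^h(g_\alpha) x_0 \to \xi$ then forces the limit to equal $\delta(\gamma) \cdot \xi$.

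For each fixed $V$, the corollary writes $\delta(\gamma)\pi^h(g_\alpha) = v_\alpha^V \pi^h(g_\alpha) k_\alpha^V$ with $v_\alpha^V \in V$ and $k_\alpha^V \in K_V$. Compactness of $K_V$ together with $\pi^h(g_\alpha)$ being an isometry gives
\[
 d\bigl(\pi^h(g_\alpha) k_\alpha^V x_0, \pi^h(g_\alpha) x_0\bigr) = d(k_\alpha^V x_0, x_0) \leq R_V := \sup_{k \in K_V} d(kx_0, x_0) < \infty,
\]
uniformly in $\alpha$. A standard feature of the cone topology is that a net of points of $X$ staying at bounded distance from a net tending to a boundary point $\xi$ tends to $\xi$ as well, so $\pi^h(g_\alpha) k_\alpha^V x_0 \to \xi$ for every fixed $V$. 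A diagonal argument using joint continuity of the isometric $H$-action on $X \cup \partial X$ then upgrades this to $\delta(\gamma)\pi^h(g_\alpha) x_0 \to \xi$: given an open neighbourhood $U$ of $\xi$, the equality $e \cdot \xi = \xi$ produces a neighbourhood $W$ of $e$ in $H$ and an open neighbourhood $U'$ of $\xi$ with $W \cdot U' \subset U$; choosing $V \subset W$ in the corollary yields $v_\alpha^V \in W$ always, while $\pi^h(g_\alpha) k_\alpha^V x_0 \in U'$ for $\alpha$ large, whence $\delta(\gamma)\pi^h(g_\alpha)x_0 = v_\alpha^V \pi^h(g_\alpha) k_\alpha^V x_0 \in U$.

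The point I would spell out most carefully is the joint continuity of the $H$-action on $X \cup \partial X$ in the cone topology, together with the attendant lemma that bounded perturbations in $X$ preserve limits at the boundary. Both are standard for isometric actions on complete CAT(0) spaces — each isometry extends to a homeomorphism of $X \cup \partial X$ and the extension is jointly continuous with respect to the topology on $H$ — but they are precisely the inputs that make the diagonal argument above succeed, and everything else is a direct unpacking of Corollary \ref{Corollary: Corollary to Two-sided error between T and extension}.
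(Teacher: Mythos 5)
Your proof is correct and follows essentially the same route as the paper's: both hinge on Corollary \ref{Corollary: Corollary to Two-sided error between T and extension}, the fact that a bounded perturbation of a net converging to a boundary point still converges to that point in the cone topology, and continuity of the extended action on $X \cup \partial X$. The only (cosmetic) difference is that the paper passes to a subsequential limit $v_n \to v \in \overline{V}$ to get $\delta(\gamma)\cdot\xi = v\cdot\xi$ and then shrinks $V$, whereas you run the neighbourhood-chasing directly; the continuity inputs you flag are exactly the ones the paper also uses implicitly.
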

 
 \begin{proof}
  Take $\xi \in Y \cap \partial X$ and let $(g_n)_{n\geq 0}$ be a sequence of elements of $G$ such that $\pi^h(g_n)\cdot x_0 \rightarrow \xi$ as $n$ goes to $\infty$. Choose a compact neighbourhood of the identity $V \subset H$ and a compact subset $K \subset H$ given by Corollary \ref{Corollary: Corollary to Two-sided error between T and extension}. For all $n \geq 0$ we can find $v_n \in V$ and $k_n \in H$ such that $\delta(\gamma)\pi^h(g_n) = v_n \pi^h(g_n) k_n$. Note that upon considering sub-sequences we can assume that $v_n \rightarrow v \in \overline{V}$. We now have  $$d_X(\delta(\gamma)\pi^h(g_n) \cdot x_0, v_n \pi^h(g_n) \cdot x_0)= d(x_0, k_n \cdot x_0)$$
  where $d_X$ denotes the distance on $X$. But $\pi^h(g_n)\cdot x_0 \rightarrow \xi \in \partial X$ and $k_n \in K$ compact so $\delta(\gamma)\cdot \xi = v \cdot \xi$. As this holds true for any $V$ we find $\delta(\gamma) \cdot \xi = \xi$. 
 \end{proof}

 Applying the above Proposition \ref{Proposition: Consequence two-sided error in CAT(0) space} to symmetric spaces and Bruhat--Tits buildings we are able to prove the main result of this section:
 
 \begin{proposition}\label{Proposition: Constant cocycles in semi-simple groups}
   Let $l$ be a local field and assume that $H$ is the group of $l$-points of an almost simple algebraic group $\mathbb{H}$ defined over $l$, and that $\pi$ has unbounded Zariski-dense image. Then $\pi^h$ almost extends $T$.
 \end{proposition}
 
\begin{proof}
 Fix $\gamma \in \langle \Lambda \rangle$. Let $X$ be: the symmetric space associated to $\mathbb{H}$ if $l$ is Archimedean, or the Bruhat--Tits building associated to $\mathbb{H}$ otherwise. Since $\pi(G)$ is unbounded and $X \cup \partial X$ is compact, we know by Proposition \ref{Proposition: Consequence two-sided error in CAT(0) space} that $\delta(\gamma)$ fixes point-wise a non-empty closed subset $Y \subset \partial X$ stable under the action of $\pi^h(G)$. But the point-wise stabiliser of $Y$, being the intersection of parabolic subgroups, is a Zariski-closed proper subgroup of $H$ and it is normalised by $\pi^h(G)$. It is thus central, and so $\delta(\gamma)$ is central as well. Since this is true for all $\gamma \in \langle \Lambda \rangle$ we find that $\pi^h$ almost extends $T$.
\end{proof}

 \section{Property (T) for Approximate Subgroups}\label{Section: Property T for Approximate Subgroups}
 
 We will now give a tentative definition of Property (T) for approximate subgroups. Our main goal in doing so is to prove that $\star$-approximate lattices in Kazhdan groups generate a finitely generated subgroup.
 
 \subsection{Definition and First Properties}\label{Subsection: Definition and First Properties}
 Recall that given a locally compact group $G$, a unitary representation $(\pi, \mathcal{H})$ and a subset $Q \subset G$, a $(Q,\epsilon)$-\emph{invariant vector} for some $\epsilon > 0$ is a unit vector $\xi \in \mathcal{H}$ such that $||\pi(g)\xi - \xi || < \epsilon$ for all $g \in Q$. If there are $(Q,\epsilon)$-invariant vector for all compact subsets $Q$ and all $\epsilon > 0$ we say that $\pi$ \emph{almost has invariant vectors}.
 
 \begin{definition}[Property (T) for approximate subgroups]\label{Definition: Property T for approximate subgroups}
  Let $\L$ be an approximate subgroup of some discrete group and $\langle \Lambda \rangle$ the group it generates. We say that $\L$ has \emph{property (T)} if there are a finite subset $Q \subset \L$ and $\epsilon > 0$ such that for any unitary representation $(\pi,\mathcal{H}_{\pi})$ of $\langle \Lambda \rangle$ that has $(Q,\epsilon)$-invariant vectors there is a unit vector $\xi \in \mathcal{H}_{\pi}$ such that $\pi(\L)(\xi)$ is totally bounded.
 \end{definition}
 
  Equivalence between Definition \ref{Definition: Property T for approximate subgroups} and the shorter definition given in the introduction is a clear consequence of the following lemma:
 
 \begin{lemma}\label{Lemma: Compactness criterion for the unitary image of an approximate subgroup} 
  Let $\L$ be an approximate subgroup in some group, let $(\pi, \mathcal{H}_{\pi})$ be a unitary representation of the group $\langle \Lambda \rangle$ it generates and let $\xi \subset \mathcal{H}_{\pi}$ be any vector. The following are equivalent:
  \begin{enumerate}
   \item for all $\delta > 0$ there is an approximate subgroup $\L(\delta,\xi)$ contained in and commensurable to $\L^2$ such that for all $\lambda \in \L(\delta,\xi)$ we have $||\pi(\lambda)(\xi) - \xi|| < \delta$;
   \item the subset $\pi(\L)(\xi)$ is totally bounded;
   \item there is a sub-representation $(\sigma, \mathcal{H}_{\sigma})$ with $\xi \in \mathcal{H}_{\sigma}$, and such that $\sigma(\L)$ is totally bounded in the strong topology.
  \end{enumerate}
  \end{lemma}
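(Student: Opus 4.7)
The plan is to establish the cyclic implications $(1) \Rightarrow (2) \Rightarrow (3) \Rightarrow (1)$. The implication $(1) \Rightarrow (2)$ is immediate: commensurability of $\L(\delta, \xi)$ with $\L^2$ gives a finite subset $F \subset \L^{\infty}$ with $\L \subset F \cdot \L(\delta, \xi)$, so $\pi(\L)(\xi) \subset \pi(F) \cdot B(\xi, \delta)$ is covered by finitely many open balls of radius $\delta$; letting $\delta$ vary shows $\pi(\L)(\xi)$ is totally bounded.

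For $(2) \Rightarrow (3)$, let $V$ be the closed linear span of $\pi(\L^{\infty})\xi$; it is $\pi(\L^{\infty})$-invariant and contains $\xi$, defining a sub-representation $\sigma := \pi|_V$. I would show $\sigma(\L)$ is relatively compact in the strong operator topology on $U(V)$. From $\L^k \subset F_k \L$ we get that $\pi(\L^k)(\xi) \subset \pi(F_k)\pi(\L)(\xi)$ is totally bounded, hence separable, for each $k$, so $\pi(\L^{\infty})(\xi) = \bigcup_k \pi(\L^k)(\xi)$ admits a countable dense subset $\{\pi(\gamma_i)\xi\}_{i \geq 1}$. Given any sequence $(\lambda_n) \subset \L$, the vectors $\pi(\lambda_n \gamma_i)\xi$ lie in the totally bounded set $\pi(\L^{k_i+1})(\xi)$ for each fixed $i$, so a diagonal extraction produces a subsequence along which $\sigma(\lambda_{n_j}) \pi(\gamma_i)\xi$ converges for every $i$. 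The uniform bound $\|\sigma(\lambda_{n_j})\| = 1$ extends this convergence by density to a strong limit $T$ on $V$, necessarily an isometry. Running the same extraction on $(\lambda_{n_j}^{-1}) \subset \L$ (using $\L = \L^{-1}$) produces a further limit $T'$ satisfying $TT' = T'T = \mathrm{Id}_V$, so $T \in U(V)$. The delicate point is this last step: uniform boundedness is what lets convergence pass from a total subset to all of $V$, and the symmetry $\L = \L^{-1}$ is essential for upgrading the isometric limit to a unitary one, lest one only obtain a semigroup of isometries.

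For $(3) \Rightarrow (1)$, let $K := \overline{\sigma(\L)} \subset U(V)$, compact in the strong topology. For $\delta > 0$, let $U := \{u \in U(V) : \|u\xi - \xi\| < \delta/2\}$, a symmetric open neighborhood of the identity; by the triangle inequality $U^2 \subset \{u : \|u\xi - \xi\| < \delta\}$. Compactness of $K$ and density of $\sigma(\L)$ in $K$ yield a cover $K \subset \bigcup_{f \in F_0} \sigma(f) U$ with $F_0 \subset \L$ finite. Set $\L(\delta, \xi) := \L^2 \cap \sigma^{-1}(U^2)$: it is symmetric, contains $e$, lies in $\L^2$, and every $\lambda$ in it satisfies $\|\pi(\lambda)\xi - \xi\| < \delta$. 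For $\lambda \in \L$, pick $f \in F_0$ with $\sigma(f^{-1}\lambda) \in U$; then $f^{-1}\lambda \in \L^2 \cap \sigma^{-1}(U) \subset \L(\delta, \xi)$, so $\L \subset F_0 \L(\delta, \xi)$. Iterating via $\L^n \subset F_n \L$ yields $\L^n \subset F_n F_0 \L(\delta, \xi)$ for every $n$: the case $n = 2$ gives commensurability with $\L^2$, and the case $n = 4$ applied to $\L(\delta, \xi)^2 \subset \L^4$ gives the approximate subgroup property.
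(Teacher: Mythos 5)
Your proof is correct and follows essentially the same route as the paper: the identical covering argument for $(1)\Rightarrow(2)$, the same sub-representation generated by $\xi$ with a sequential-compactness argument for $(2)\Rightarrow(3)$ (the paper invokes Arzel\`a--Ascoli where you do the diagonal extraction by hand), and the same finite-cover construction of $\L(\delta,\xi)=\L^2\cap\sigma^{-1}(U^2)$ for $(3)\Rightarrow(1)$. Your extra step checking via $\L=\L^{-1}$ that the strong limits are genuinely unitary, not merely isometric, is a point the paper glosses over and is a worthwhile refinement.
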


 \begin{proof}
  Let us start with (1) $\Rightarrow$ (2). Take $\delta >0$, the subset $\pi(\L(\delta,\xi)) (\xi)$ is contained in $B_{\mathcal{H}_{\pi}}(\xi, \delta)$ the ball of centre $\xi$ and radius $\delta$. But there is a finite subset $F_{\delta} \subset \L^3$ such that $\L \subset F_{\delta} \L(\delta,\xi)$ so $\pi(\L)(\xi) \subset \pi(F_{\delta})(B_{\mathcal{H}_{\pi}}(\xi, \delta)) \subset \bigcup_{f\in F_{\delta}} B_{\mathcal{H}_{\pi}}(\pi(f)(\xi),\delta)$. Therefore, $\pi(\L)(\xi)$ is covered by finitely many balls of radius $\delta$. The set $\pi(\L)(\xi)$ is thus totally bounded.
  
  Notice now that for any $\gamma \in \langle \Lambda \rangle$ there is $F_{\gamma} \subset \langle \Lambda \rangle$ finite such that $\L\gamma  \subset F_{\gamma}\L$. So $\pi(\L)(\pi(\gamma)(\xi))$ is totally bounded as well. Let $(\sigma,\mathcal{H}_{\sigma})$ denote the sub-representation of $\pi$ generated by $\xi$ i.e.  $\mathcal{H}_{\sigma}$ is the closure of the linear span of $\pi(\langle \Lambda \rangle)(\xi)$. We know that $\pi(\L)(\xi)$ contains a countable dense subset (it is totally bounded in a metric space), so we readily check that $\mathcal{H}_{\sigma}$ contains a countable dense subset $(\xi_n)_{n\geq 0}$ such that $\sigma(\L)(\xi_n)$ is relatively compact for all $n\geq 0$. By the Arzela-Ascoli theorem, since $\sigma(\L)$ is obviously a set of equicontinuous operators, we know that $\sigma(\L)$ is relatively compact in the point-wise topology. So (2) $\Rightarrow$ (3).
  
    Finally (3) $\Rightarrow$ (1): choose $\delta > 0$ and let $V_{\delta}(\xi)$ be the open subset of $U(\mathcal{H}_{\sigma})$ defined by $\{ T \in U(\mathcal{H}_{\sigma}) \left|\ ||T(\xi) - \xi || < \delta \right.\}$. Then $V_{\delta}(\xi)$ is symmetric and $V_{\delta}(\xi)^2 \subset V_{2\delta}(\xi)$. But $\sigma(\L)$ is covered by finitely many left-translates of $V_{\delta}(\xi)$. Hence, by the usual argument $\L^{2} \cap V_{2\delta}(\xi)$ is an approximate subgroup commensurable to $\L$ (see e.g. \cite[\S 2]{machado2019goodmodels}). So $\L(2\delta,\xi):= \L^{2} \cap V_{2\delta}(\xi)$ works.
 \end{proof}

 \begin{remark}
  As the proof shows, or since compact metric subsets are separable, we can always find $\sigma$ as in (2) such that $\mathcal{H}_{\sigma}$ is separable. 
 \end{remark}

 \subsection{Links with Relative Property (T)} For illustration purposes, we relate Definition \ref{Definition: Property T for approximate subgroups} to the relative property (T) of the pair $(\langle \Lambda \rangle,\L)$ (see \cite{MR2245534}). Note that we assume here that $\L$ is countable, but we will later see that this is always true (Proposition \ref{Proposition: Approximate subgroups with property T are finitely generated}).
 
 \begin{proposition}
  Let $\L$ be a countable approximate subgroup of some group. We have:
  \begin{enumerate}
   \item if $\L$ has property (T), then $(\langle \Lambda \rangle,\L)$ has the relative property (T);
   \item conversely, if $\L$ generates a lattice in an algebraic group over a local field and $(\langle \Lambda \rangle,\L)$ has the relative property (T), then $\L$ has property (T);
   \item if $G$ is a locally compact group, $N$ is a closed normal subgroup, $(G,N)$ has relative property (T), and $\Gamma \subset G$ is a lattice, then there is a compact subset $K \subset G$ such that for any compact neighbourhood of the identity $W$ containing $K$ the approximate subgroup $\Gamma \cap NW$ has property (T).
  \end{enumerate}
 \end{proposition}
 
 The proof follows from standard methods as found in \cite{MR2245534, MR2415834}. We do not include a proof since we do not use these facts in the following. Note moreover that it would be interesting to know if a general converse to (1) is true.

 \subsection{Heredity}
 We now turn to the heart of Section \ref{Section: Property T for Approximate Subgroups}: we will prove a certain heredity result about property (T) for $\star$-approximate lattices.
 
 \begin{proposition}\label{Proposition: Heredity of property (T)}
  Let $\L$ be a $\star$-approximate lattice in a locally compact second countable group $G$. The following are equivalent:
  \begin{enumerate}
   \item $G$ has property (T);
   \item $\L$ has property (T) for approximate subgroups.
  \end{enumerate}
 \end{proposition}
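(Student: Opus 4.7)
The plan is to establish the two implications of the equivalence using the cocycle induction framework developed in Section \ref{Section: Cocycles and star-Approximate Lattices}. Fix once and for all a proper $G$-invariant ergodic Borel probability measure $\nu_0$ on $\Omega_{\L}^{ext}$, a Borel section $s:\Omega_{\L}^{ext}\setminus\{\emptyset\}\to G$ (Proposition \ref{Proposition: Extended hulls of uniformly discrete subsets have Borel sections}), and the associated cocycle $\alpha_s$ (Lemma \ref{Lemma: Cocycle associated to a Borel section}), which takes values in $\L^2$. Given any unitary representation $\pi$ of $\L^\infty$, define the induced unitary representation $\tilde\pi$ of $G$ on $L^2(\Omega_{\L}^{ext},\nu_0;\mathcal{H}_\pi)$ by $(\tilde\pi(g)f)(X)=\pi(\alpha_s(g,g^{-1}X))f(g^{-1}X)$; the cocycle identity makes this into a unitary representation.

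For the direction $(1)\Rightarrow(2)$, I would start with a representation $\pi$ of $\L^\infty$ having a $(Q,\epsilon)$-invariant unit vector $\xi$, and test the constant function $f_\xi\equiv\xi$ in $\tilde\pi$. A direct computation gives $\|\tilde\pi(g)f_\xi-f_\xi\|^2=\int_\Omega \|\pi(\alpha_s(g,g^{-1}X))\xi-\xi\|^2\,d\nu_0(X)$. Given a Kazhdan pair $(K_G,\epsilon_G)$ for $G$, the countability of $\L^2$ and $\sigma$-additivity of $\nu_0$ allow one to choose a finite subset $Q\subset\L$ (absorbing the relevant values of $\alpha_s$ on $K_G$, up to the approximate-subgroup covering $\L^2\subset F\L$) and $\epsilon>0$ small enough that $(Q,\epsilon)$-invariance of $\xi$ makes $f_\xi$ into a $(K_G,\epsilon_G)$-invariant vector. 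Property (T) of $G$ then produces a non-zero $\tilde\pi$-invariant vector $f$, satisfying $f(gX)=\pi(\alpha_s(g,X))f(X)$ $\nu_0$-a.e.; applying Lemma \ref{Lemma: Cohomology reduction lemma 2} to this cocycle-equivariant map (with cocycle ranging in $\pi(\L^2)$) yields $\xi_0\in\mathcal{H}_\pi$ such that $f(X)\in\pi(\L^2)B(\xi_0,\epsilon')$ $\nu_0$-a.e.\ for every $\epsilon'>0$. A covering argument inspired by Proposition \ref{Proposition: Rusza's covering Lemma for cocycles}, combined with the $L^2$-integrability of $f$ and the ergodicity of $\nu_0$, then forces $\pi(\L^2)\xi_0$ itself to be totally bounded, so $\pi(\L)\xi_0$ is totally bounded, verifying condition $(1)$ of Definition \ref{Definition: Property T for approximate subgroups}.

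For the direction $(2)\Rightarrow(1)$, given a unitary representation $\rho$ of $G$ that almost has invariant vectors, the restriction $\pi:=\rho_{|\L^\infty}$ inherits $(Q,\epsilon)$-invariant vectors for the Kazhdan pair $(Q,\epsilon)$ of $\L$. Property (T) of $\L$ yields a sub-representation $(\sigma,\mathcal{H}_\sigma)$ of $\pi$ with $\sigma(\L)$ compact in $U(\mathcal{H}_\sigma)$; unitarity implies every $\sigma(\L^\infty)$-orbit is bounded, and the minimum-norm element of the closed convex hull of any such orbit is a non-zero $\L^\infty$-invariant vector $\zeta\in\mathcal{H}_\sigma\subset\mathcal{H}_\rho$. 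To upgrade from $\L^\infty$-invariance to $G$-invariance, observe that $\rho$ embeds $G$-equivariantly into $\tilde\pi$ by $v\mapsto(X\mapsto\rho(s(X))^{-1}v)$, so that $\tilde\pi$-invariants contain both constants arising from $\pi$-invariants and the pushed-forward $\rho$-invariants; combining this identification with the almost-invariance of $\rho$, the bi-syndeticity $G=V\L K$ from Corollary \ref{Corollary: Star-approximate lattices are bi-syndetic}, and the finite co-volume fundamental set of Proposition \ref{Proposition: star-Approximate lattices have finite co-volume}, one shows that $\zeta$ must in fact be $\rho$-invariant.

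The main obstacle lies in the two extraction steps: for $(1)\Rightarrow(2)$, passing from the essential range of the invariant vector $f$ being contained in $\overline{\pi(\L^2)\xi_0}$ to total boundedness of $\pi(\L)\xi_0$, which requires translating the ergodicity-plus-$L^2$-integrability into a counting estimate in the spirit of the periodization-based arguments of Subsection \ref{Subsection: The Periodization Map}; and for $(2)\Rightarrow(1)$, the delicate upgrade from $\L^\infty$-invariance to $G$-invariance, which relies crucially on the $\star$-approximate lattice structure (finite co-volume fundamental set plus bi-syndeticity) rather than being formal.
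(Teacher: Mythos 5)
Your direction $(1)\Rightarrow(2)$ follows the paper's proof essentially verbatim: induce to $L^2(\Omega_{\L}^{ext},\nu_0;\mathcal{H}_\pi)$ via the cocycle $\alpha_s$, transfer almost-invariant vectors as in Zimmer, extract an invariant $\phi$, apply Lemma \ref{Lemma: Cohomology reduction lemma 2} and then the covering argument of Proposition \ref{Proposition: Rusza's covering Lemma for cocycles} to get $\pi(\L)\subset\pi(F)V_{2\delta}(\xi)$; this is exactly what the paper does, and your conclusion (condition (1) of Definition \ref{Definition: Property T for approximate subgroups}) is equivalent to the paper's (condition (2)) by Lemma \ref{Lemma: Compactness criterion for the unitary image of an approximate subgroup}.

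The direction $(2)\Rightarrow(1)$, however, contains a genuine error. You claim that since every $\sigma(\L^{\infty})$-orbit is bounded, the minimum-norm element of the closed convex hull of an orbit is a \emph{non-zero} $\L^{\infty}$-invariant vector. In a unitary representation every orbit lies on a sphere and is therefore bounded, so this argument, if valid, would produce a non-zero invariant vector in every unitary representation of every group. The minimum-norm element of the closed convex hull is indeed invariant, but it is typically $0$: take $\L^{\infty}=\Z$ acting on $\C$ by an irrational rotation; $\sigma(\L)$ is relatively compact, yet the closed convex hull of the orbit of any $v\neq 0$ is the full disc and its minimum-norm point is $0$, and there is no non-zero invariant vector. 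So from ``$\sigma(\L)$ relatively compact'' you cannot extract an $\L^{\infty}$-invariant vector, and consequently the subsequent ``upgrade to $G$-invariance'' has nothing to act on. The circumcenter/convex-hull trick belongs to affine isometric actions (property (FH)), where boundedness of an orbit is a non-trivial hypothesis; it is vacuous for unitary actions.

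What the paper does instead avoids invariant vectors altogether. Starting from a unit vector $\xi$ with $\pi(\L(\delta/3,\xi))\xi\subset B(\xi,\delta/3)$, it picks a small neighbourhood $V$ of $e$ in $G$ with $\pi(V)\xi\subset B(\xi,\delta/3)$ and uses Corollary \ref{Corollary: Star-approximate lattices are bi-syndetic} to cover $G$ by finitely many translates of $V\L(\delta/3,\xi)V$; Lemma \ref{Lemma: Compactness criterion for the unitary image of an approximate subgroup} then yields a sub-representation $\sigma$ with $\sigma(G)$ relatively compact in the strong topology. Peter--Weyl gives a finite-dimensional sub-representation, and one concludes with the characterisation of property (T) as ``every representation almost having invariant vectors has a finite-dimensional sub-representation'' (\cite[Th. 2.12.4]{MR2415834}). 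You should replace your convex-hull step by this bi-syndeticity plus Peter--Weyl argument; the fundamental-set and induced-representation machinery you invoke at the end is not needed for this direction.
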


 \begin{proof}
  We begin with (2) $\Rightarrow$ (1) as the proof will use the ideas of Subsection \ref{Subsection: Definition and First Properties}. Let $(Q,\epsilon)$ be a Kazhdan pair for $\L$ and let $(\pi, \mathcal{H}_{\pi})$ be a unitary representation of $G$ that admits $(Q,\epsilon)$-invariant vectors. Let $\xi \in \mathcal{H}_{\pi}$ with $\pi(\Lambda)(\xi)$ totally bounded and $(\L(\xi, \delta))_{\delta \geq 0}$ be as in Lemma \ref{Lemma: Compactness criterion for the unitary image of an approximate subgroup} (1) and take $\delta > 0$. Take $V \subset G$  open such that $\pi(V) \subset V_{\delta/3}:= \{ T \in U(\mathcal{H}_{\sigma}) \left|\ ||T(\xi) - \xi || < \delta/3 \right.\}$. By Corollary \ref{Corollary: Star-approximate lattices are bi-syndetic} a finite number of left-translates of the subset $X(\delta,\xi) := V\L(\delta/3,\xi)V$ cover $G$. But then for all $g \in  X(\delta,\xi)$ we have $||\pi(g)\xi - \xi|| \leq \delta$. So there is a sub-representation  $(\sigma, \mathcal{H}_{\sigma})$ such that $\sigma(G)$ is relatively compact by Lemma \ref{Lemma: Compactness criterion for the unitary image of an approximate subgroup}. According to the Peter--Weyl theorem $\sigma$, and hence $\pi$, has a finite dimensional sub-representation. But that means that $G$ has property (T) according to the characterisation of property (T) from \cite[Th.2.12.4]{MR2415834}. 
  
  Conversely, let $\nu$ be a proper $G$-invariant ergodic measure on $\Omega_{\L,G}^{ext}$, let $(Q,\epsilon)$ be a Kazhdan pair for $G$ and let $(\pi, \mathcal{H}_{\pi})$ be a unitary representation of $\langle \Lambda \rangle$. Note that since $G$ is $\sigma$-compact (\cite[Th.1.3.1]{MR2415834}), $\langle \Lambda \rangle$ is countable. So we can assume that $\mathcal{H}_{\pi}$ is separable. Take a Borel section $s: \Omega_{\L,G}^{ext} \rightarrow G$ as in Definition \ref{Definition: Section of set of closed subsets} and let $\alpha_s: G \times \Omega_{\L,G}^{ext} \rightarrow \langle \Lambda \rangle$ be the Borel cocycle defined by $g,X \mapsto s(gX)^{-1}gs(X)$ (Lemma \ref{Lemma: Cocycle associated to a Borel section}). Thanks to the cocycle identity $\alpha_s(gh,X)=\alpha_s(g,hX)\alpha_s(h,X)$ we can define a unitary representation $\sigma$ of $G$ on $\mathcal{H}_{\sigma}:=L^2(\Omega_{\L,G}^{ext},\mathcal{H}_{\pi}; \nu)$ by $$\sigma(g)(f)( X) = \pi(\alpha_s(g^{-1},X)^{-1})(f(g^{-1}X))$$ (see e.g. \cite{zimmer2013ergodic} for this and more). Arguing exactly as in \cite[Th. 9.1.1]{zimmer2013ergodic} we show that there are a finite subset $Q' \subset \L$  and $\epsilon' >0$ - independent of $\pi$ - such that if $(\pi, \mathcal{H}_{\pi})$ has a $(Q',\epsilon')$-invariant vector, then $(\sigma,\mathcal{H}_{\sigma}) $ has a $(Q,\epsilon)$-invariant vector. So suppose from now on that $(\pi, \mathcal{H}_{\pi})$ has a $(Q',\epsilon')$-invariant vector. Since $(Q,\epsilon)$ is a Kazhdan pair for $G$ there is $\phi \in L^2(\Omega_{\L,G}^{ext},\mathcal{H}_{\pi}; \nu)$ with norm $1$ such that $\sigma(g)(\phi)=\phi$ for all $g \in G$. Therefore, for all $g \in G$ and $\nu$-almost all $X \in \Omega_{\L,G}^{ext}$ we have 
  $$\pi(\alpha_s(g^{-1},X)^{-1})(\phi(g^{-1}X)) = \phi(X).$$
  Note first that by ergodicity of the action of $G$ on $\Omega_{\L,G}^{ext}$ and since $\phi$ has norm $1$ we have that $\phi(X)$ has norm $1$ in $\mathcal{H}_{\pi}$ for $\nu$-almost all $X \in \Omega_{\L,G}^{ext}$. We will now proceed as in Section \ref{Section: Cocycles and star-Approximate Lattices} to produce a unit vector $\xi \in \mathcal{H}_{\pi}$ and a sequence of approximate subgroups $(\L(\xi, \delta))_{\delta > 0}$ commensurable to $\L$ such that $\pi(\L(\xi, \delta))(\xi) \subset B_{\mathcal{H}_{\pi}}(\xi, \delta)$  where $B_{\mathcal{H}_{\pi}}(\xi, \delta)$ is the ball centred at $\xi$ and of radius $\delta$ in $\mathcal{H}_{\pi}$. By Lemma \ref{Lemma: Cohomology reduction lemma 2} and since $\alpha_s$ takes values in $\L^2$ there is $\xi \in \mathcal{H}_{\pi}$ with norm $1$ such that for all $\delta > 0$ and $\nu$-almost all $X \in \Omega_{\L,G}^{ext}$ we have $\phi(X) \in \pi(\L^2)(B_{\mathcal{H}_{\pi}}(\xi, \delta))$. As in Lemma \ref{Lemma: First reduction of the cocycle T circ alpha} we will build $f_{\delta}: \Omega_{\L,G}^{ext} \rightarrow G$ and $\alpha_{f_{\delta}}$ defined for all $g \in G$ and almost all $X \in \Omega_{\L,G}^{ext}$ by $\alpha_{f_{\delta}}(g,X):=f_{\delta}(gX)^{-1}gf_{\delta}(X)$ such that $f_{\delta}$ is Borel, $\alpha_{f_{\delta}}$ takes values in $\L^6$ and $\pi\circ \alpha_{f_{\delta}}$ takes values in $V_{\delta}(\xi):=\{T \in U(\mathcal{H}_{\pi})\left| \ ||T( \xi) - \xi || < 2\delta\right.\}$. Choose an enumeration $(\lambda_n)_{n\geq 0}$ of $\Lambda^2$ and for every $X$ such that $\phi(X) \in \pi(\L^2)(B_{\mathcal{H}_{\pi}}(\xi, \delta))$ set $$n(X):=\inf \{n \in \mathbb{N} | \phi(X) \in \pi(\lambda_n)(B_{\mathcal{H}_{\pi}}(\xi, \delta))\}.$$ Then the map $h_{\delta}: X \mapsto \lambda_{n(X)}^{-1}$ is well-defined $\nu$-almost everywhere and Borel. Now, for all $g \in G$ and $\nu$-almost all $X$, $h_{\delta}$ is defined at both $X$ and $g^{-1}X$. Thus, $\pi(h_{\delta}(X))( \phi(X) )\in B_{\mathcal{H}_{\pi}}(\xi, \delta)$ and $\pi(h_{\delta}(g^{-1}X)) (\phi(g^{-1}X)) \in B_{\mathcal{H}_{\pi}}(\xi, \delta)$. But 
  $$\pi(\alpha_s(g^{-1},X)^{-1})(\phi(g^{-1}X)) = \phi(X),$$
  so 
  $$\pi(\alpha_{f_{\delta}}(g^{-1},X)^{-1})(\pi(h_{\delta}(g^{-1}X))(\phi(g^{-1}X))) = \pi(h_{\delta}(X))(\phi(X))$$
  where $f_{\delta}(X):= s(X)h_{\delta}(X)^{-1}$. In other words, for all $g\in G$ and $\nu$-almost all $X$, $$\pi(\alpha_{f_{\delta}}(g^{-1},X)^{-1})(B_{\mathcal{H}_{\pi}}(\xi, \delta)) \cap B_{\mathcal{H}_{\pi}}(\xi, \delta) \neq \emptyset.$$ This implies $\pi(\alpha_{f_{\delta}}(g^{-1},X)^{-1}) \in V_{\delta}$. 
  
   Now, according to Proposition \ref{Proposition: Ruzsa's covering Lemma for cocycles} applied to $\alpha_{f_{\delta}}$ and the constant map $\Delta: X \mapsto V_{\delta}(\xi)$ we find that there is $F \subset \L$ finite such that $\pi(\L) \subset V_{2\delta}(\xi)\pi(F)$. So $\L(2\delta,\xi):= \L^2 \cap \pi^{-1}(V_{2\delta}(\xi))$ and $\xi$ are as in Lemma \ref{Lemma: Compactness criterion for the unitary image of an approximate subgroup} (1) and, hence, $\Lambda$ has property $(T)$. 
 \end{proof}

 \subsection{Finite Generation}
 It is well-known that property (T) for groups implies finite generation (see e.g. \cite{MR2415834}). We will show in the same spirit that: 
 \begin{proposition}\label{Proposition: Approximate subgroups with property T are finitely generated}
  If an approximate subgroup $\L$ of some discrete group has property (T) then the subgroup $\langle \Lambda \rangle$ it generates is finitely generated. More precisely, if $(Q,\epsilon)$ is any Kazhdan pair, then  $\L$ is covered by finitely many left translates of the subgroup $\Delta$ generated by $Q$.
 \end{proposition}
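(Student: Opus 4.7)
My plan is to mimic the classical argument that discrete groups with property (T) are finitely generated, replacing the invariant vector produced by ordinary property (T) with the ``totally bounded orbit'' conclusion of Definition~\ref{Definition: Property T for approximate subgroups}. Set $\Delta := \langle Q \rangle$ and consider the quasi-regular unitary representation $\pi$ of $\L^{\infty}$ on $\mathcal H := \ell^2(\L^{\infty}/\Delta)$ defined by $\pi(g)f(x\Delta) = f(g^{-1}x\Delta)$. The characteristic function $\delta_{\Delta}$ of the trivial coset is a unit vector fixed pointwise by $\pi(Q)$ (because $Q \subset \Delta$), so it is $(Q,\epsilon)$-invariant. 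Property (T) applied to this representation therefore produces, via condition (1), a unit vector $\xi \in \mathcal H$ such that $\pi(\L)\xi$ is totally bounded in $\mathcal H$.

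Next I would translate total boundedness in $\ell^2$ into a uniform equi-summability of tails: for every $\eta > 0$ there exists a finite set of cosets $F_\eta \subset \L^{\infty}/\Delta$ with
\[
\sum_{g\Delta \notin F_\eta} |\xi(\lambda^{-1}g\Delta)|^2 < \eta^2 \qquad \text{for every } \lambda \in \L.
\]
Specialising to $\lambda = e$ shows that $\xi$ is $\eta$-close to being supported on $F_\eta$; changing variable via $h\Delta = \lambda^{-1}g\Delta$ shows the same estimate relative to $\lambda^{-1}F_\eta$. For $\eta$ sufficiently small (say $\eta < 1/3$) these two essential supports must intersect: otherwise $\xi$ and $\pi(\lambda)\xi$ would both be nearly unit vectors with almost disjoint supports, incompatible with their belonging to a single totally bounded family. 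Hence $F_\eta \cap \lambda^{-1}F_\eta \neq \emptyset$ for every $\lambda \in \L$.

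Fixing representatives $h_1, \ldots, h_N \in \L^{\infty}$ of the cosets in $F_\eta$, the non-emptiness of $F_\eta \cap \lambda^{-1}F_\eta$ yields, for each $\lambda \in \L$, indices $i,j$ with $\lambda h_i \Delta = h_j \Delta$; equivalently, $\lambda \in h_j \Delta h_i^{-1}$. With $F := \{h_1, \ldots, h_N\}$ this gives $\L \subset F \Delta F^{-1}$, from which $\L^{\infty}$ is generated by the finite set $F \cup Q$; this already proves finite generation. To upgrade the conclusion to $\L \subset F' \Delta$ in the statement, I would arrange that the coset $\Delta$ itself belongs to the essential support of $\xi$, so that one may take $h_i = e$ in the argument above and conclude $\lambda\Delta \in F_\eta$ directly. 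I would engineer this by replacing $\xi$ by a vector built from the projection of $\delta_\Delta$ onto the sub-representation $(\sigma,\mathcal H_\sigma)$ supplied by condition (3) of Definition~\ref{Definition: Property T for approximate subgroups}; this projection is non-zero because $\delta_\Delta$ is cyclic for $\pi(\L^{\infty})$ on $\mathcal H$. The main technical obstacle is exactly this final alignment: the totally bounded vector produced by property (T) need not resemble $\delta_\Delta$, so some extra work is required to guarantee that the trivial coset sits among its ``large'' coordinates.
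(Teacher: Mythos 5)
Your argument is correct, and for the finite-generation half it is complete; it takes a slightly different route from the paper's. The paper invokes clause (2) of Definition~\ref{Definition: Property T for approximate subgroups}: it obtains a vector $\phi\in\ell^2(\L^{\infty}/\Delta)$ together with approximate subgroups $\L(\delta,\phi)$ commensurable to $\L$ on which $\phi$ is $\delta$-almost invariant, picks a coset $\gamma\Delta$ with $\phi(\gamma\Delta)=\alpha>0$, deduces that $\lambda^{-1}\gamma\Delta$ lies in the finite set $\{|\phi|\geq\alpha/2\}$ for all $\lambda\in\L(\alpha/2,\phi)$, and finally covers $\L$ by finitely many translates of $\L(\alpha/2,\phi)$. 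You instead invoke clause (1)/(3) and use the characterisation of relatively compact subsets of $\ell^2$ by uniformly small tails; the two are interchangeable by Lemma~\ref{Lemma: Compactness criterion for the unitary image of an approximate subgroup}. Your route has a genuine advantage for the refined statement: evaluating at an arbitrary coset $\gamma\Delta$, as the paper does, yields containments of the shape $\gamma\Delta F^{-1}$, i.e.\ translates of a conjugate of $\Delta$ rather than left translates of $\Delta$ itself, so the alignment issue you isolate at the end is exactly the point one must address to get the precise form of the conclusion.

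The ``main technical obstacle'' you flag is in fact already dispatched by the fix you propose. Let $P$ be the orthogonal projection onto $\mathcal H_{\sigma}$ and $\xi_0:=P\delta_{\Delta}$, where $\delta_{\Delta}$ is the indicator of the trivial coset; $\xi_0\neq 0$ by cyclicity of $\delta_{\Delta}$, and $\xi_0(\Delta)=\langle P\delta_{\Delta},\delta_{\Delta}\rangle=\|\xi_0\|^2$, so after normalising one has $\xi_0(\Delta)=\alpha:=\|P\delta_{\Delta}\|>0$. Now apply your uniform tail bound to the totally bounded family $\pi(\L)\xi_0$ with $\eta<\alpha$: since $(\pi(\lambda)\xi_0)(\lambda\Delta)=\xi_0(\Delta)=\alpha$, the coset $\lambda\Delta$ cannot lie outside $F_{\eta}$, as the tail of $\pi(\lambda)\xi_0$ outside $F_{\eta}$ would then already be at least $\alpha^2>\eta^2$. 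Hence $\lambda\Delta\in F_{\eta}$ for every $\lambda\in\L$, i.e.\ $\L\subset\tilde F_{\eta}\Delta$ for a set of representatives $\tilde F_{\eta}$, which is the stated refinement. In particular no intersection-of-supports argument is needed for this half; the pointwise evaluation at $\lambda\Delta$ closes it directly.
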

 
 \begin{proof}
  Let $(Q,\epsilon)$ be a Kazhdan pair and let $\Delta$ denote the subgroup generated by $Q$. Then the indicator function $\mathds{1}_{\Delta}$ is a $(Q,\epsilon)$-invariant vector of the quasi-regular representation $(\pi, L^2(\langle \Lambda \rangle/\Delta))$. So, by Lemma \ref{Lemma: Compactness criterion for the unitary image of an approximate subgroup}, we can find $\phi \in L^2(\langle \Lambda \rangle/\Delta)$ with norm $1$ and $(\L(\delta, \phi))_{\delta > 0}$ a family of approximate subgroups contained in $\L^2$ and commensurable to $\L$ such that $||\pi(\lambda)(\phi)-\phi|| < \delta$ for all $\delta >0$ and $\lambda \in \L(\delta,\phi)$. Now let $p: \langle \Lambda \rangle \rightarrow \langle \Lambda \rangle/\Delta$ denote the natural projection. Take $\gamma \in \langle \Lambda \rangle$ such that $\phi(p(\gamma))=\alpha > 0$. So for all $\lambda \in \L(\alpha/2,\phi)$ we have 
  $| \phi(p(\lambda^{-1}\gamma))- \phi(p(\gamma))| \leq ||\pi(\lambda)(\phi)-\phi ||< \alpha/2,$
  meaning $p\left(\lambda^{-1}\gamma\right) \in \phi^{-1}([\alpha/2; +\infty))$. Since $\phi^{-1}([\alpha/2; +\infty))$ is finite, we can find a finite set $F$ of representatives of $\phi^{-1}([\alpha/2; +\infty))$ in $\langle \Lambda \rangle$. Then $\lambda^{-1} \gamma \Delta \cap F \Delta \neq \emptyset$ and $\L(\alpha/2,\phi)$ is contained in $F\Delta\gamma^{-1}$. But $\gamma^{-1}\Lambda(\alpha/2,\phi)\gamma$ is commensurable with $\Lambda$. So there is a finite subset $F' \subset \langle \Lambda \rangle$ such that $\L \subset F'\gamma^{-1}\L(\alpha/2, \phi)\gamma$. Thus, 
  $$\L \subset F'\gamma^{-1}\L(\alpha/2, \phi)\gamma  \subset F'\gamma^{-1}F\Delta.$$ 
 \end{proof}

 As a corollary, we prove Theorem \ref{Theorem: Approximate lattices in property T groups are finitely generated}:

 \begin{proof}[Proof of Theorem \ref{Theorem: Approximate lattices in property T groups are finitely generated}.]
  According to Proposition \ref{Proposition: Heredity of property (T)} we know that $\L$ has property (T). So $\langle \Lambda \rangle$ is finitely generated as a consequence of Proposition \ref{Proposition: Approximate subgroups with property T are finitely generated}.
 \end{proof}

 \section{Superrigidity and Arithmeticity}\label{Section: Superrigidity and Arithmeticity}
 From now on let $A$ be a finite set, let $(k_{\alpha})_{\alpha\in A}$ be a family of local fields of characteristic $0$ and let $(\G_{\alpha})_{\alpha \in A}$ be a family of almost simple algebraic groups defined over $k_{\alpha}$ with $k_{\alpha}$-rank $\geq 2$. We will moreover suppose that the $\G_{\alpha}$'s are \emph{absolutely} almost simple. For any subset $B \subset A$ set $G_B:=\prod_{\alpha \in B} \G_{\alpha}(k_\alpha)$ and let $p_B: G_A \rightarrow G_B$ denote the natural map. Moreover, for all $\alpha \in A$ let $G_{\alpha}$ denote $\G_{\alpha}(k_{\alpha})$ and let $p_{\alpha}$ denote $p_{\{\alpha\}}$.
 
 \subsection{Superrigidity in Bounded Dimension}
  For any local field $l$ of characteristic $0$ we let $p(l)$ denote the unique element of $\mathcal{P}\cup \{\infty\}$ (the set of prime numbers together with $\{\infty\}$) such that $l$ is a finite extension of the $p(l)$-adic field $\Q_{p(l)}$ (where $\Q_{\infty}:=\R$). 
 
 \begin{proposition}\label{Proposition: Partial superrigidity result}
  Let $\L$ be a $\star$-approximate lattice in $G_A$ and $T: \langle \Lambda \rangle \rightarrow \mathbb{H}(l)$ be a group homomorphism towards the $l$-points of an affine $l$-group $\mathbb{H}$. We have:
  \begin{enumerate}
   \item if $p(k_{\alpha})\neq p(l)$ for all $\alpha \in A$, then $T(\L)$ is a relatively compact subset of $\mathbb{H}(l)$;
   \item if $\mathbb{H}$ is an absolutely almost simple algebraic group defined over $l$, $T(\L)$ is not relatively compact in $\mathbb{H}(l)$ and for every $\alpha \in A$ such that $p(k_{\alpha})=p(l)$, $\dim(\G_{\alpha}) \geq \dim(\mathbb{H})$, then there is a continuous group homomorphism $\pi: G_A \rightarrow \mathbb{H}(l)$ that almost extends $T$.  
  \end{enumerate}
 \end{proposition}

 \begin{proof}
 Let $\nu$ be an ergodic proper $G$-invariant Borel probability measure on $\Omega_{\L}^{ext}$. Let $s$ be a Borel section of $\Omega_{\L}^{ext}$ and $\alpha_s(g,X):=s(gX)^{-1}gs(X)$ for all $g\in G$ and $\nu$-almost all $X \in \Omega_{\L}^{ext}$. Consider the Borel cocycle $T\circ \alpha_s :G_A \times \Omega_{\L}^{ext} \rightarrow \mathbb{H}(l)$. Let $\mathbb{L}$ be the algebraic hull of $T\circ \alpha_s$ (\cite[Prop. 9.2.1]{zimmer2013ergodic}) and let $\beta : G_A \times \Omega_{\L}^{ext} \rightarrow \mathbb{H}(l)$ be a Borel cocycle cohomologous to $T\circ \alpha_s$ that takes values in $\mathbb{L}(l)$. Let $\mathbb{F} \ltimes \mathbb{U}$ be a Levi decomposition of $\mathbb{L}$ with $\mathbb{F}$ reductive and $\mathbb{U}$ unipotent. Let $p: \mathbb{L}(l) \rightarrow \mathbb{F}(l)$ be the natural map. Then the algebraic hull of $ p \circ \beta:G_A \times \Omega_{\L}^{ext} \rightarrow \mathbb{F}(l)$ is $\mathbb{F}$. Indeed, otherwise there would exist a proper $l$-subgroup $\mathbb{F}'$ and a Borel map $\psi: \Omega_{\L}^{ext} \rightarrow \mathbb{F}(l)$ such that for all $g \in G_A$ and $\nu$-almost every $X \in \Omega_{\L}^{ext}$ we have $\psi(gX)\left(p \circ \beta\right)(g,X) \psi(X)^{-1} \in \mathbb{F'}(l)$. Taking a Borel map $\tilde{\psi}: \Omega_{\L}^{ext} \rightarrow \mathbb{L}(l)$ such that $p \circ \tilde{\psi}=\psi$ we would have for all $g \in G_A$ and $\nu$-almost every $X \in \Omega_{\L}^{ext}$ that
 $$\tilde{\psi}(gX)\beta(g,X) \tilde{\psi}(X)^{-1} \in \left(\mathbb{F'}\ltimes \mathbb{U}\right)(l) \subsetneq \mathbb{L}(l).$$
 A contradiction. Thus, the cocycle $p\circ \beta$ has a reductive algebraic hull. By \cite[Th. 3.16]{MR2039990} there are a continuous group homomorphism $\pi:G_A \rightarrow \mathbb{F}(l)$ and a cocycle $z:G_A \times \Omega_{\L}^{ext} \rightarrow \mathbb{F}(l)$ that takes values in a compact subgroup centralising $\pi(G_A)$ such that $p \circ \beta$ is cohomologous to the cocycle defined $\nu$-almost everywhere by $g,X \mapsto \pi(g)z(g,X)$.

 Suppose first that $p(k_{\alpha})\neq p(l)$ for all $\alpha \in A$. Then $\pi$ is trivial according to \cite[I.2.6.1, (i)]{MR1090825}. So $p\circ \beta$ is cohomologous to a cocycle that takes values in a compact subgroup of $\mathbb{F}(l)$. Reasoning as above we see that $\beta$ is cohomologous to a cocycle that takes values in an amenable subgroup. By \cite[Th. 9.1.1]{zimmer2013ergodic} we thus have that $\beta$ is cohomologous to a cocycle that takes values in a compact subgroup of $\mathbb{H}(l)$. Whence $T \circ \alpha_s$ is cohomologous to a cocycle that takes values in a compact subgroup of $\mathbb{H}(l)$. By Corollary \ref{Corollary: Compact cocycles} the subset $T(\L)$ is relatively compact in $\mathbb{H}(l)$. 
 
 Suppose now that the assumptions of (2) are satisfied. If $\pi$ is trivial, then as above we conclude that $T(\L)$ is relatively compact in $\mathbb{H}(l)$. Otherwise according to \cite[I.2.6.2]{MR1090825} the Zariski closure of $\pi(G_A)$ is semi-simple. Moreover, one can see applying \cite[I.2.6.1,(iii)]{MR1090825} again that $\dim(\G_\alpha) \leq \dim(\mathbb{F})$ for some $\alpha \in A$ with $p(k_{\alpha})=p(l)$. As a consequence, we have the equality $\dim(\mathbb{F})=\dim(\mathbb{H})$ and this yields $\mathbb{H}=\mathbb{L}=\mathbb{F}$ since $\mathbb{H}$ is connected. So $p=\id$ and $p \circ \beta = \beta$. So $\pi$ almost extends $T$ according to Proposition \ref{Proposition: Constant cocycles in semi-simple groups}.  
\end{proof}
 
 \subsection{Compact Finiteness}\label{Subsection: Rigidity of compact approximate subgroups} We prove now a general finiteness property of compact images of approximate groups.
 \begin{proposition}\label{Proposition: Rigidity of compact approximate subgroups}
  Let $\L$ be an approximate subgroup of some discrete group. Take a family $\mathcal{F}$ of group homomorphisms $\tau: \langle \Lambda \rangle \rightarrow H_{\tau}$ with $H_{\tau}$ a locally compact group, $\tau(\langle \Lambda \rangle)$ dense and $\overline{\tau(\L)}$ compact with non-empty interior. Suppose moreover that $H_{\tau}$ contains a closed non-discrete non-compact topologically simple subgroup $S_{\tau}$ contained in all non-trivial normal subgroups. Then there is $\mathcal{F}' \subset \mathcal{F}$ finite such that:
  \begin{enumerate}
   \item for any group homomorphism $\sigma: \langle \Lambda \rangle \rightarrow G \in \mathcal{F}$ there is $\tau: \langle \Lambda \rangle \rightarrow H \in \mathcal{F}'$ and a continuous group homomorphism $\psi: H \rightarrow G$ such that $\psi \circ \tau = \sigma$;
   \item the intersection $\overline{\tau_{\mathcal{F}'}(\L^2)}\cap \prod_{\tau \in \mathcal{F}'}S_{\tau}$ is a neighbourhood of the identity in $\prod_{\tau \in \mathcal{F}'}S_{\tau}$ where $\tau_{\mathcal{F}'}:=\prod_{\tau \in \mathcal{F}'} \tau$.
  \end{enumerate}
      
 \end{proposition}
 
 \begin{proof}
  For any $X \subset \mathcal{F}$  write $H_X:=\prod_{\tau \in X} H_{\tau}$, $\tau_X:=\prod_{\tau \in X} \tau : \langle \Lambda \rangle \rightarrow H_X$ the diagonal map and $p_{X}: H_{\mathcal{F}} \rightarrow H_X$ the natural projection. Since $\overline{\tau_{\mathcal{F}}(\L)} \subset H_{\mathcal{F}}$ is a compact approximate subgroup, there is a topology on the subgroup $L$ it generates finer than the induced topology, with $L$ locally compact and $\overline{\tau_{\mathcal{F}}(\L^2)}=:V$ a neighbourhood of the identity (see \cite[Th. 4.1]{machado2019goodmodels}). From now on, we will consider $L$ equipped with this topology. Note that the restriction to $L$ of the natural projection $H_{\mathcal{F}} \rightarrow H_X$ is continuous with respect to that topology. Furthermore, for every $\tau \in \mathcal{F}$ one sees that $L$ projects surjectively to $H_{\tau}$ - indeed, the projection contains $\overline{\tau(\Lambda)}$, which has non-empty interior, and $\tau(\langle \Lambda \rangle)$, which is dense. 
  
  Write $N_{\tau} \leq L$ the kernel of $L \rightarrow H_{\tau}$. We claim that there is a finite subset $X \subset \mathcal{F}$ such that $\bigcap_{\tau \in X} N_{\tau} =\{e\}$. Indeed, choose a symmetric relatively compact open neighbourhood $W$ of the identity in $L$. Write $K:=\overline{VW^2V}\setminus W$. Since both $V$ and $W$ are relatively compact and $W$ is open, $K$ is compact. Moreover, $K \cap \bigcap_{\tau \in \mathcal{F}} N_{\tau}  = \emptyset$ so $K \cap \bigcap_{\tau \in X} N_{\tau}  = \emptyset$ for some finite subset $X \subset \mathcal{F}$. Let us consider the subset $C:=W \cap \bigcap_{\tau \in X} N_{\tau} $. Note first that $C$ is a symmetric subset that is open in $\bigcap_{\tau \in X} N_{\tau}$. Take $g,h \in C$, then 
  $$gh \in W^2 \cap \bigcap_{\tau \in X} N_{\tau} \subset \left( W \cup \left(W^2\setminus W\right)\right) \cap \bigcap_{\tau \in X} N_{\tau} \subset \left(W \cup K\right) \cap \bigcap_{\tau \in X} N_{\tau} =  C,$$
  where the last inequality is a consequence of $K\cap \bigcap_{\tau \in X} N_{\tau}  = \emptyset$. We have thus shown that $C$ is a subgroup. Similarly, if $g \in C$ and $h \in V$, then 
  $$hgh^{-1} \in \left( W \cup \left(VWV \setminus W\right)\right) \cap \bigcap_{\tau \in X} N_{\tau} \subset \left( W \cup K\right) \cap \bigcap_{\tau \in X} N_{\tau} = C.$$
  So $C$ is normalised by $V$ and, hence, by $\langle V \rangle = L$. Now, $C$ is an open subgroup of $\bigcap_{\tau \in X} N_{\tau}$ contained in $W$, so it is compact in $L$. Therefore, for all $\tau \in \mathcal{F}$ the projection $C_{\tau}$ of $C$ to $H_{\tau}$ is a compact normal subgroup. If it were non-trivial, $C_{\tau}$ would contain $S_{\tau}$ which would contradict its compactness. So $C_{\tau}=\{e\}$ and, thus, $C=\{e\}$. In other words, the closed normal subgroup $\bigcap_{\tau \in X} N_{\tau}$ is discrete in $L$ which is compactly generated. So $\bigcap_{\tau \in X} N_{\tau}$ is countable. For all $\tau \in \mathcal{F}$, if the projection of $\bigcap_{\tau \in X} N_{\tau}$ to $H_{\tau}$ were non-trivial, then it would contain $S_{\tau}$ and would be uncountable. So the projection of $\bigcap_{\tau \in X} N_{\tau}$ to $H_{\tau}$ is trivial for all $\tau \in \mathcal{F}$. So $\bigcap_{\tau \in X} N_{\tau}=\{e\}$ and the claim is proved.

  Take $X \subset \mathcal{F}$ of minimal cardinality with $\bigcap_{\tau \in X} N_{\tau} = \{e\}$ and write $L_{X}:=p_{X}(L)$. By minimality of $X$ we know that $L_{X} \cap H_{\tau}$ is non-trivial for all $\tau \in X$. Since $L$ projects surjectively to $H_{\tau}$, we have that $L_{X} \cap H_{\tau}$ is normal in $H_{\tau}$. So $L_{X} \cap H_{\tau}$ contains $S_{\tau}$. As a consequence, $\prod_{\tau \in X} S_{\tau} \subset L_{X}$. Now, $\overline{\tau_X(\Lambda^2)}$ is a neighbourhood of the identity in $L_X$, so $\overline{\tau_X(\Lambda^2)} \cap \prod_{\tau \in X} S_{\tau} $ is a neighbourhood of the identity in $\prod_{\tau \in X} S_{\tau}$ and, hence,  $X$ satisfies (2). Take now $\sigma \notin X$. Since the projection of $p_{X\cup \{\sigma\}}(L)$ to $L_X$ has trivial kernel, by Goursat's lemma $p_{X\cup \{\sigma\}}(L)$ must be the graph of a group homomorphism $\pi: L_{X} \rightarrow H_{\sigma}$. Since $p_{X\cup \{\sigma\}}(L)$ is Borel, $\pi$ is continuous (\cite[App. A]{zimmer2013ergodic}) where we consider on $L_X$ the topology inherited from $L$. Furthermore, by construction, $\pi$ satisfies $\pi \circ \tau_X = \sigma$ and $\pi(L_X)=H_{\sigma}$. We wish to show now that $\pi$ factors through one of the projections $p_{\tau}: L_X \rightarrow H_{\tau}$ for some $\tau \in X$. Take $\tau_1,\tau_2 \in X$ such that $\pi(H_{\tau_1} \cap L_X)$ and $\pi(H_{\tau_2} \cap L_X)$ are non-trivial. For $i =1,2$, $\pi(H_{\tau_i} \cap L_X)$ is normal in $H_{\sigma}$. So it contains $S_{\sigma}$. If $\tau_1\neq\tau_2$, then $\pi(H_{\tau_1} \cap L_X)$ and $\pi(H_{\tau_2} \cap L_X)$ commute. In particular, $S_{\sigma}$ is abelian. This contradicts that $S_{\sigma}$ is topologically simple and infinite. So $\tau_1=\tau_2$. As a consequence, there is $\tau_{\sigma}\in X$ such that for all $\tau  \in X \setminus \{\tau_{\sigma}\}$ we have $\pi(H_{\tau} \cap L_X)=\{e\}$. Since $p_{\tau_{\sigma}}(L_X)=H_{\tau_\sigma}$, $\pi$ thus factors through $p_{\tau_{\sigma}}$ and yields a continuous group isomorphism $\psi: H_{\tau_{\sigma}} \rightarrow H_{\sigma}$ such that $\pi = \psi \circ p_{\tau_{\sigma}}$. Therefore, we have 
  $$\sigma = \pi \circ \tau_X = \psi \circ p_{\tau_{\sigma}} \circ \tau_X = \psi \circ \tau_{\sigma}. $$
   So (1) is satisfied as well and $\mathcal{F}':=X$ works. 
 \end{proof}

Let $k$ be a local field of characteristic $0$ and $\mathbb{H}$ be an absolutely simple (centreless) algebraic group defined over $k$. A fundamental result due to Borel and Tits (see \cite[\S I.1.5 and \S I.2.3]{MR1090825} and \cite{BoTits} for this and more) asserts that the subgroup $\mathbb{H}(k)^+$ generated by all the unipotent elements in $\mathbb{H}(k)$ is a closed normal simple subgroup and every non-trivial normal subgroup of $\mathbb{H}(k)$ contains $\mathbb{H}(k)^+$ (In fact, $\mathbb{H}(k)^+$ is contained in any non-trivial normal subgroup of any non-compact open subgroup of $\mathbb{H}(k)$). Since $k$ has characteristic $0$, the subgroup $\mathbb{H}(k)^+$ is moreover open, has finite index and is the minimal non-compact open subgroup of $\mathbb{H}(k)$ (\cite[9.10]{BoTits}). Along with Lemma \ref{Lemma: Compact Zariski-dense approximate subgroups of simple groups} below, this fact will allow us to use Proposition \ref{Proposition: Rigidity of compact approximate subgroups} in the proof of Theorem \ref{Main theorem}.  

\begin{lemma}\label{Lemma: Compact Zariski-dense approximate subgroups of simple groups}
Let $k$ be a local field of characteristic $0$ and let $\mathbb{G}$ be an absolutely simple (centreless) algebraic group defined over $k$.  Let $\Lambda$ be an infinite compact approximate subgroup of $G:=\mathbb{G}(k)$ such that $\langle \Lambda \rangle$ is unbounded and Zariski-dense. Then there are a closed subfield $k' \subset k$ and an absolutely almost simple algebraic group $\mathbb{L}$ defined over $k'$ such that: 
\begin{enumerate}
\item $\mathbb{L}(k)=\mathbb{G}(k)$;
\item $\Lambda^2$ is a neighbourhood of the identity in $\mathbb{L}(k')$. 
\end{enumerate}
In addition, $\Tr \Ad \Lambda \subset k'$. 
\end{lemma}

Obtaining information regarding traces of adjoints is often a key step towards arithmeticity results. Here as well, the values of these traces will play a crucial role. We refer to \cite[\S I.1.4]{MR1090825}, \cite[Lem. 6.1.6]{zimmer2013ergodic} and \cite{MR0279206} and references therein for background and more.

\begin{proof}
We know that $k$ is a finite extension of a local subfield $k_0$ isomorphic to either $\mathbb{R}$ or $\mathbb{Q}_p$ for some prime number $p$. Write $\mathbb{H}:=R_{k/k_0} \mathbb{G}$ Weil's restriction of scalars of $\mathbb{G}$ and recall that $\mathbb{H}$ is a simple algebraic group defined over $k_0$ such that $\mathbb{H}(k_0)=G$ (see \cite[\S I.1.7]{MR1090825}). We will exploit the following observation: let $L \subset G (=\mathbb{H}(k_0))$ be a Zariski-closed subgroup for the $k_0$-structure and suppose that $L$ is Zariski-dense in the $k$-structure (i.e. in $\mathbb{G}(k)$), then the Zariski-connected component of $L$ is simple. Indeed, if $R$ denotes the radical of $L$, then the Zariski-closure $\overline{R}^k$ of $R$ in the $k$-structure (i.e. in $\mathbb{G}(k)$) is a Zariski-closed soluble subgroup of $G(=\mathbb{G}(k))$. It is moreover normalised by $L$, so $\overline{R}^k$ is normal in $G$. By assumption, $\overline{R}^k$ is trivial. So $R$ is trivial. Hence, the connected component of the identity $L$ is semi-simple. Similar arguments show that it has at most one simple factor and is centreless. 

Now consider $\Lambda$ in $G$ seen as a $k_0$-Lie group. Let $\mathfrak{g}$ denote the $k_0$-Lie algebra of $G$. According to \cite[Th. 1.4]{machado2019goodmodels}, we can associate with $\Lambda$ a non-trivial Lie subalgebra $\mathfrak{l}$ of $\mathfrak{g}$ stable under $\Ad(\langle \Lambda \rangle)$. The stabiliser $L$ of $\mathfrak{l}$ is a subgroup of $G$ that is Zariski-closed in the $k_0$-structure. Since it contains $\langle \Lambda \rangle$, $L$ is also Zariski-dense in the $k$-structure. By the above paragraph, its Zariski-connected component $L^0$ is simple. But the $k_0$-Lie subalgebra of $L$ contains $\mathfrak{l}$ as an ideal. By simplicity, $\mathfrak{l}$ is therefore the Lie subalgebra of $L$. According to \cite[Th. 1.4]{machado2019goodmodels}, we find that $L^0 \cap \Lambda^2$ is a neighbourhood of the identity in $L^0$ and, hence, $L$ (with the Hausdorff topology). 

By \cite[\S I.1.7]{MR1090825} now, there exists a local field $k'$ and an absolutely simple algebraic group $\mathbb{L}$ defined over $k'$  such that $L^0= \mathbb{L}(k')$. Since $L^0$ is non-compact - because it is a finite index subgroup of $L$ which contains the unbounded subgroup $\langle \Lambda \rangle$ - and both $L^0$ and $\mathbb{G}(k)$ are centreless, we can apply a result of Borel and Tits (see \cite[Thm. I.1.8.1]{MR1090825}) which allows us to assume that we have chosen $k'$ and $\mathbb{L}$ such that $k'$ is a closed subfield of $k$ and $\mathbb{L}=\mathbb{G}$ as algebraic groups over $k$. Moreover, the normaliser of $\mathbb{L}(k')$ in $\mathbb{L}(k)$ is $\mathbb{L}(k')$ itself (e.g. \cite[Lem. VII.6.2]{MR1090825}). And so $\Lambda \subset \mathbb{L}(k')$. In particular, $\Tr \Ad \Lambda \subset k'$  (by e.g. \cite[I.1.4.8]{MR1090825}, see also the elementary explanation in the proof of \cite[Lem. 6.1.6]{zimmer2013ergodic} in the case $k_0=\mathbb{R}$).
\end{proof}

 \subsection{Arithmeticity}\label{Subsection: Arithmeticity : Semi-Simple Algebraic Groups}  We will proceed by induction on the cardinality of $A$. To do so we will need the following lemma about reduction of $\star$-approximate lattices:
  
  \begin{lemma}\label{Lemma: Reduction of star-approximate lattices}
   Let $\L$ be a $\star$-approximate lattice in $G_A$:
   \begin{enumerate}
    \item take $\gamma \in \langle \Lambda \rangle$ non-central and $B_{\gamma}:=\{\alpha \in A | p_{\alpha}(\gamma) \text{ non-central}\}$, then $p_{B_{\gamma}}(\L)$ is a $\star$-approximate lattice;
    \item take $B \subset A$, if $p_B(\L)$ is a $\star$-approximate lattice, then $p_{A\setminus B}(\L)$ is a $\star$-approximate lattice.
   \end{enumerate}
  \end{lemma}
  
  \begin{proof}
   First of all, for $g \in G_A$ and $f$ centralising $G_{A\setminus B_{\gamma}}$ we have $gf g^{-1} = p_{B_{\gamma}}(g)f p_{B_{\gamma}}(g)^{-1}$. Let $N$ be the normal subgroup of $\langle \Lambda \rangle$ generated by $\gamma$. Since $\langle \Lambda \rangle$ has property (S) (\cite[Prop. 6.5]{machado2019goodmodels}), the Borel density theorem yields $C_{G_A}(N)=G_{A\setminus B_{\gamma}} \times Z_{B_{\gamma}}$ where $C_{G_A}(\cdot)$ is the centraliser in $G_A$ and $Z_{B_{\gamma}}$ is the centre of $G_{B_{\gamma}}$. There is thus $F \subset N$ finite such that $C_{G_A}(F)=G_{A\setminus B_{\gamma}} \times Z_{B_{\gamma}}$. Take $n \geq 0$ such that $F \subset \L^n$. There is a neighbourhood $U \subset G_{B_{\gamma}}$ of $\{e\}$ such that $\L^{n+4} \cap Uf U^{-1}=\{f\}$ for all $f \in F$. So $\lambda f \lambda^{-1} = f$ if $f \in F$ and $\lambda \in \L^2 \cap p_{B_{\gamma}}^{-1}(U)$. Hence, $p_{B_{\gamma}}(\L^2) \cap U \subset Z_{B_{\gamma}}$ so $p_{B_{\gamma}}(\L)$ is uniformly discrete. Now (1) follows from Corollary \ref{Corollary: Intersection star-approximate lattice and closed subgroup}. If $p_B(\L)$ is locally finite, then $\L^2 \cap G_{A\setminus B}$ is a $\star$-approximate lattice in $G_{A\setminus B}$ by Corollary \ref{Corollary: Intersection star-approximate lattice and closed subgroup}. We can thus find $\gamma \in \L^2 \cap G_{A\setminus B}$ such that $B_{\gamma}= A \setminus B$ (e.g. Corollary \ref{Corollary: Star-approximate lattices are bi-syndetic}). So (2) follows from (1). 
  \end{proof}

  \begin{proof}[Proof of Theorem \ref{Main theorem}.]
  First of all, using Weil's restriction of scalars there is for every $\alpha$ a finite extension $k_{\alpha}'$ of $k_{\alpha}$ and an absolutely almost simple $k_{\alpha}'$ group $\mathbb{G}_{\alpha}'$ of $k_\alpha'$-rank at least $2$ such that $\mathbb{G}_{\alpha}(k_{\alpha})=\mathbb{G}_{\alpha}'(k_{\alpha}')$ (\cite[\S I.1.7]{MR1090825}). So we can assume that the groups $\mathbb{G}_{\alpha}$ are absolutely almost simple. The crux of the proof is to establish the following claim:
   \begin{claim}\label{Claim: Discrete counterpart}
    There is $H$ a finite product of groups of points of simple centreless algebraic groups over local fields of characteristic $0$ and a group homomorphism $\tau: \langle \Lambda \rangle \rightarrow H$ such that $\tau(\L)$ is relatively compact and topologically generates an open finite index subgroup, and the image of the diagonal map $\langle \Lambda \rangle \rightarrow G_A \times H$ is discrete.  
   \end{claim}
   
   Let us explain why the claim is sufficient. Since $\L$ is a $\star$-approximate lattice and is contained in the projection $M$ to $G_A$ of $\id \times \tau(\langle \Lambda \rangle) \cap G_A \times W_0$ for some compact neighbourhood of the identity $W_0 \subset H$, the subgroup $\Gamma_{\tau}:=\id \times \tau(\langle \Lambda \rangle)$ is a lattice (Lemma \ref{Lemma: A weak model set with finite co-volume is a model set}). So $\L$ is contained in and commensurable to the model set $M$ by Corollary \ref{Corollary: Towers of star-approximate subsets are commensurable}.
   
   Furthermore, we only need to establish the claim when the $\mathbb{G}_{\alpha}$'s are centreless. Indeed, for $\alpha \in A$ write $\mathbb{G}_{\alpha}^{ad}$ the adjoint of $\mathbb{G}_{\alpha}$. Then the natural map $\pi: G_A \rightarrow G_A^{ad}:=\prod_{\alpha \in A}\mathbb{G}_{\alpha}^{ad}(k_{\alpha})$ is a continuous group homomorphism with finite kernel and finite index range. Therefore, $\pi(\Lambda)$ is a $\star$-approximate lattice in the group $G_A^{ad}$ (Corollary \ref{Corollary: Intersection star-approximate lattice and closed subgroup}). Suppose we can find $\tau: \langle \pi(\Lambda) \rangle \rightarrow H$ as in Claim \ref{Claim: Discrete counterpart} applied to $\pi(\Lambda)$. Then $\tau \circ \pi (\Lambda)$ is relatively compact and topologically generates an open finite index subgroup. Moreover, the graph $\Gamma_{\tau \circ \pi}$ of $\tau \circ \pi$ is equal to $(\pi\times \id)^{-1}\left(\id \times \tau(\langle \pi(\Lambda) \rangle) \right)$ which must be discrete because $\pi$ has finite kernel and $\id \times \tau(\langle \pi(\Lambda) \rangle)$ is discrete. From now on, we will therefore make the assumption that the $\mathbb{G}_{\alpha}$'s are centreless.
   \medbreak
  We will prove the claim by induction on $|A|$. If $|A|=0$ the result is obvious. Suppose now that $|A| \geq 1$. If there is a proper non-trivial subset $B \subset A$ such that $p_B(\L)$ is a $\star$-approximate lattice, then $p_{A\setminus B}(\L)$ is a $\star$-approximate lattice as well by Lemma \ref{Lemma: Reduction of star-approximate lattices}. Applying Claim \ref{Claim: Discrete counterpart} to both $p_B(\L)$ and $p_{A\setminus B}(\L)$ and taking the Cartesian product gives $H$ and $\tau$ as in Claim \ref{Claim: Discrete counterpart} except that $\tau(\L)$ might not topologically generate a finite index open subgroup of $H:=\prod_{i\in I} H_i$ where the $H_i$'s are the simple factors of $H$. However, at the very least the projection of $\tau(\L)$ to any simple factor $H_i$ topologically generates an open finite index subgroup of $H_i$ according to the induction hypothesis. Thanks to part (2) of Proposition \ref{Proposition: Rigidity of compact approximate subgroups} we will now be able to conclude. Indeed, for all $i \in I$ let $\tau_i: \langle \Lambda \rangle \rightarrow H_i$ be the composition of the map $\tau$ with the natural projection $H \rightarrow H_i$. Define $\mathcal{F}:=\{\tau_i: \langle \Lambda \rangle \rightarrow \overline{\langle \Lambda \rangle} | i \in I\}$. Notice that for all $i \in I$, since $H_{\tau_i}:=\overline{\langle \Lambda \rangle}$ is a finite index subgroup of $H_i$, the subgroup $S_{\tau_i}:=H_i^+$ generated by the unipotent elements of $H_i$ is contained in all non-trivial normal subgroups of $H_{\tau_i}$ (see the paragraph below the proof of Proposition \ref{Proposition: Rigidity of compact approximate subgroups}). So let $\mathcal{F}' \subset \mathcal{F}$ be given by Proposition \ref{Proposition: Rigidity of compact approximate subgroups} applied to $\mathcal{F}$ and let $J \subset I$ be the subset of indices of elements in $\mathcal{F}'$.  Write $p_J: H \rightarrow \prod_{j \in J} H_j$ the natural projection. Then $p_J \circ \tau$ satisfies all the conditions of Claim  \ref{Claim: Discrete counterpart}. In particular, $\overline{p_J \circ \tau(\langle \Lambda \rangle)}$ contains the finite index open subgroup $\prod_{j \in J} H_j^+$ by (2) of Proposition \ref{Proposition: Rigidity of compact approximate subgroups}; by (1) there is a continuous group homomorphism $\psi: \prod_{j \in J} H_j \rightarrow H$ such that $\tau = \psi \circ p_J \circ \tau$. Since the graph of $\tau$ is discrete, so is the graph of $p_J \circ \tau$.
   
  Suppose otherwise that there is no such $B$. According to part (1) of Lemma \ref{Lemma: Reduction of star-approximate lattices} the projection of $\langle \Lambda \rangle$ to any factor is injective. It has property (S) by \cite[Prop. 6.5]{machado2019goodmodels} and, thus, is Zariski-dense according to the Borel density theorem. Choose $\alpha$ with $\dim( G_{\alpha})$ minimal. According to Theorem \ref{Theorem: Approximate lattices in property T groups are finitely generated} the group $\langle \Lambda \rangle$ is finitely generated. Therefore the set $\{\Tr \Ad p_{\alpha} \gamma | \gamma \in \langle \Lambda \rangle\}$ generates a finitely generated field $K$. According to \cite{MR0279206} and \cite[\S I.1.7]{MR1090825} we can identity $G_{\alpha}$ with the $k_{\alpha}$-points of an absolutely simple algebraic group $\mathbb{H}$ defined over $K$ such that $p_{\alpha}(\langle \Lambda \rangle)\subset \mathbb{H}(K)$ (recall that $\G_{\alpha}$ is centreless, and so is $\mathbb{H}$). Let $\mathcal{F}$ be the family $\{ \widehat{\sigma}| p_{\alpha}(\langle \Lambda \rangle) \rightarrow \mathbb{H}^{\sigma}(k) \}_{\sigma}$ where $\sigma: K \rightarrow k$ runs through the classes of field embeddings with $k$ local, $\sigma(K)$ dense and such that the natural map $\widehat{\sigma}:p_{\alpha}\left(\langle \Lambda \rangle\right) \rightarrow \mathbb{H}^{\sigma}(k)$ has non-compact image but sends $\L$ to a relatively compact set. Here, two embeddings $\sigma_1: K \rightarrow k_1$ and $\sigma_2: K \rightarrow k_2$ are identified if there exists a continuous isomorphism of fields $\phi: k_1 \rightarrow k_2$ such that $\sigma_2 = \phi \circ \sigma_1$. We wish now to apply Proposition \ref{Proposition: Rigidity of compact approximate subgroups}. By the paragraph following Proposition \ref{Proposition: Rigidity of compact approximate subgroups}, for every $\widehat{\sigma}: p_{\alpha}\left(\langle \Lambda \rangle\right) \rightarrow \mathbb{H}^{\sigma}(k)\in \mathcal{F}$, we can choose the subgroup $\mathbb{H}^{\sigma}(k)^+$ generated by the unipotent elements as the subgroup $S_{\widehat{\sigma}}$. To be able to apply Proposition \ref{Proposition: Rigidity of compact approximate subgroups} we will show that $\widehat{\sigma}\circ p_{\alpha}(\Lambda)$ has non-empty interior. First of all, if $\widehat{\sigma} \circ p_{\alpha}(\Lambda)$ is finite, then $p_{\alpha}(\Lambda)$ is finite. Since the restriction of $p_{\alpha}$ to $\Lambda$ is injective, this implies that $\Lambda$ itself is finite. A contradiction. So $\overline{\widehat{\sigma} \circ p_{\alpha}(\Lambda)}$ is infinite and compact. Now $\mathbb{H}$ and, hence, $\mathbb{H}^{\sigma}$ are assumed absolutely simple (thus, centreless). By Lemma \ref{Lemma: Compact Zariski-dense approximate subgroups of simple groups}, if $\overline{\widehat{\sigma} \circ p_{\alpha}(\Lambda)}$ has empty interior, then there is a proper closed subfield $k' \subset k$ such that $\sigma \left( \Tr \Ad p_{\alpha}(\langle \Lambda \rangle)\right) = \Tr \Ad \widehat{\sigma} \circ p_{\alpha}(\langle \Lambda \rangle) \subset k'$. But $\Tr \Ad p_{\alpha}(\langle \Lambda \rangle)$ generates $K$ which has dense image in $k$. Contradicting that $k'$ is a proper subfield of $k$. So $\overline{\widehat{\sigma} \circ p_{\alpha}(\Lambda)}$ indeed has non-empty interior in $\mathbb{H}^{\sigma}(k)$. In particular, $H_{\widehat{\sigma}}:=\overline{\widehat{\sigma} \circ p_{\alpha}(\langle\Lambda\rangle)}$ is an open non-compact subgroup of $\mathbb{H}^{\sigma}(k)$ - so every non-trivial normal subgroup of $H_{\widehat{\sigma}}$ contains $S_{\widehat{\sigma}}=\mathbb{H}^{\sigma}(k)^+$ by the paragraph following Proposition \ref{Proposition: Rigidity of compact approximate subgroups}. Hence, Proposition \ref{Proposition: Rigidity of compact approximate subgroups} applied to $\mathcal{F}$ provides a finite subset $\mathcal{F}' \subset \mathcal{F}$. Write $H:=\prod_{\langle \Lambda \rangle \rightarrow \mathbb{H}^{\sigma}(k) \in \mathcal{F}'} \mathbb{H}^{\sigma}(k)$ and let $\tau: \langle \Lambda \rangle \rightarrow H$ be the natural map. Suppose that $\id \times \tau (\langle \Lambda \rangle) \subset G_A \times H$ is not discrete. There is then an infinite subset $X \subset \langle \Lambda \rangle$ such that $\id \times \tau(X)$ is bounded in $G_A \times H$. But by \cite[Lem. 2.1]{MR2373146} there is $\sigma: K \rightarrow k$ with $k$ local, $\sigma(K)$ dense and $\widehat{\sigma} \circ p_{\alpha}(X)$ unbounded. Now, either $ \widehat{\sigma} \circ p_{\alpha}(\Lambda)$ is unbounded or it is bounded. If $\widehat{\sigma} \circ p_{\alpha}(\Lambda)$ is unbounded, by Proposition \ref{Proposition: Partial superrigidity result}, $\widehat{\sigma} \circ p_{\alpha}$ almost extends to a continuous group homomorphism $\phi: G_A \rightarrow \mathbb{H}^\sigma(k)$. In fact, $\widehat{\sigma} \circ p_{\alpha}$ extends since $\mathbb{H}^\sigma(k)$ is centreless. Thus, $\widehat{\sigma} \circ p_{\alpha}(X)=\phi(X)$. But $X$ is a relatively compact subset of $G_A$, so $\phi(X)$ is a relatively compact subset of $\mathbb{H}^\sigma(k)$, contradicting that $\widehat{\sigma} \circ p_{\alpha}(X)$ is unbounded. If now $\widehat{\sigma} \circ p_{\alpha}(\Lambda)$ is bounded, then by Proposition \ref{Proposition: Rigidity of compact approximate subgroups} we can find a continuous group homomorphism $\pi : H \rightarrow  \mathbb{H}^{\sigma}(k)$ such that $(\widehat{\sigma} \circ p_{\alpha})_{|\langle \Lambda \rangle} = \pi \circ \tau$. By our assumption $\tau(X)$ is a relatively compact subset of $H$, so $\pi(\tau(X))= \widehat{\sigma} \circ p_{\alpha}(X)$ is a relatively compact subset $\mathbb{H}^{\sigma}(k)$. Again, we reach a contradiction. Therefore, there can be no infinite subset $X$ of $\langle \Lambda \rangle$ with $\id \times \tau(X)$ bounded in $G_A \times H$ i.e. $\id \times \tau(\langle \Lambda \rangle)$ is discrete.  
  \end{proof}

  \begin{proof}[Proof of Theorem \ref{First theorem}.]
 We apply Theorem \ref{Main theorem} and get $B$,  $(k_{\beta})_{\beta \in B}$, $k_{\beta}$-groups $(\mathbb{H}_{\beta})_{\beta\in B }$ and a lattice $\Gamma$ in $\mathbb{G}(\mathbf{R}) \times \prod_{\beta \in B} \mathbb{H}_\beta(k_{\beta})$. If we take $B$ minimal, then $\Gamma$ is irreducible. Let $v_0$ denote the usual Archimedean place on $\mathbb{R}$. According to Margulis' arithmeticity theorem \cite[IX.1]{MR1090825}, we can assume that there are a number field $K \subset \mathbb{R}$, pairwise inequivalent places $\{v_{\beta}\}_{\beta \in B}$ of $K$ that are all inequivalent to the restriction of $v_0$ and containing all the Archimedean places inequivalent to $v_0$, a $K$-group $\mathbb{G}'$ with $\mathbb{G}'(\mathbb{R})=\mathbb{G}(\mathbb{R})$ such that: $\langle \Lambda \rangle$ is contained in $\mathbb{G}'(K)$;  $\mathbb{H}_{\beta}(k_{\beta}) = \mathbb{G}'(K_{v_{\beta}})$; and $\Gamma$ is commensurable to $\mathbb{G}'(\mathcal{O}_{K})$ via the diagonal embedding. We may identify $\G'$ with a $K$-subgroup of $\SL_n(\mathcal{O}_{K})$ in such a way that $\mathbb{G}'(\mathcal{O}_{K})$ is $\G'(K) \cap \GL_n(\mathcal{O}_{K})$. For every $\beta \in B$, we can define the subset 
 $$U_{\beta}:=\{g \in \SL_n(K_{v_{\beta}}) | g- I, g^{-1}-I \text{ have entries in } O_{\beta}\}$$
 where $O_{\beta}$ is the (closed) unit ball of $K_{v_{\beta}}$. Then $U_{\beta}$ is a symmetric compact neighbourhood of the identity in $\SL_n(K_{v_{\beta}})$. Write $p_{\beta}: \SL_n(K) \rightarrow \SL_n(K_{v_{\beta}})$ and notice that $\mathbb{G}'(\PVS(K))$ is equal to $\mathbb{G}'(\mathcal{O}_{K}) \cap \bigcap_{\beta \in B} p_{\beta}^{-1}\left( U_{\beta}\right)$ (see Subsection \ref{Subsection: Definition and first properties first part}). By Theorem \ref{Main theorem} now, $p_{\beta}(\Lambda)$ is a relatively compact subset of $\SL_n(K_{v_{\beta}})$ and, thus, is covered by finitely many translates of $U_{\beta}$. Furthermore, if $p_B$ denotes the diagonal map $\mathbb{G}'(\mathcal{O}_{K}) \rightarrow \prod_{\beta \in B} \SL_n(K_{v_{\beta}})$ defined by $\gamma \mapsto (p_{\beta}(\gamma))_{\beta \in B}$, then $p_B(\Lambda)$ is covered by finitely many translates of the neighbourhood $\prod_{\beta \in B} U_{\beta}$. Therefore, $\Lambda$ is covered by finitely many translates of $p_B^{-1}(\prod_{\beta \in B} U_{\beta})$ (see \cite[Lem. 3.1]{machado2019goodmodels} for details). Since 
 $$p_B^{-1}(\prod_{\beta \in B} U_{\beta}) = \mathbb{G}'(\mathcal{O}_{K}) \cap \bigcap_{\beta \in B} p_{\beta}^{-1}\left( U_{\beta}\right)  = \mathbb{G}'(\PVS(K)),$$
we conclude that $\Lambda$ and $\mathbb{G}'(\PVS(K))$ are commensurable thanks to Corollary \ref{Corollary: Towers of star-approximate subsets are commensurable}.
  \end{proof}

 Finally, we deduce \emph{a posteriori} a superrigidity theorem without assumptions on the dimension of the target group. 
 
 \begin{proof}[Proof of Theorem \ref{Superrigidity}.]
  Let $T: \langle \Lambda \rangle \rightarrow \mathbb{L}(k)$. Take  $B$ finite, characteristic $0$ local fields $(k_{\beta})_{\beta \in B}$,  simple $k_{\beta}$-groups $(\mathbb{H}_{\beta})_{\beta\in B }$, a lattice $\Gamma \subset G_A \times \prod_{\beta \in B}\mathbb{H}_{\beta}(k_{\beta}) $  given by Theorem \ref{Main theorem}. Identify $\langle \Lambda \rangle$ with $\Gamma$. According to Margulis' superrigidity there is a continuous group homomorphism $\pi: G_A \times \prod_{\beta \in B}\mathbb{H}_{\beta}(k_{\beta}) \rightarrow \mathbb{L}(k)$ that extends $T$. Moreover, we know that $\pi$ factors though the natural projection to one of the simple factors of $G_A \times \prod_{\beta \in B}\mathbb{H}_{\beta}(k_{\beta})$. But $\pi(\L)=T(\L)$ is unbounded, so $\pi$ factors through the natural projection $p_{\alpha}:G_A \times \prod_{\beta \in B}\mathbb{H}_{\beta}(k_{\beta}) \rightarrow \mathbb{G}_{\alpha}(k_{\alpha})$ for some $\alpha \in A$. In particular, $\pi$ factors through the natural projection $p_A: G_A \times \prod_{\beta \in B}\mathbb{H}_{\beta}(k_{\beta}) \rightarrow G_A$. We thus have a continuous group homomorphism $G_A \rightarrow \mathbb{L}(k)$ that extends $T$.
 \end{proof}

 \section{Further Discussion} \label{Section: Further Discussion}
 \subsection{Quantitative information} 
 
 Suppose that $\Lambda$ is a $\star$-approximate lattice in the $\mathbb{R}$-points $G$ of a simple algebraic group as in Theorem \ref{First theorem}, and that $\Lambda$ is $l$-approximate for some integer $l$. Then, by Theorem \ref{First theorem}, there are a number field $K\subset \mathbb{R}$ and a $K$-group $\mathbb{H}$ such that $\mathbb{H}(\mathbb{R}) = G$  and $\Lambda$ is commensurable to $\mathbb{H}(\PVS(K))$.

 According to \cite[Lem. 2.3]{machado2019goodmodels}, the subset $\Lambda':=\Lambda^2 \cap \mathbb{H}(\mathcal{O}_K)$ is an $l^3$-approximate subgroup commensurable to $\mathbb{H}(\PVS(K))$. Let $v_0$ denote the place of $K$ coming from the inclusion $K \subset \mathbb{R}$. And write $\delta: \mathbb{H}(\mathcal{O}_K) \rightarrow \prod_{v} \mathbb{H}(K_v)$ the diagonal map where the product runs over all Archimedean places that are not equivalent to $v_0$ and such that $\mathbb{H}(K_v)$ is not compact. By weak approximation (e.g. \cite[II.6.8]{MR1090825}) we know that $\overline{\delta(\mathbb{H}(\PVS(K))}$ has non-empty interior, and, hence, $A:=\overline{\delta(\Lambda')}$ has non-empty interior too since finitely many left-translates of $A$ cover $\overline{\delta(\mathbb{H}(\PVS(K))}$. So $A$ is a compact $l^3$-approximate subgroup whose interior is not empty. 
 
 Let $H$ denote the Lie group $\prod_{v} \mathbb{H}(K_v)$, let $L$ be a maximal compact subgroup and choose a Haar measure $\mu$. Note that if $\Lambda$ is not commensurable to a lattice in $G$, then this product is not empty and $H$ has positive dimension. By \cite{jing2021nonabelian} we know that $\mu(A^2)^{1/\dim H/L} \geq 2 \mu(A)^{1/\dim H/L}$. But $A$ is an $l^3$-approximate subgroup, so we have $\mu(A^2) \leq l^3 \mu(A)$. Therefore, $\log l^3 \geq \dim H/L$. But we note that $\dim H \leq \left(\dim H/L\right) ^ 2$ (by the classification of simple Lie groups for instance). So $\log l^3 \geq \sqrt{\dim H}$ and $9\log^2 l \geq \dim H$.
 
 We note finally that $\dim H$ is bounded below by $$\dim \mathbb{G} \left(\vert \{ \text{Archimedean places $v$ of K s.t. } \mathbb{H}(K_v) \text{ non-compact }\}\vert -1 \right).$$
  So we have proved the first inequality. The second inequality then follows from \cite[I.3.2.1]{MR1090825} and \cite[Prop. 2.13.(iii)-(iv)]{bjorklund2016approximate}.

 \section{Acknowledgements}
 
 I am  indebted to my PhD supervisor, Emmanuel Breuillard, for invaluable discussions, support and encouragements. I am deeply grateful to the anonymous referee for comments and suggestions that helped improve this work immensely. This work was supported by the UK Engineering and Physical Sciences Research Council (EPSRC) grant EP/L016516/1 for the University of Cambridge Centre for Doctoral Training, the Cambridge Centre for Analysis. This material is based upon work supported by the National Science Foundation under Grant No. DMS-1926686.

% \bibliographystyle{plain}
% \bibliography{biblio.bib}

\begin{thebibliography}{10}

\bibitem{MR2308136}
Michael Baake, Daniel Lenz, and Robert~V. Moody.
\newblock Characterization of model sets by dynamical systems.
\newblock {\em Ergodic Theory Dynam. Systems}, 27(2):341--382, 2007.

\bibitem{MR2415834}
Bachir Bekka, Pierre de~la Harpe, and Alain Valette.
\newblock {\em Kazhdan's property ({T})}, volume~11 of {\em New Mathematical
  Monographs}.
\newblock Cambridge University Press, Cambridge, 2008.

\bibitem{MR3529116}
Yves Benoist and Nicolas de~Saxc\'{e}.
\newblock A spectral gap theorem in simple {L}ie groups.
\newblock {\em Invent. Math.}, 205(2):337--361, 2016.

\bibitem{bjorklund2017analytic}
Michael Bj{\"o}rklund and Tobias Hartnick.
\newblock Analytic properties of approximate lattices.
\newblock {\em Annales de l'Institut Fourier}, 70(5):1903-1950, 2020.

\bibitem{bjorklund2016approximate}
Michael Bj\"{o}rklund and Tobias Hartnick.
\newblock {Approximate lattices}.
\newblock {\em Duke Math. J.}, 167(15):2903--2964, 2018.

\bibitem{bjorklund2019borel}
Michael Bj\"{o}rklund, Tobias Hartnick, and Thierry Stulemeijer.
\newblock Borel density for approximate lattices.
\newblock {\em Forum Math. Sigma}, 7:e40, 27, 2019.

\bibitem{bjorklund2016aperiodic}
Michael Björklund, Tobias Hartnick, and Felix Pogorzelski.
\newblock Aperiodic order and spherical diffraction, i: auto-correlation of
  regular model sets.
\newblock {\em Proceedings of the London Mathematical Society},
  116(4):957--996, 2018.
%
%\bibitem{MR1145337}
%Jonathan Block and Shmuel Weinberger.
%\newblock Aperiodic tilings, positive scalar curvature and amenability of
%  spaces.
%\newblock {\em J. Amer. Math. Soc.}, 5(4):907--918, 1992.
\bibitem{BoTits}
Armand Borel and Jacques Tits.
\newblock Homomorphismes ”abstraits” de groupes algébriques simples.
\newblock {\em Ann. Math (2)} 97, 499-571, 1973.


\bibitem{MR2053599}
Jean ~Bourgain, Nets ~Katz, and Terence ~Tao.
\newblock A sum-product estimate in finite fields, and applications.
\newblock {\em Geom. Funct. Anal.}, 14(1):27--57, 2004.

\bibitem{MR2373146}
Emmanuel ~Breuillard and Tsachik ~Gelander.
\newblock A topological {T}its alternative.
\newblock {\em Ann. of Math. (2)}, 166(2):427--474, 2007.

\bibitem{MR3090256}
Emmanuel Breuillard, Ben Green, and Terence Tao.
\newblock {The structure of approximate groups}.
\newblock {\em Publ. Math. Inst. Hautes \'{E}tudes Sci.}, 116:115--221, 2012.

\bibitem{MR1744486}
Martin~R. Bridson and Andr\'{e} Haefliger.
\newblock {\em Metric spaces of non-positive curvature}, volume 319 of {\em
  Grundlehren der Mathematischen Wissenschaften [Fundamental Principles of
  Mathematical Sciences]}.
\newblock Springer-Verlag, Berlin, 1999.

\bibitem{MR3438951}
Pietro~Kreitlon Carolino.
\newblock {\em The {S}tructure of {L}ocally {C}ompact {A}pproximate {G}roups}.
\newblock ProQuest LLC, Ann Arbor, MI, 2015.
\newblock Thesis (Ph.D.)--University of California, Los Angeles.

\bibitem{deBruijn1981algebraicII}
Nicolaas~Govert de~Bruijn.
\newblock {Algebraic theory of Penrose's non-periodic tilings of the plane.
  II}.
\newblock In {\em Indagationes Mathematicae (Proceedings)}, volume~84, pages
  53--66. Elsevier, 1981.

\bibitem{deBruijn1981algebraicI}
Nicolaas~Govert de~Bruijn.
\newblock {Algebraic theory of Penrose’s non-periodic tilings of the plane}.
\newblock {\em Kon. Nederl. Akad. Wetensch. Proc. Ser. A}, 43(84):1--7, 1981.

\bibitem{MR2245534}
Yves de~Cornulier.
\newblock Relative {K}azhdan property.
\newblock {\em Ann. Sci. \'{E}cole Norm. Sup. (4)}, 39(2):301--333, 2006.

\bibitem{MR3361775}
Nicolas de~Saxc\'{e}.
\newblock A product theorem in simple {L}ie groups.
\newblock {\em Geom. Funct. Anal.}, 25(3):915--941, 2015.

\bibitem{MR3606726}
Nicolas de~Saxc\'{e}.
\newblock Borelian subgroups of simple {L}ie groups.
\newblock {\em Duke Math. J.}, 166(3):573--604, 2017.

\bibitem{MR820223}
Paul ~Erd\"{o}s and Endre ~Szemer\'{e}di.
\newblock On sums and products of integers.
\newblock In {\em Studies in pure mathematics}, pages 213--218. Birkh\"{a}user,
  Basel, 1983.

\bibitem{MR139135}
James ~M.~G. Fell.
\newblock A {H}ausdorff topology for the closed subsets of a locally compact
  non-{H}ausdorff space.
\newblock {\em Proc. Amer. Math. Soc.}, 13:472--476, 1962.

\bibitem{MR3915195}
Alexander Fish.
\newblock Extensions of {S}chreiber's theorem on discrete approximate subgroups
  in {$\Bbb R^d$}.
\newblock {\em J. \'{E}c. polytech. Math.}, 6:149--162, 2019.

\bibitem{MR2039990}
David Fisher and Grigory ~A. Margulis.
\newblock Local rigidity for cocycles.
\newblock In {\em Surveys in differential geometry, {V}ol. {VIII} ({B}oston,
  {MA}, 2002)}, volume~8 of {\em Surv. Differ. Geom.}, pages 191--234. Int.
  Press, Somerville, MA, 2003.

\bibitem{MR2415382}
Harald ~A. Helfgott.
\newblock Growth and generation in {${\rm SL}_2(\Bbb Z/p\Bbb Z)$}.
\newblock {\em Ann. of Math. (2)}, 167(2):601--623, 2008.

\bibitem{MR2833482}
Ehud Hrushovski.
\newblock {Stable group theory and approximate subgroups}.
\newblock {\em J. Amer. Math. Soc.}, 25(1):189--243, 2012.

\bibitem{Hrushovski2020quasimodels}
Ehud Hrushovski.
\newblock Beyond the {L}ascar group.
\newblock {\em arXiv preprint arXiv:2011.12009}, 2020.

\bibitem{jing2021nonabelian}
Yifan Jing, Chieu-Minh Tran, and Ruixiang Zhang.
\newblock A nonabelian {B}runn-{M}inkowski inequality, 2021.

\bibitem{MR1400744}
Jeffrey ~C. Lagarias.
\newblock Meyer's concept of quasicrystal and quasiregular sets.
\newblock {\em Comm. Math. Phys.}, 179(2):365--376, 1996.

\bibitem{MR1992666}
Jeffrey~C. Lagarias and Peter A.~B. Pleasants.
\newblock Repetitive {D}elone sets and quasicrystals.
\newblock {\em Ergodic Theory Dynam. Systems}, 23(3):831--867, 2003.

\bibitem{machado2019infinite}
Simon Machado.
\newblock Infinite approximate subgroups of soluble {L}ie groups.
\newblock {\em Mathematische Annalen}, 382: 285–301, 2022.

\bibitem{machado2020approximate}
Simon Machado.
\newblock {Approximate lattices and Meyer sets in nilpotent {L}ie groups}.
\newblock {\em Discrete Analysis}, 1:18 pp., 2020.

\bibitem{machado2019goodmodels}
Simon Machado.
\newblock Closed approximate subgroups: compactness, amenability and approximate lattices.
\newblock {\em arXiv preprint arXiv:2011.01829}, 2020.



\bibitem{MR44536}
George~W. Mackey.
\newblock Induced representations of locally compact groups. {I}.
\newblock {\em Ann. of Math. (2)}, 55:101--139, 1952.

\bibitem{MR1090825}
Grigory ~A. Margulis.
\newblock {\em Discrete subgroups of semisimple {L}ie groups}, volume~17 of
  {\em Ergebnisse der Mathematik und ihrer Grenzgebiete (3) [Results in
  Mathematics and Related Areas (3)]}.
\newblock Springer-Verlag, Berlin, 1991.

%\bibitem{MR1658579}
%Grigory~A. Margulis and Shahar ~Mozes.
%\newblock Aperiodic tilings of the hyperbolic plane by convex polygons.
%\newblock {\em Israel J. Math.}, 107:319--325, 1998.

\bibitem{meyer1972algebraic}
Yves Meyer.
\newblock {\em {Algebraic numbers and harmonic analysis}}, volume~2.
\newblock Elsevier, 1972.

\bibitem{moody1997meyer}
Robert~V. Moody.
\newblock {Meyer sets and their duals}.
\newblock {\em NATO ASI Series C Mathematical and Physical Sciences-Advanced
  Study Institute}, 489:403--442, 1997.

\bibitem{MR1452434}
Shahar Mozes.
\newblock Aperiodic tilings.
\newblock {\em Invent. Math.}, 128(3):603--611, 1997.

\bibitem{MR2406240}
Fr\'{e}d\'{e}ric Paulin.
\newblock De la g\'{e}om\'{e}trie et de la dynamique de {${\rm SL}_n(\Bbb R)$}
  et {${\rm SL}_n(\Bbb Z)$}.
\newblock In {\em Sur la dynamique des groupes de matrices et applications
  arithm\'{e}tiques}, pages 47--110. Ed. \'{E}c. Polytech., Palaiseau, 2007.

\bibitem{raghunathan1972discrete}
Madabusi~S. Raghunathan.
\newblock {Discrete subgroups of {L}ie groups}.
\newblock {\em Ergebnisse der Mathematik}, 68, 1972.

\bibitem{MR1798991}
Martin Schlottmann.
\newblock Generalized model sets and dynamical systems.
\newblock In {\em Directions in mathematical quasicrystals}, volume~13 of {\em
  CRM Monogr. Ser.}, pages 143--159. Amer. Math. Soc., Providence, RI, 2000.

\bibitem{schreiber1973approximations}
Jean-Pierre Schreiber.
\newblock {Approximations diophantiennes et problemes additifs dans les groupes
  ab{\'e}liens localement compacts}.
\newblock {\em Bull. Soc. Math. France}, 101:297--332, 1973.

\bibitem{MR1340198}
Marjorie Senechal.
\newblock {\em Quasicrystals and geometry}.
\newblock Cambridge University Press, Cambridge, 1995.

\bibitem{MR2253164}
Marjorie Senechal.
\newblock What is{$\ldots$}a quasicrystal?
\newblock {\em Notices Amer. Math. Soc.}, 53(8):886--887, 2006.

\bibitem{10.2307/1969027}
Carl~L. Siegel.
\newblock A mean value theorem in geometry of numbers.
\newblock {\em Annals of Mathematics}, 46(2):340--347, 1945.

\bibitem{MR2501249}
Terence Tao.
\newblock {Product set estimates for non-commutative groups}.
\newblock {\em Combinatorica}, 28(5):547--594, 2008.

\bibitem{MR3237440}
Terence Tao.
\newblock {\em Hilbert's fifth problem and related topics}, volume 153 of {\em
  Graduate Studies in Mathematics}.
\newblock American Mathematical Society, Providence, RI, 2014.

\bibitem{MR3309986}
Terence Tao.
\newblock {\em Expansion in finite simple groups of {L}ie type}, volume 164 of
  {\em Graduate Studies in Mathematics}.
\newblock American Mathematical Society, Providence, RI, 2015.

\bibitem{MR2289012}
Terence Tao and Van Vu.
\newblock {\em Additive combinatorics}, volume 105 of {\em Cambridge Studies in
  Advanced Mathematics}.
\newblock Cambridge University Press, Cambridge, 2006.

\bibitem{tointon_2019}
Matthew C.~H. Tointon.
\newblock {\em Introduction to {A}pproximate {G}roups}.
\newblock London Mathematical Society Student Texts. Cambridge University
  Press, 2019.

\bibitem{van2013approximate}
Lou van~den Dries.
\newblock Approximate groups.
\newblock {\em S{\'e}minaire BOURBAKI}, page 66{\`e}me, 2013.

\bibitem{MR0279206}
Ernest ~B. Vinberg.
\newblock Rings of definition of dense subgroups of semisimple linear groups.
\newblock {\em Izv. Akad. Nauk SSSR Ser. Mat.}, 35:45--55, 1971.

\bibitem{zimmer2013ergodic}
Robert~J. Zimmer.
\newblock {\em {Ergodic theory and semisimple groups}}, volume~81.
\newblock Springer Science \& Business Media, 2013.

\end{thebibliography}

\end{document}